\numberwithin{equation}{section}
\newtheorem{theorem}{Theorem}[section]
\newtheorem{lemma}[theorem]{Lemma}
\newtheorem{proposition}[theorem]{Proposition}
\newtheoremstyle{remarkstyle}
{}{}{}{}{\bfseries}{.}{ }{\thmname{#1}\thmnumber{ #2}\thmnote{ (#3)}}
\theoremstyle{remarkstyle}
\newtheorem{remark}{Remark}[section]
\newcommand{\N}{\mathbb N}
\newcommand{\Z}{\mathbb Z}
\newcommand{\R}{\mathbb R}
\newcommand{\C}{\mathbb C}
\newcommand{\Vc}{\mathcal V}
\newcommand{\Gc}{\mathcal G}
\newcommand{\Hc}{\mathcal H}
\newcommand{\Hs}{\mathscr H}
\newcommand{\Mcal}{\mathcal M}
\newcommand{\cdotb}{\boldsymbol{\cdot}}
\newcommand{\vareps}{\varepsilon}
\DeclareMathOperator*{\loc}{loc}
\DeclareMathOperator*{\rea}{Re}
\title[NLS with critical rotational speed]
{Existence and stability of standing waves for nonlinear Schr\"odinger equations with a critical rotational speed}
\author[V. D. Dinh]{Van Duong Dinh}
\address[V. D. Dinh]{Ecole Normale Sup\'erieure de Lyon \& CNRS, UMPA (UMR 5669), France
	and 
	Department of Mathematics, Ho Chi Minh City University of Education, 280 An Duong Vuong, Ho Chi Minh City, Vietnam}
\email{contact@duongdinh.com}
\subjclass[2010]{35A01; 35Q55}
\keywords{Nonlinear Schr\"odinger equation; Standing waves; Stability; Rotation}
\begin{document}
	
	\begin{abstract}
		We study the existence and stability of standing waves associated to the Cauchy problem for the nonlinear Schr\"odinger equation (NLS) with a critical rotational speed and an axially symmetric harmonic potential. This equation arises as an effective model describing the attractive Bose-Einstein condensation in a magnetic trap rotating with an angular velocity. By viewing the equation as NLS with a constant magnetic field and with (or without) a partial harmonic confinement, we establish the existence and orbital stability of prescribed mass standing waves for the equation with mass-subcritical, mass-critical, and mass-supercritical nonlinearities. Our result extends a recent work of [Bellazzini-Boussa\"id-Jeanjean-Visciglia,  Comm. Math. Phys. 353 (2017), no. 1, 229--251], where the existence and stability of standing waves for the supercritical NLS with a partial confinement were established.
	\end{abstract}
	
	\maketitle
	
	\section{Introduction}
	\label{S1}
	\setcounter{equation}{0}
	\subsection{Physical motivation}
	Bose-Einstein condensate (BEC) is a state of matter which describes a phenomenon that near absolute zero temperature all atoms lose their individual properties and condense into a macroscopic coherent super atom wave. This phenomenon was predicted by Bose and Einstein in 1925, but the experimental realization was not possible until 1995 by JILA \cite{AEMWC} and MIT \cite{DMADDKK} groups (2001 Nobel Prize in physics attributed to Cornell, Wieman, and Ketterle). Since 1995, the study of Bose-Einstein condensation has become one of the most active areas not only in physics but also in mathematics. An interesting application of Bose-Einstein condensates (BEC) is its application to the superfluidity and superconductivity. The key issue is to study the existence of quantized vortices which are well-known signatures of superfluidity (see e.g., \cite{Aftalion} for a broad introduction on these phenomena). Currently, the most popular way to generate quantized vortices from BEC is to impose a laser beam rotating with an angular velocity on the magnetic trap holding the atoms to create a harmonic anisotropic potential (see e.g., \cite{WH, MCWD}).
	
	In the mean-field approximation, the rotational trapped BEC is well described by the macroscopic wave function $\psi(t,x)$ whose evolution is governed by the Gross-Pitaevskii equation (GPE) with an angular momentum rotational term (see e.g., \cite{CD, FCS, GP, Fetter, BC}),
	namely
	\[
	i\hbar \partial_t \psi(t,x) = \left(-\frac{\hbar^2}{2m} \Delta + V(x) - \Omega L_z + M g|\psi(t,x)|^2 \right) \psi(t,x)
	\]
	where $t$ is the time variable, $x = (x_1, x_2, x_3) \in \R^3$ is the spatial coordinate vector,
	$\hbar$ is the Planck constant, $M$ is the number of atoms in the condensate, $m$ is the atomic mass, and $g=\frac{4\pi \hbar^2 a_s}{m}$ with the $s$-wave scattering length $a_s$ (positive for repulsive interactions and negative for attractive interactions). The external potential
	\[
	V(x) =
	\frac{m}{2} \sum_{j=1}^3 \gamma_j^2 x^2_j
	\]
	is the harmonic potential with trap frequencies $\gamma_j > 0$ for $j = 1,2,3$. The angular momentum
	operator
	\[
	L_z = i\hbar (x_2 \partial_{x_1} - x_1 \partial_{x_2})
	\]
	with a rotational speed $\Omega>0$. 
	\subsection{Mathematical framework}
	The mathematical study of the rotating GPE with repulsive interactions has been extensively studied in many works (see e.g., \cite{LS, Seiringer,IM, HHL-MMAS, HHL-JMP} and references therein). In this paper, we are interested in the rotating GPE with attractive interactions. For the mathematical analysis, it is convenient to consider the Cauchy problem for the following more general nonlinear Schr\"odinger equation (NLS) with rotation in the dimensionless form
	\begin{align} \label{RNLS}
	\left\{
	\renewcommand*{\arraystretch}{1.2}
	\begin{array}{rcl}
	i\partial_t u + \frac{1}{2}\Delta u &=& V u - \Omega L_z u- |u|^{p-1}u, \quad (t,x)\in \R\times \R^N, \\
	\left. u \right|_{t=0} &=& u_0,
	\end{array}
	\right.	
	\end{align}
	where $N\geq 2$, $p>1$, $V$ is a harmonic potential of the form
	\begin{align} \label{defi-poten-V}
	V(x) = \frac{1}{2} \sum_{j=1}^{N} \gamma_j^2 x_j^2, \quad \gamma_j>0, \quad j=1, \cdots, N
	\end{align}
	and 
	\[
	L_z:= i (x_2 \partial_{x_1} - x_1 \partial_{x_2}) 
	\]
	is the rotational operator, and $\Omega >0$ is the rotational speed. 
	
	There are two important physical quantities which are formally conserved by the time-evolution associated to \eqref{RNLS}:
	\begin{align*}
	M(u(t))&:= \|u(t)\|^2_{L^2} = M(u_0), \tag{Mass} \\
	E_\Omega(u(t)) &:= \frac{1}{2} \|\nabla u(t)\|^2_{L^2} + \Vc(u(t)) - L_\Omega(u(t)) - \frac{2}{p+1} \|u(t)\|^{p+1}_{L^{p+1}} = E_\Omega(u_0), \tag{Energy}
	\end{align*}
	where 
	\begin{align} \label{V-f}
	\Vc(f):= \int_{\R^N} V(x)|f(x)|^2 dx
	\end{align}
	is the potential energy and
	\begin{align} \label{L-gamma-f}
	L_\Omega(f):= \Omega \int_{\R^N} \overline{f}(x) L_z f(x) dx
	\end{align}
	is the angular momentum.
	
	Motivated by the fact that physicists are often interested in normalized standing waves for \eqref{RNLS}, we study prescribed mass standing waves for \eqref{RNLS}, i.e., solutions to \eqref{RNLS} of the form $u(t,x)=e^{i\omega t} \phi(x)$ having a given mass, where $\omega \in \R$ is a frequency and $\phi$ is a non-trivial solution to the time independent equation 
	\begin{align} \label{ell-equ-intro}
	-\frac{1}{2} \Delta \phi + V\phi - |\phi|^{p-1} \phi - \Omega L_z \phi + \omega \phi =0.
	\end{align} 
	To show the existence of prescribed mass standing waves, we look for critical points of the energy functional under a mass constraint. More precisely, we consider the minimizing problem: for $c>0$,
	\begin{align} \label{I-Omega-c}
	I_\Omega(c):= \inf \left\{ E_\Omega(f) \ : \ f \in S(c) \right\}, 
	\end{align}
	where $S(c):= \left\{ f \in X \ : \ M(f)=c\right\}$. Here $X$ is the functional space in which the energy is well-defined. 
	
	To our knowledge, there are several works devoted to the existence and stability of prescribed mass standing waves for \eqref{RNLS} with low rotational speed\footnote{Since we consider the rotation on the $(x_1,x_2)$-plane, the rotation speed should be compared only with the trapping frequencies in the $x_1$ and $x_2$ directions.}, i.e., $0<\Omega<\min\{\gamma_1,\gamma_2\}$. In this setting, the energy functional is well-defined for $f \in \Sigma$, where
	\begin{align} \label{Sigma}
	\Sigma:=\left\{ f \in H^1(\R^N) \ : \ |x| f \in L^2(\R^N)\right\},
	\end{align}
	hence $X\equiv \Sigma$. It was proved by Antonelli-Marahrens-Sparber \cite[Lemma 3.1]{AMS} that \eqref{RNLS} is locally well-posed in $\Sigma$. More precisely, for
	\[
	N\geq 2, \quad 1<p<1+\frac{4}{N-2}, \quad \Omega>0, \quad u_0 \in \Sigma,
	\]
	there exist $T_*, T^* \in (0, \infty]$ and a unique maximal solution $u \in C((-T_*,T^*), \Sigma)$ to \eqref{RNLS}. The maximal time of existence satisfies the blow-up alternative: if $T^*<\infty$ (resp. $T_*<\infty$), then
	\begin{align*} 
	\lim_{t\nearrow T^*} \|\nabla u(t)\|_{L^2} =\infty \quad \left(\text{resp. } \lim_{t\searrow -T_*} \|\nabla u(t)\|_{L^2} = \infty\right).
	\end{align*}
	Moreover, there are conservation laws of mass and energy, i.e., $M(u(t))=M(u_0)$ and $E_\Omega(u(t))=E_\Omega(u(t))$ for all $t\in (-T_*,T^*)$. In addition, the angular momentum $L_\Omega(u(t))$ is real-valued and satisfies
	\begin{align} \label{momen}
	L_\Omega(u(t)) + \Omega \int_0^t \int_{\R^N} i |u(s,x)|^2 L_z V(x) dx ds = L_\Omega(u_0)
	\end{align}
	for all $t\in (-T_*,T^*)$. 
	
	In \cite{ANS}, Arbunich-Nenciu-Sparber showed the existence and stability of prescribed mass standing waves for \eqref{RNLS} with mass-subcritical nonlinearity. In particular, they proved that for 
	\[
	0<\Omega<\min\{\gamma_1,\gamma_2\}, \quad 1<p<1+\frac{4}{N}, \quad c>0,
	\]
	there exists a minimizer for $I_\Omega(c)$. Moreover, the set of minimizers for $I_\Omega(c)$ denoted by
	\[
	\Mcal_\Omega(c):= \left\{ \phi \in S(c) \ : \ E_\Omega(\phi) = I_\Omega(c)\right\}
	\]
	is orbitally stable under the flow of \eqref{RNLS} in the sense that for any $\vareps>0$, there exists $\delta>0$ such that for any initial data $u_0 \in \Sigma$ satisfying 
	\[
	\inf_{\phi \in \Mcal_\Omega(c)} \|u_0-\phi\|_{\Sigma} <\delta,
	\]
	the corresponding solution to \eqref{RNLS} exists globally in time and satisfies
	\begin{align} \label{orbi-stab}
	\inf_{\phi \in \Mcal_\Omega(c)} \|u(t)-\phi\|_{\Sigma}<\vareps, \quad \forall t\in \R.
	\end{align}
	
	In the mass-critical case, i.e., $p=1+\frac{4}{N}$ and isotropic harmonic potential, i.e., $\gamma_1=\cdots=\gamma_N=\gamma$, the existence and stability of prescribed mass standing waves for \eqref{RNLS} were established in \cite{LL-JAMI, BHHZ}. More precisely, they proved that for
	\[
	0<\Omega<\gamma, \quad p=1+\frac{4}{N}, \quad 0<c<M(Q),
	\]
	where $Q$ is the unique positive radial solution to 
	\begin{align} \label{Q}
	-\frac{1}{2} \Delta Q + Q- |Q|^{\frac{4}{N}}Q=0,
	\end{align}
	then there exists a minimizer for $I_\Omega(c)$. In addition, the set of minimizers $\Mcal_\Omega(c)$ is orbitally stable under the flow of \eqref{RNLS} in the sense of \eqref{orbi-stab}. We also mention recent works \cite{LNR, GLY, Guo, GLP} for the limiting behavior of minimizers for $I_\Omega(c)$ when $c \nearrow M(Q)$. 
	
	In the mass-supercritical case, i.e., $1+\frac{4}{N}<p<1+\frac{4}{N-2}$, a standard scaling argument shows that the energy functional is no longer bounded from below on $S(c)$. Inspired by an idea of Bellazzini-Boussa\"id-Jeanjean-Visciglia \cite{BBJV}, recent works \cite{LY, AH} study the local minimization problem: for $c,m>0$,
	\begin{align} \label{I-Omega-c-m}
	I^m_\Omega(c):= \inf \left\{ E_\Omega(f) \ : f \in S(c) \cap B_\Omega(m)\right\},
	\end{align}
	where
	\[
	B_\Omega(m):= \left\{ f \in \Sigma \ : \ \|\nabla f\|^2_{L^2} + 2\Vc(f) - 2L_\Omega(f) \leq m \right\}.
	\]
	They proved that for 
	\[
	0<\Omega <\min\{\gamma_1,\gamma_2\}, \quad 1+\frac{4}{N}<p<1+\frac{4}{N-2}, \quad m>0,
	\]
	there exists $c_0=c_0(m)>0$ sufficiently small such that for all $0<c<c_0$, there exists a minimizer for $I_\Omega^m(c)$ and the set of minimizers for $I_\Omega^m(c)$ is orbitally stable under the flow of \eqref{RNLS} in the sense of \eqref{orbi-stab}.
	
	\begin{remark} \label{rem-low-spe}
		The results in \cite{ANS, LL-JAMI, BHHZ, LY, AH} were stated for $0<\Omega<\min_{1\leq j\leq N}\gamma_j$. However, after a careful look (see Lemma \ref{lem-equiv-norm}), we can prove the following equivalent norm 
		\begin{align} \label{equi-norm-intro}
		\|\nabla f\|^2_{L^2} + 2\Vc(f)  - 2L_\Omega(f) \simeq \|\nabla f\|^2_{L^2} + \|xf\|^2_{L^2}
		\end{align}
		for all $0<\Omega<\min\{\gamma_1,\gamma_2\}$. Thanks to this norm equivalence and the compact embedding 
		\begin{align} \label{comp-embe}
		\Hs^1 \hookrightarrow L^r, \quad \forall 2\leq r<\frac{2N}{N-2},
		\end{align}
		the above-mentioned results of \cite{ANS, LL-JAMI, BHHZ, LY, AH} actually holds for all $0<\Omega<\min\{\gamma_1,\gamma_2\}$. 
	\end{remark}

	In the case of high rotational speed, we are only aware of a non-existence of minimizers for $I_\Omega(c)$ due to Bao-Wang-Markowich \cite{BWM} and Cai \cite[Theorem 4.1]{Cai} (see also \cite[Theorem 5.2]{BC}). In particular, they proved that for
	\[
	\Omega > \min \{\gamma_1, \gamma_2\}, \quad 1<p<1+\frac{4}{N-2}, \quad c>0,
	\]
	there is no minimizer for $I_\Omega(c)$, i.e., $I_\Omega(c)=-\infty$. 
	
	\subsection{Main results}
	To the best of our knowledge, there is no result concerning the existence and stability of prescribed mass standing waves for \eqref{RNLS} with the critical rotational speed $\Omega =\min \{\gamma_1, \gamma_2\}$. In the present paper, we focus our attention to the case of axially symmetric harmonic potential, namely
	\begin{align} \label{axia-pote} 
	\gamma_1=\gamma_2=:\gamma
	\end{align} 
	and our main purpose is to study the existence and stability of prescribed mass standing waves for \eqref{RNLS} with the critical rotational speed $\Omega = \gamma$. 
	
	In the presence of critical rotational speed $\Omega =\gamma$, the main difficulty in proving the existence and stability of prescribed mass standing waves for \eqref{RNLS} comes from the fact that there is no equivalent norm between $\|\nabla f\|^2_{L^2} + 2 \Vc(f) - 2L_\gamma(f)$ and $\|\nabla f\|^2_{L^2} + \|xf\|^2_{L^2}$. Thus the compact embedding \eqref{comp-embe} does not help to show the existence of minimizers for $I_\gamma(c)$. To overcome the difficulty, we rewrite \eqref{RNLS} as
	\begin{align} \label{RNLS-mag}
	\left\{
	\renewcommand*{\arraystretch}{1.2}
	\begin{array}{rcl}
	i\partial_t u + \frac{1}{2}(\nabla-iA)^2 u &=& V_\gamma u - |u|^{p-1}u, \quad (t,x)\in \R\times \R^N, \\
	\left. u \right|_{t=0} &=& u_0,
	\end{array}
	\right.	
	\end{align}
	where 
	\begin{align} \label{defi-V-gamma}
	A(x)= \gamma(-x_2, x_1, 0, \cdots, 0), \quad V_\gamma(x) := \frac{1}{2} \sum_{j=3}^N \gamma_j^2 x_j^2.
	\end{align}
	When $N=2$ or $V_\gamma \equiv 0$, it is the magnetic Schr\"odinger equation with no external potential which has been studied in \cite{EL, CE, Ribeiro}. When $N\geq 3$, \eqref{RNLS-mag} can be viewed as NLS with a constant magnetic field and a partial harmonic confinement in $x_3, \cdots, x_N$ directions. Note that when $A \equiv 0$ and $N=3$, this equation has been studied recently in \cite{BBJV}. 
	
	Under this setting, the energy functional now becomes
	\[
	E_\gamma(u(t)) = \frac{1}{2} \|(\nabla -iA) u(t)\|^2_{L^2} + \int_{\R^N} V_\gamma(x) |u(t,x)|^2 dx - \frac{2}{p+1} \|u(t)\|^{p+1}_{L^{p+1}} =E_\gamma(u_0)
	\]
	and it is well-defined on $X=\Sigma_\gamma$, where
	\begin{align} \label{Sigma-gamma}
	\Sigma_\gamma:= \left\{ f \in H^1_A(\R^N) \ : \ \int_{\R^N} V_\gamma(x)|f(x)|^2 dx <\infty \right\}
	\end{align}
	is equipped with the norm
	\begin{align} \label{f-Sigma-gamma}
	\|f\|^2_{\Sigma_\gamma}:= \|(\nabla-iA)f\|^2_{L^2} + \int_{\R^N} V_\gamma(x)|f(x)|^2 dx + \|f\|^2_{L^2}. 
	\end{align}
	Here $H^1_A(\R^N)$ is the magnetic Sobolev space defined by
	\[
	H^1_A:= \left\{ f \in L^2(\R^N) \ : \ |(\nabla-iA)f| \in L^2(\R^N)\right\}.
	\]
	It was proved by Yajima \cite{Yajima} that the linear Schr\"odinger operator $e^{i t \Hc}$ with $\Hc:=\frac{1}{2} (\nabla-iA)^2 - V_\gamma$ can be expressed, for $|t|<\delta$ with some $\delta>0$, in terms of a Fourier integral operator of the form
		\[
		e^{it \Hc} f(x) = (2\pi it)^{-\frac{N}{2}} \int e^{iS(t,x,y)} a(t,x,y) f(y) dy,
		\]
		where $S$ and $a$ are $C^1$ in $(t,x,y)$ and $C^\infty$ in $(x,y)$, and $|\partial^\alpha_x \partial^\beta_y a(t,x,y)| \leq C_{\alpha\beta}$ for all multi-indices $\alpha, \beta$. From this, we obtain the dispersive estimate for $e^{it\Hc}$ for every $|t| <\delta$. Thanks to this dispersive estimate, the standard argument (see \cite[Chapter 9]{Cazenave}) yields the local well-posedness for \eqref{RNLS-mag} with initial data in $\Sigma_\gamma$.
	In particular, we have that for 
	\[
	N\geq 2, \quad 1<p<1+\frac{4}{N-2}, \quad u_0 \in \Sigma_\gamma,
	\]
	there exists a unique maximal solution $u \in C((-T_*,T^*), \Sigma_\gamma)$ to \eqref{RNLS-mag}. The local solution satisfies the conservation of mass and energy, i.e., $M(u(t))=M(u_0)$ and $E_\gamma(u(t))=E_\gamma(u_0)$ for all $t\in (-T_*, T^*)$. In addition, the maximal time of existence satisfies the blow-up alternative: if $T^*<\infty$ (resp. $T_*<\infty$), then
	\begin{align} \label{blow-alte-gamma}
	\lim_{t\nearrow T^*} \|u(t)\|_{\Sigma_\gamma} =\infty \quad \left(\text{resp. } \lim_{t\searrow-T_*} \|u(t)\|_{\Sigma_\gamma}=\infty \right).
	\end{align}
	
	\begin{remark}
		It was noticed in \cite[Remark 3.3]{AMS} that \eqref{RNLS} with $V$ satisfying \eqref{axia-pote} can be transformed back to NLS with no rotation. More precisely, by the change of variable
		\begin{align} \label{chan-vari}
		v(t,x) := u(t,x_1 \cos(\Omega t) + x_2 \sin(\Omega t), -x_1 \sin(\Omega t) + x_2 \cos(\Omega t), x_3, \cdots, x_N),
		\end{align}
		we see that $u$ solves \eqref{RNLS} if and only if $v$ solves
		\begin{align} \label{NLS}
		\left\{
		\renewcommand*{\arraystretch}{1.2}
		\begin{array}{rcl}
		i\partial_t v + \frac{1}{2}\Delta v &=& V v - |v|^{p-1}v, \quad (t,x)\in \R\times \R^N, \\
		\left. v \right|_{t=0} &=& u_0,
		\end{array}
		\right.	
		\end{align}
		where $V$ is as in \eqref{defi-poten-V}. The transformation \eqref{chan-vari} preserves the Lebesgue norms as well as the kinetic energy. Thus dynamics of solutions to \eqref{RNLS} with data in $\Sigma$ can be inferred from that of \eqref{NLS}. However, since $\Sigma$ is only a subspace of $\Sigma_\gamma$ (see Lemma \ref{lem-Sigma}) and \eqref{NLS} is not well-posed in $\Sigma_\gamma$, dynamics of solutions to \eqref{RNLS} with data in $\Sigma_\gamma$ do not simply follow from that of \eqref{NLS}.
	\end{remark}

	Our first result is the following existence and stability of prescribed mass standing waves for \eqref{RNLS-mag} in the mass-subcritical case.
	
	\begin{theorem}[Standing waves in the mass-subcritical regime] \label{theo-mass-sub}
		Let $N\geq 2$, $1<p<1+\frac{4}{N}$, and $V$ be as in \eqref{defi-poten-V} satisfying \eqref{axia-pote}. Assume that $\Omega=\gamma$. Then for any $c>0$, there exists $\phi \in \Sigma_\gamma$ such that $E_\gamma(\phi) = I_\gamma(c)$ and $M(\phi)=c$. In particular, $u(t,x)=e^{i\omega t} \phi(x)$ is a solution to \eqref{RNLS} with $\omega$ the corresponding Lagrange multiplier. Moreover, the set of minimizers for $I_\gamma(c)$ denoted by
		\[
		\Mcal_\gamma(c):= \left\{ \phi \in S(c) \ : \ E_\gamma(\phi) = I_\gamma(c)\right\}
		\]
		is orbitally stable under the flow of \eqref{RNLS} in the sense that for any $\varepsilon>0$, there exists $\delta>0$ such that for any initial data $u_0 \in \Hs^1$ satisfying
		\[
		\inf_{\phi\in \Gc_\gamma(c)} \|u_0 - \phi\|_{\Sigma_\gamma} <\delta,
		\]
		the corresponding solution to \eqref{RNLS} exists globally in time and satisfies
		\[
		\inf_{\phi \in \Gc_\gamma(c)} \inf_{y \in \Theta} \|e^{iA(y)\cdot \cdotb}u(t, \cdotb+y)-\phi\|_{\Sigma_\gamma} <\varepsilon, \quad \forall t \in \R,
		\]
		where $\Theta:= \R^2 \times \{0\}_{\R^{N-2}}$.
	\end{theorem}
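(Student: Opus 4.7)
The plan is to apply the direct method of the calculus of variations to $I_\gamma(c)$, combined with a concentration-compactness argument tailored to the magnetic setting and the partial harmonic confinement. I would first show that $I_\gamma(c)$ is finite and that every minimizing sequence is bounded in $\Sigma_\gamma$. This follows from the diamagnetic inequality $|\nabla|f|| \le |(\nabla-iA)f|$ combined with Gagliardo-Nirenberg applied to $|f|$:
\[
\|f\|_{L^{p+1}}^{p+1} \le C\,\|(\nabla-iA)f\|_{L^2}^{N(p-1)/2}\|f\|_{L^2}^{p+1-N(p-1)/2}.
\]
Since the hypothesis $p<1+4/N$ forces $N(p-1)/2<2$, Young's inequality gives
\[
E_\gamma(f) \ge \tfrac14\|(\nabla-iA)f\|_{L^2}^2 + \int_{\R^N}V_\gamma(x)|f(x)|^2\,dx - C(c)
\]
on $S(c)$, which yields $I_\gamma(c)>-\infty$ and boundedness of minimizing sequences in $\Sigma_\gamma$.

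The second ingredient is a compactness-up-to-translations lemma tied to the symmetry group of $E_\gamma$. The energy is invariant under the magnetic translations $f(x)\mapsto e^{iA(y)\cdot x}f(x-y)$ for $y\in\Theta=\R^2\times\{0\}_{\R^{N-2}}$, while the confining term $V_\gamma$ provides tightness in the complementary directions $(x_3,\ldots,x_N)$. A standard concentration-compactness dichotomy applied to a bounded sequence in $\Sigma_\gamma$ then yields: vanishing (so $\|f_n\|_{L^{p+1}}\to 0$), dichotomy (splitting into asymptotically decoupled pieces), or compactness after a translation by $y_n\in\Theta$ and extraction. To exclude vanishing, I would prove the strict inequality $I_\gamma(c)<\Lambda c$, where $\Lambda$ is the infimum of the spectrum of $-\tfrac12(\nabla-iA)^2+V_\gamma$ on $L^2(\R^N)$ — a sum of the lowest Landau level in the $(x_1,x_2)$ plane and the ground-state energy of the partial harmonic oscillator in $(x_3,\ldots,x_N)$. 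A product test function built from the corresponding eigenstates, suitably rescaled to mass $c$, produces a strictly negative nonlinear contribution. Strict subadditivity $I_\gamma(c)<I_\gamma(c_1)+I_\gamma(c-c_1)$ for $0<c_1<c$ rules out dichotomy and is obtained by a direct test-function comparison.

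With these ingredients, a minimizing sequence $\{f_n\}\subset S(c)$ admits, after magnetic translations $y_n\in\Theta$ and extraction, a limit $e^{-iA(y_n)\cdot x}f_n(\cdot+y_n)\to\phi$ strongly in $L^2\cap L^{p+1}$ and weakly in $\Sigma_\gamma$. Weak lower semicontinuity of the quadratic part of $E_\gamma$ then forces $\phi\in S(c)$ and $E_\gamma(\phi)=I_\gamma(c)$, and the Lagrange multiplier rule yields \eqref{ell-equ-intro} with some $\omega\in\R$. For the stability statement, the coercivity established above together with conservation of mass and energy gives a uniform $\Sigma_\gamma$-bound along the flow, ensuring global existence via the blow-up alternative \eqref{blow-alte-gamma}. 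Orbital stability then follows by the classical contradiction argument of Cazenave-Lions: were it to fail, one could extract a minimizing sequence of the form $\{u_n(t_n)\}$ that stays bounded away from $\Mcal_\gamma(c)$ modulo $\Theta$-translations, contradicting the compactness just proved.

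The main obstacle is the compactness step itself. At the critical rotational speed $\Omega=\gamma$, the norm equivalence of Remark~\ref{rem-low-spe} breaks down, so the usual compact Sobolev embedding is unavailable; the confinement $V_\gamma$ controls only the $(x_3,\ldots,x_N)$-directions while the magnetic kinetic form is invariant under the infinite-dimensional family of magnetic translations in $\Theta$. Following a bounded minimizing sequence through this family and showing that a single magnetic translation suffices to recover all the mass in the limit is the delicate point. Establishing the strict gap $I_\gamma(c)<\Lambda c$ and the strict subadditivity, in the absence of a clean scaling law $I_\gamma(c)=c^\alpha I_\gamma(1)$ (obstructed by the magnetic term and the potential), also requires an ad hoc construction rather than a soft argument.
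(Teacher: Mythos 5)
Your proposal is essentially correct, but it follows the classical concentration-compactness route (Esteban--Lions/Cazenave--Lions adapted to magnetic translations), whereas the paper deliberately avoids concentration-compactness, following the strategy of \cite{BBJV}. The shared core is identical: coercivity via the diamagnetic inequality and the magnetic Gagliardo--Nirenberg inequality, and --- crucially --- the strict spectral gap $I_\gamma(c)<\omega^0_\gamma c$ with $\omega^0_\gamma=\frac12\sum_j\gamma_j$ the bottom of the spectrum of $-\frac12(\nabla-iA)^2+V_\gamma$, proved by the same product test function (lowest Landau level in $(x_1,x_2)$ times the harmonic ground state in $(x_3,\dots,x_N)$); this correctly replaces the negativity-of-the-infimum argument of \cite{CE}, which fails here because $V_\gamma\ge 0$ may force $I_\gamma(c)\ge 0$. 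Where you differ: instead of invoking the full vanishing/dichotomy/compactness trichotomy, the paper (i) shows directly that every minimizing sequence has $\|f_n\|_{L^{p+1}}$ bounded below (contradiction against $I_\gamma(c)\ge\omega^0_\gamma c$), (ii) locates good translations $y_n\in\Theta$ by a localized Gagliardo--Nirenberg/pigeonhole argument on unit squares in the $(x_1,x_2)$-plane combined with the diamagnetic inequality, yielding a nonzero weak limit after magnetic translation, and (iii) replaces abstract strict subadditivity by a Brezis--Lieb decomposition together with the scaling identity $E_\gamma(\phi)=\lambda^{-2}E_\gamma(\lambda\phi)+\frac{2(\lambda^{p-1}-1)}{p+1}\|\phi\|_{L^{p+1}}^{p+1}$, which forces the weak limit to carry the full mass $c$. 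Your route buys generality (it is the standard machinery) at the cost of having to set up the magnetic concentration-compactness lemma and to prove strict subadditivity; the latter is the one under-specified point in your sketch --- a ``direct test-function comparison'' will not give strictness by itself, since you need the $L^{p+1}$-norms of near-minimizers at each sub-mass to be bounded away from zero, which again rests on the inequality $I_\gamma(c')<\omega^0_\gamma c'$ at every mass level. The stability argument (global existence from coercivity plus conservation laws, then the Cazenave--Lions contradiction using the compactness of minimizing sequences modulo $\Theta$-translations) coincides with the paper's.
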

	
	When $N=2$, i.e., $V_\gamma=0$, the existence of minimizers for $I_\gamma(c)$ was proved by Esteban-Lions \cite{EL} and the orbital stability of $\Mcal_\gamma(c)$ was showed by Cazenave-Esteban \cite{CE}. The proof of the existence result in \cite{EL} is based on a variant of the celebrated concentration-compactness principle adapted to the magnetic Sobolev space $H^1_A$. See also a recent paper \cite{CDH} for another usage of this magnetic concentration-compactness principle. In \cite{EL, CDH}, the exclusion of the vanishing scenario is based on the negativity of the minimization. When there is no external potential, this negativity can be achieved by using a suitable scaling argument. However, in our setting (especially when $N\geq 3$), this scaling argument does not work due to the presence of a partial harmonic confinement $V_\gamma$ and it may happen that $I_\gamma(c)$ is non-negative. Thus the argument given in \cite{CE} is not applicable to treat our problem. The proof of the existence part in Theorem \ref{theo-mass-sub} is based on an idea of \cite{BBJV} which does not use the concentration-compactness principle. More precisely, by making use of the diamagnetic inequality (see e.g., \cite{LL}), we shall prove in Lemma \ref{lem-weak-conv} a weak convergence result for bounded sequences in $\Sigma_\gamma$ having $L^{p+1}$-norm bounded away from zero. This allows us to rule out the vanishing possibility. To show the boundedness away from zero of the $L^{p+1}$-norm of every minimizing sequence for $I_\gamma(c)$, we proceed in two steps. First, we show (see Lemma \ref{lem-omega-gamma}) that 
	\[
	\omega^0_\gamma=\omega^0=\frac{1}{2} \sum_{j=1}^N \gamma_j,
	\] 
	where
	\begin{align} \label{omega-0}
	\omega^0 := \inf \left\{ \frac{1}{2} \|\nabla f\|^2_{L^2} + \Vc(f) \ : \ f \in \Sigma, M(f)=1 \right\}
	\end{align}
	and 
	\begin{align} \label{omega-gamma-0}
	\omega^0_\gamma:= \inf \left\{\frac{1}{2} \|(\nabla-iA) f\|^2_{L^2}+ \int_{\R^N} V_\gamma(x) |f(x)|^2 dx \ : \ f \in \Sigma_\gamma, M(f) =1 \right\}.
	\end{align}
	This is done by using an argument of \cite{BBJV} and an estimate of the magnetic Sobolev norm due to \cite{EL}. Second, we argue by contradiction that if there exists a minimizing sequence $(f_n)_n$ for $I_\gamma(c)$ satisfying $\lim_{n\rightarrow \infty} \|f_n\|_{L^{p+1}}=0$, then we must have $I_\gamma(c) \geq \omega^0_\gamma c$. This, however, is a contradiction thanks to a suitable choice of test function. Once the vanishing is excluded, an application of the Brezis-Lieb's lemma (see e.g., \cite{BL}) shows the existence of minimizers for $I_\gamma(c)$. The proof of the orbital stability part in Theorem \ref{theo-mass-sub} relies on the contradiction argument due to \cite{CE}. We refer the reader to Section \ref{S2} for more details. 
	
	Our next result concerns the existence and stability of prescribed mass standing waves for \eqref{RNLS-mag} in the mass-critical case.
	
	\begin{theorem} [Standing waves in the mass-critical case] \label{theo-mass-cri}
		Let $N\geq 2$, $p=1+\frac{4}{N}$, and $V$ be as in \eqref{defi-poten-V} satisfying \eqref{axia-pote}. Assume that $\Omega=\gamma$. Let $0<c<M(Q)$, where $Q$ is the unique positive radial solution to \eqref{Q}. Then there exists $\phi \in \Sigma_\gamma$ such that $E_\gamma(\phi) = I_\gamma(c)$ and $M(\phi)=c$. In particular, $u(t,x)=e^{i\omega t} \phi(x)$ is a solution to \eqref{RNLS} with $\omega$ the corresponding Lagrange multiplier. Moreover, the set of minimizers for $I_\gamma(c)$ is orbitally stable under the flow of \eqref{RNLS} in the sense  of Theorem \ref{theo-mass-sub}.
	\end{theorem}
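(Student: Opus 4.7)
The plan is to follow the same strategy as in the proof of Theorem \ref{theo-mass-sub}, with one essential modification concerning the coercivity of $E_\gamma$ on $S(c)$. In place of the automatic mass-subcritical lower bound, the hypothesis $c<M(Q)$ together with the sharp $L^2$-critical Gagliardo-Nirenberg inequality (whose sharp constant is attained at $Q$) applied to $|f|$ and the diamagnetic inequality $\|\nabla|f|\|_{L^2}\leq \|(\nabla-iA)f\|_{L^2}$ give, for every $f\in S(c)$,
\[
E_\gamma(f)\ \geq\ \frac{1}{2}\left(1-\left(\frac{c}{M(Q)}\right)^{2/N}\right)\|(\nabla-iA)f\|^2_{L^{2}} + \int_{\R^N} V_\gamma(x)|f(x)|^2\,dx\ \geq\ 0.
\]
Therefore $I_\gamma(c)\in[0,\infty)$, and this same a priori estimate ensures that every minimizing sequence $(f_n)_n$ for $I_\gamma(c)$ is bounded in $\Sigma_\gamma$.

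I would then establish the strict upper bound $I_\gamma(c)<\omega^0_\gamma c$ by the same test function as in the subcritical case: the Gaussian ground state $\psi$ of the quadratic form $\tfrac{1}{2}(\nabla-iA)^2+V_\gamma$, which realizes $\omega^0_\gamma$ by Lemma \ref{lem-omega-gamma}, normalized so that $M(\psi)=c$. Since $\|\psi\|_{L^{p+1}}>0$, one gets $E_\gamma(\psi)=\omega^0_\gamma c-\tfrac{2}{p+1}\|\psi\|^{p+1}_{L^{p+1}}<\omega^0_\gamma c$. If a minimizing sequence satisfied $\|f_n\|_{L^{p+1}}\to 0$, Lemma \ref{lem-omega-gamma} would force $E_\gamma(f_n)\geq \omega^0_\gamma c+o(1)$, contradicting the upper bound; hence $\liminf_n\|f_n\|_{L^{p+1}}>0$, and Lemma \ref{lem-weak-conv} produces, after extracting a subsequence and performing a magnetic translation by some $y_n\in\Theta$, a non-trivial weak limit $\phi\in\Sigma_\gamma\setminus\{0\}$.

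To conclude that $\phi$ is a minimizer I would argue exactly as in Theorem \ref{theo-mass-sub} via the Brezis-Lieb decomposition: writing the translated sequence as $f_n=\phi+g_n$ with $g_n\rightharpoonup 0$, the three quadratic terms of $E_\gamma$ split by Hilbert-space orthogonality and the $L^{p+1}$ term by the classical Brezis-Lieb lemma, giving
\[
M(f_n)=M(\phi)+M(g_n)+o(1),\qquad E_\gamma(f_n)=E_\gamma(\phi)+E_\gamma(g_n)+o(1).
\]
Setting $c_1:=M(\phi)\in(0,c]$, full mass on the weak limit, $c_1=c$, is obtained by combining: (i) $\liminf_n E_\gamma(g_n)\geq \omega^0_\gamma(c-c_1)$, which follows from $\|g_n\|_{L^{p+1}}\to 0$ (itself forced by a further application of Lemma \ref{lem-weak-conv} to rule out a second bubble in $g_n$) and Lemma \ref{lem-omega-gamma}; (ii) the sub-additivity $I_\gamma(c)\leq I_\gamma(c_1)+I_\gamma(c-c_1)$, proved by placing two near-minimizers far apart in the $(x_1,x_2)$-plane via magnetic translations in $\Theta$, which preserve every term of $E_\gamma$ because $V_\gamma$ is independent of $x_1,x_2$; and (iii) the strict inequality $I_\gamma(c-c_1)<\omega^0_\gamma(c-c_1)$, which holds since $c-c_1<M(Q)$ by the same Gaussian test function as above.

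The orbital stability of $\Mcal_\gamma(c)$ then follows verbatim from the Cazenave-Esteban contradiction argument \cite{CE} employed in Theorem \ref{theo-mass-sub}, using the compactness (modulo magnetic translations in $\Theta$) of minimizing sequences just obtained together with conservation of $M$ and $E_\gamma$ under the flow of \eqref{RNLS-mag}. The main obstacle throughout is the dichotomy step $c_1<c$; its exclusion relies on the magnetic sub-additivity, which exploits the translation invariance of $V_\gamma$ in the rotational plane and is precisely what forces the stability statement to be taken modulo magnetic translations by elements of $\Theta$.
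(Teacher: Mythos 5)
Your coercivity step (sharp Gagliardo--Nirenberg applied to $|f|$ together with the diamagnetic inequality, giving $E_\gamma\geq 0$ on $S(c)$ for $c<M(Q)$ and boundedness of minimizing sequences in $\Sigma_\gamma$) and your non-vanishing step (the Gaussian realizing $\omega^0_\gamma$ forces $I_\gamma(c)<\omega^0_\gamma c$, hence $\liminf_n\|f_n\|_{L^{p+1}}>0$) are exactly the two modifications the paper makes relative to the subcritical theorem, and they are correct.

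The gap is in your exclusion of dichotomy. You assert that the remainder $g_n$ satisfies $\|g_n\|_{L^{p+1}}\to0$, ``forced by a further application of Lemma \ref{lem-weak-conv} to rule out a second bubble.'' Lemma \ref{lem-weak-conv} cannot rule anything out: it only says that \emph{if} the $L^{p+1}$ norm stays bounded below, then a nontrivial weak limit exists after a magnetic translation. If a second bubble of mass $c_2>0$ does appear, your chain of inequalities only yields $I_\gamma(c)\geq I_\gamma(c_1)+I_\gamma(c_2)+\omega^0_\gamma(c-c_1-c_2)$, which is perfectly compatible with the non-strict subadditivity in your item (ii) and produces no contradiction; killing the second bubble requires \emph{strict} subadditivity, which is delicate here precisely because $V_\gamma\geq 0$ can make $I_\gamma(c)\geq 0$, so the usual dilation proof of strictness fails (the paper flags this obstruction in the introduction). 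The paper sidesteps the entire issue with the amplitude-scaling identity
\[
E_\gamma(f)=\frac{1}{\lambda^2}\,E_\gamma(\lambda f)+\frac{2(\lambda^{p-1}-1)}{p+1}\,\|f\|^{p+1}_{L^{p+1}},
\]
applied with $\lambda=\sqrt{c}/\|f\|_{L^2}>1$ to $\phi$ (strictly, since $\|\phi\|_{L^{p+1}}>0$) and to $g_n-\phi$ (non-strictly): this gives $E_\gamma(\phi)>\frac{c_1}{c}I_\gamma(c)$ and $\liminf_n E_\gamma(g_n-\phi)\geq\frac{c-c_1}{c}I_\gamma(c)$, hence $I_\gamma(c)>I_\gamma(c)$ whenever $c_1<c$ --- with no information needed on $\|g_n-\phi\|_{L^{p+1}}$ and no subadditivity at all. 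Replace your items (i)--(iii) by this scaling argument (it is Step 2 of the proof of Theorem \ref{theo-mass-sub} and carries over verbatim to $p=1+\frac{4}{N}$); the stability part is then fine as you describe it.
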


	We also have the following non-existence of minimizers for $I_\gamma(c)$. 
	
	\begin{proposition} \label{prop-non-exis}
		Let $N\geq 2$ and $V$ be as in \eqref{defi-poten-V} satisfying \eqref{axia-pote}. Assume that $\Omega=\gamma$. Then there is no minimizer for $I_\gamma(c)$ provided that one of the following conditions holds:
		\begin{itemize} [leftmargin=6mm]
			\item $p=1+\frac{4}{N}$ and $c\geq \|Q\|^2_{L^2}$, where $Q$ is the unique positive radial solution to \eqref{Q}. 
			\item $1+\frac{4}{N}<p<1+\frac{4}{N-2}$ and $c>0$.
		\end{itemize}
	\end{proposition}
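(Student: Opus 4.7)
The plan is to split into three subcases and show either that $I_\gamma(c) = -\infty$ (in which case non-existence is immediate) or, in the borderline case $p = 1 + \frac{4}{N}$ with $c = \|Q\|^2_{L^2}$, that $I_\gamma(c) = 0$ yet no function in $S(c)$ realizes this value.

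\textbf{Scaling-based subcases.} For $f_0 \in C_c^\infty(\R^N)$ with $\|f_0\|^2_{L^2} = c$, the rescaling $f_\lambda(x) := \lambda^{N/2} f_0(\lambda x)$ preserves the mass constraint and, since $A$ is homogeneous of degree $1$ and $V_\gamma$ of degree $2$, a direct computation gives
\[
E_\gamma(f_\lambda) = \frac{\lambda^2}{2}\|\nabla f_0\|^2_{L^2} + C_1(f_0) + C_2(f_0)\lambda^{-2} - \frac{2\lambda^{N(p-1)/2}}{p+1}\|f_0\|^{p+1}_{L^{p+1}},
\]
where $C_1(f_0) = -\int A \cdot \ima(\overline{f_0}\nabla f_0)\,dx$ and $C_2(f_0) = \tfrac{\gamma^2}{2}\int(x_1^2+x_2^2)|f_0|^2\,dx + \int V_\gamma|f_0|^2\,dx \geq 0$ are independent of $\lambda$. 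When $p > 1 + \frac{4}{N}$, the exponent $N(p-1)/2 > 2$ makes the nonlinear term dominate as $\lambda \to \infty$, so $E_\gamma(f_\lambda) \to -\infty$ and $I_\gamma(c) = -\infty$. When $p = 1 + \frac{4}{N}$ and $c > \|Q\|^2_{L^2}$, I take $f_0 := (c/\|Q\|^2_{L^2})^{1/2} Q$; since $Q$ is real, $C_1(f_0) = 0$, and using the Pohozaev-type identity $\frac{2}{p+1}\|Q\|^{p+1}_{L^{p+1}} = \frac{1}{2}\|\nabla Q\|^2_{L^2}$ (obtained by multiplying \eqref{Q} by $Q$ and combining with the Pohozaev identity) the $\lambda^2$ coefficient reduces to $\frac{c}{2\|Q\|^2_{L^2}}\|\nabla Q\|^2_{L^2}\bigl[1 - (c/\|Q\|^2_{L^2})^{2/N}\bigr]$, which is strictly negative. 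Hence $E_\gamma(f_\lambda) \to -\infty$ and $I_\gamma(c) = -\infty$.

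\textbf{Borderline case $c = \|Q\|^2_{L^2}$.} The choice $f_0 = Q$ now kills the $\lambda^2$ coefficient, giving $E_\gamma(f_\lambda) = O(\lambda^{-2}) \to 0$, so $I_\gamma(c) \leq 0$. For the matching lower bound, combine the diamagnetic inequality $\|\nabla |f|\|_{L^2} \leq \|(\nabla - iA)f\|_{L^2}$ with the sharp Gagliardo-Nirenberg inequality $\frac{2}{p+1}\|g\|^{p+1}_{L^{p+1}} \leq \frac{1}{2}(\|g\|^2_{L^2}/\|Q\|^2_{L^2})^{2/N}\|\nabla g\|^2_{L^2}$ applied to $g = |f|$, yielding
\[
E_\gamma(f) \geq \frac{1}{2}\Bigl[1 - (c/\|Q\|^2_{L^2})^{2/N}\Bigr]\|\nabla|f|\|^2_{L^2} + \int_{\R^N} V_\gamma|f|^2\,dx = \int_{\R^N} V_\gamma|f|^2\,dx \geq 0
\]
for every $f \in S(c)$, so $I_\gamma(c) = 0$. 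Suppose $\phi \in S(c)$ were a minimizer; chasing the equalities backwards forces simultaneously (i) $\int V_\gamma|\phi|^2\,dx = 0$, (ii) equality in the diamagnetic inequality, and (iii) equality in the sharp Gagliardo-Nirenberg inequality for $|\phi|$. For $N \geq 3$, (i) alone forces $|\phi| = 0$ on the full-measure set $\{(x_3,\ldots,x_N) \neq 0\}$, hence $\phi \equiv 0$, contradicting $M(\phi) = c > 0$. For $N = 2$, $V_\gamma \equiv 0$ and (i) is vacuous, but (iii) forces $|\phi|(x) = \mu^{N/2} Q(\mu(x - x_0))$ for some $\mu > 0$ and $x_0 \in \R^2$, so $|\phi| > 0$ pointwise; writing $\phi = |\phi| e^{i\chi}$, the identity $|(\nabla - iA)\phi|^2 = |\nabla|\phi||^2 + |\phi|^2|\nabla\chi - A|^2$ combined with (ii) yields $\nabla\chi = A$ a.e., contradicting $\operatorname{curl} A = 2\gamma \neq 0$ in $\R^2$.

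The hard part is the borderline mass $c = \|Q\|^2_{L^2}$, where $I_\gamma(c)$ is finite: the two $I_\gamma(c) = -\infty$ cases are soft scaling arguments, but at the critical mass one has to extract non-existence by a rigidity analysis based on simultaneous equality in the sharp Gagliardo-Nirenberg and diamagnetic inequalities, exploiting either the strict positivity of the partial harmonic confinement (when $N \geq 3$) or the non-zero curl of the magnetic potential $A$ (when $N = 2$).
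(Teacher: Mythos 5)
Your proof is correct and follows essentially the same strategy as the paper: a scaling argument gives $I_\gamma(c)=-\infty$ in the supercritical case and in the critical case with $c>\|Q\|^2_{L^2}$, and at the threshold $c=\|Q\|^2_{L^2}$ one shows $I_\gamma(c)=0$ and rules out attainment through the sharpness of the Gagliardo--Nirenberg inequality. The one genuine difference is in the threshold step: the paper observes that a minimizer would saturate the magnetic Gagliardo--Nirenberg inequality \eqref{mag-GN-ineq} and then simply invokes its non-attainability (Lemma \ref{lem-magn-prop}, proved in \cite{CDH}), whereas you unpack the rigidity yourself --- for $N\geq 3$ the single condition $\int V_\gamma|\phi|^2\,dx=0$ already kills $\phi$, a shortcut the paper does not exploit, and for $N=2$ your phase/curl argument ($\nabla\chi=A$ a.e.\ versus $\operatorname{curl}A=2\gamma\neq 0$) is precisely the content of the cited non-attainability proof, so your write-up is self-contained where the paper delegates. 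Two minor execution differences, both harmless: you scale $Q$ directly rather than using the paper's truncation $\lambda^{N/2}A_\lambda\varphi\,Q_0(\lambda\cdot)$, which is legitimate since $Q\in\Sigma\subset\Sigma_\gamma$; and your observation that the rotation cross-term is invariant under the $L^2$-scaling lets you avoid the paper's reduction to radial test functions (for which $L_\gamma(f_\lambda)=0$).
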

	
	\begin{remark}
		After finishing this manuscript, we learn that similar results as in Theorem \ref{theo-mass-cri} and Proposition \ref{prop-non-exis} were proved recently by Guo-Luo-Peng \cite{GLP-cri} in two dimensions. Their proof is based on the concentration-compactness principle in the same spirit of \cite{EL}. Here we give an alternative simple approach that not only avoids the concentration-compactness principle, but also is applicable for higher dimensions where there is a partial harmonic confinement. 
	\end{remark}

	In the mass-supercritical case, by Proposition \ref{prop-non-exis}, it is not possible to look for global minimizers for the energy functional over $S(c)$. As in \eqref{I-Omega-c-m}, we consider the following minimization problem
	\[
	I^m_\gamma(c):= \inf \left\{E_\gamma(f) \ : \ f \in S_\gamma(c) \cap B_\gamma(m)\right\},
	\]
	where
	\[
	B_\gamma(m):= \left\{ f\in \Sigma_\gamma \ :\ \|(\nabla-iA) f\|^2_{L^2} + 2 \int_{\R^N} V_\gamma(x)|f(x)|^2 dx \leq m \right\}.
	\]
	Our next result is the following existence and stability for prescribed mass standing waves for \eqref{RNLS-mag} in the mass-supercritical case.
	
	\begin{theorem} [Standing waves in the mass-supercritical case]  \label{theo-mass-sup}
		Let $N\geq 2$, $1+\frac{4}{N}<p<1+\frac{4}{N-2}$, and $V$ be as in \eqref{defi-poten-V} satisfying \eqref{axia-pote}. Assume that $\Omega=\gamma$. Then for every $m>0$, there exists $c_0=c_0(m)>0$ sufficiently small such that for all $0<c<c_0$:
		\begin{itemize} [leftmargin=6mm]
			\item[(1)] There exists a minimizer $\phi$ for $I^m_\gamma(c)$. Moreover, the set of minimizers for $I^m_\gamma(c)$ defined by
			\[
			\Mcal^m_\gamma(c):= \left\{ \phi \in S(c) \cap B_\gamma(m) \ : \ E_\gamma(\phi) = I^m_\gamma(c)\right\}
			\]
			satisfies
			\[
			\emptyset \ne \Mcal^m_\gamma(c) \subset B_\gamma(m/2).
			\]
			In particular, $u(t,x) = e^{i\omega t} \phi(x)$ is a solution to \eqref{RNLS} with $\omega$ the corresponding Lagrange multiplier satisfying
			\begin{align} \label{est-omega}
			-\omega^0_\gamma < \omega \leq -\omega^0_\gamma \left(1-B m^{\frac{N(p-1)-4}{4}} c^{\frac{4-(N-2)(p-1)}{4}} \right)
			\end{align}
			for some constant $B>0$ independent of $c$ and $m$. In addition, we have
			\begin{align} \label{est-sup}
			\sup_{\phi \in \Mcal^m_\gamma(c)} \|\phi\|^2_{\Sigma_\gamma} = O\left(c+ m^{\frac{N(p-1)}{4}}c^{\frac{4-(N-2)(p-1)}{4}}\right).
			\end{align}
			\item[(2)] The set $\Mcal^m_\gamma(c)$ is orbitally stable under the flow of \eqref{RNLS} in the sense of Theorem \ref{theo-mass-sub}.
		\end{itemize}
	\end{theorem}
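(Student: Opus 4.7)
The proof adapts the Bellazzini--Boussa\"id--Jeanjean--Visciglia local-minimization strategy from \cite{BBJV} to the magnetic/rotating setting, relying on Lemma \ref{lem-omega-gamma} and the magnetic weak-convergence lemma (Lemma \ref{lem-weak-conv}).

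\textit{Step 1 (strict interior).} I first show that, for $c$ small compared to $m$, any minimizer must lie in the open ball $B_\gamma(m/2)$, so the ball constraint is inactive. By the diamagnetic inequality $|\nabla|f|| \leq |(\nabla-iA)f|$ combined with the classical Gagliardo--Nirenberg, one has
\[
\|f\|_{L^{p+1}}^{p+1} \leq C\,\|(\nabla-iA)f\|_{L^2}^{N(p-1)/2}\,\|f\|_{L^2}^{(p+1)-N(p-1)/2}.
\]
On $B_\gamma(m) \setminus B_\gamma(m/2)$ this gives $E_\gamma(f) \geq m/4 - C\,m^{N(p-1)/4}\,c^{(4-(N-2)(p-1))/4}$, which stays close to $m/4$ once $c \ll m$. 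On the other hand, the test function $\sqrt c\,\psi_0$, where $\psi_0$ realizes $\omega^0_\gamma$ via Lemma \ref{lem-omega-gamma}, lies in $B_\gamma(m)$ provided $c \leq m/(2\omega^0_\gamma)$ and gives $I^m_\gamma(c) \leq \omega^0_\gamma c - C c^{(p+1)/2}$, which is much smaller than $m/4$ for small $c$. The strict comparison forces $\Mcal^m_\gamma(c) \subset B_\gamma(m/2)$, and a quantitative version of this comparison (together with the Gagliardo--Nirenberg bound) also yields the norm estimate \eqref{est-sup}.

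\textit{Step 2 (existence via compactness up to magnetic shifts).} For a minimizing sequence $(f_n) \subset S(c) \cap B_\gamma(m)$, the two constraints immediately provide $\Sigma_\gamma$-boundedness. I rule out vanishing by the same test-function comparison: if $\|f_n\|_{L^{p+1}} \to 0$, then $E_\gamma(f_n) \to \tfrac12\|(\nabla-iA)f_n\|_{L^2}^2 + \int V_\gamma|f_n|^2 \geq \omega^0_\gamma c$ by the variational characterization of $\omega^0_\gamma$, contradicting the upper bound on $I^m_\gamma(c)$ from Step~1. Invoking Lemma \ref{lem-weak-conv} yields shifts $y_n \in \Theta$ such that $\tilde f_n(x) := e^{iA(y_n)\cdot x}f_n(x+y_n) \rightharpoonup \phi \neq 0$ weakly in $\Sigma_\gamma$. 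Magnetic shifts in $\Theta$ preserve each term of $E_\gamma$, the mass, and $B_\gamma(m)$-membership, so $(\tilde f_n)$ is again minimizing. A Brezis--Lieb decomposition of $\|\cdot\|_{L^2}^2$ and $\|\cdot\|_{L^{p+1}}^{p+1}$, combined with weak lower semicontinuity of the quadratic part of $E_\gamma$, rules out the dichotomy $0 < \|\phi\|_{L^2}^2 < c$ (otherwise a rescaling of $\phi$ back to mass $c$ contradicts the upper bound on $I^m_\gamma(c)$). Hence $\tilde f_n \to \phi$ strongly in $L^2$, then in $L^{p+1}$ by interpolation with the $\Sigma_\gamma$-bound, and finally in $\Sigma_\gamma$, so $\phi$ is a minimizer.

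\textit{Step 3 (Lagrange multiplier and orbital stability).} Since $\phi \in B_\gamma(m/2)$, the ball constraint is inactive and the Euler--Lagrange equation $-\tfrac12(\nabla-iA)^2\phi + V_\gamma\phi - |\phi|^{p-1}\phi + \omega\phi = 0$ holds; pairing with $\bar\phi$ gives $\omega c = \tfrac{p-1}{p+1}\|\phi\|_{L^{p+1}}^{p+1} - I^m_\gamma(c)$. Inserting the two-sided bounds on $I^m_\gamma(c)$ from Step~1 and the magnetic Gagliardo--Nirenberg bound on $\|\phi\|_{L^{p+1}}^{p+1}$ (using $\phi \in B_\gamma(m/2)$) yields \eqref{est-omega}. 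For orbital stability I use the Cazenave--Lions contradiction scheme adapted to the magnetic shifts: suppose stability fails, so that there exist $\varepsilon_0 > 0$, initial data $u_0^n \to \phi_* \in \Mcal^m_\gamma(c)$ in $\Sigma_\gamma$, and times $t_n$ at which $u_n(t_n)$ is $\varepsilon_0$-far from $\Mcal^m_\gamma(c)$ modulo magnetic shifts in $\Theta$. Setting $\tau_n := \inf\{t \geq 0 : u_n(t) \notin B_\gamma(m)\}$, the strict inclusion $\Mcal^m_\gamma(c) \subset B_\gamma(m/2)$ and $\Sigma_\gamma$-continuity at $t=0$ force $\tau_n > 0$. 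By conservation of mass and energy, $(u_n(t))_{t<\tau_n}$ is a minimizing sequence for $I^m_\gamma(c)$, so Step~2 makes it converge (up to magnetic shifts) to $\Mcal^m_\gamma(c) \subset B_\gamma(m/2)$; this precludes $\tau_n < \infty$ and contradicts the existence of the bad times $t_n$. Global existence follows from \eqref{blow-alte-gamma} since $u(t) \in B_\gamma(m)$ controls $\|u(t)\|_{\Sigma_\gamma}$.

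The main obstacle is the dichotomy-exclusion in Step~2: the joint role of the magnetic invariance in the $(x_1,x_2)$-plane (absorbed via Lemma \ref{lem-weak-conv}) and the partial harmonic compactness from $V_\gamma$ in the remaining directions must be carefully reconciled so that the weak limit $\phi$ carries the full mass $c$; the local nature of the minimization problem in $B_\gamma(m)$ then makes the ball constraint inactive at $\phi$, which is also what propagates the trap through the nonlinear flow and yields stability.
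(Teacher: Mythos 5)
Your proposal is correct and follows essentially the same route as the paper: the Gaussian-type test function realizing $\omega^0_\gamma$ gives $I^m_\gamma(c)<\omega^0_\gamma c$, which both rules out vanishing and, compared against the lower bound of order $m/4$ for $E_\gamma$ on the annulus $B_\gamma(m)\setminus B_\gamma(m/2)$, forces minimizers into $B_\gamma(m/2)$; existence then follows from Lemma \ref{lem-weak-conv} plus Brezis--Lieb, and stability from the Cazenave--Lions contradiction scheme with the trapping argument, exactly as in the paper. The only cosmetic differences are that the paper isolates the annulus comparison as \eqref{est-inf} (measuring against the infimum over $B_\gamma(m/4)$ rather than against $\omega^0_\gamma c$ directly) and establishes global existence as a separate bootstrap lemma (Lemma \ref{lem-gwp-mass-sup}) instead of folding it into the exit-time argument.
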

	
	\begin{remark}
		When $A \equiv 0$, the existence and stability of prescribed mass standing waves for \eqref{RNLS-mag} were studied in \cite{BBJV} with $N=3$ and $\frac{7}{3}<p<5$. Theorem \ref{theo-mass-sup} extends the result in \cite{BBJV} to the case of a constant magnetic field and a partial harmonic confinement.		
	\end{remark}
	
	The proof of Theorem \ref{theo-mass-sup} is based on an argument of \cite{BBJV}. We first show that for each $m>0$ fixed, there exists $c_0=c_0(m)>0$ sufficiently small such that for all $0<c<c_0$, $S(c) \cap B_\gamma(m) \ne \emptyset$, so $I^m_\gamma(c)>-\infty$. Using this, we see that any minimizing sequence for $I^m_\gamma(c)$ is bounded uniformly in $\Sigma_\gamma$. By the same argument mentioned above, we shall prove that any minimizing sequence for $I^m_\gamma(c)$ has $L^{p+1}$-norm bounded away from zero. This together with the Brezis-Lieb's lemma yield the existence of a minimizer for $I^m_\gamma(c)$. To see that this minimizer is indeed a solution to \eqref{ell-equ-intro} with $\omega$ the corresponding Lagrange multiplier, it suffices to show that this minimizer does not belong to the boundary of $B_\gamma(m)$. This is done by proving that for $0<c<c_0$,
	\begin{align} \label{est-inf-intro}
	\inf \left\{E_\gamma(f) \ : \ f \in S(c) \cap B_\gamma(m/4)\right\} < \inf \left\{ E_\gamma(f) \ : \ f \in S(c) \cap \left( B_\gamma(m) \backslash B_\gamma(m/2)\right)\right\}.
	\end{align}
	The estimate \eqref{est-inf-intro} is different to the one used in\cite{BBJV}, namely
	\begin{align} \label{est-inf-BBJV}
	\inf \left\{E_\gamma(f) \ : \ f \in S(c) \cap B_\gamma(cm/2)\right\} < \inf \left\{ E_\gamma(f) \ : \ f \in S(c) \cap \left( B_\gamma(m) \backslash B_\gamma(cm)\right)\right\}.
	\end{align} 
	Here the notations have been changed to be consistent with ours. If we use \eqref{est-inf-BBJV}, then for $f \in S(c) \cap B_\gamma(cm/2)$, we have from \eqref{omega-gamma-0} that
	\[
	c= M(f) \leq \frac{1}{2\omega^0_\gamma} \left(\|(\nabla-iA) f\|^2_{L^2} + 2\int_{\R^N} V_\gamma(x)|f(x)|^2 dx \right) \leq \frac{cm}{4\omega^0_\gamma} \quad \text{or} \quad m \geq 4\omega^0_\gamma
	\]
	hence the argument in \cite{BBJV} does not apply to all $m>0$. 
	
	We also remark that the orbital stability given in Theorem \ref{theo-mass-sup} requires the solutions to \eqref{RNLS-mag} exist globally in time for $c>0$ sufficiently small. This result was not showed in \cite{BBJV}. We shall prove this result in Lemma \ref{lem-gwp-mass-sup} and the proof is based on a standard continuity argument. 
	
	Our final result shows that for each $m>0$ fixed and $c>0$ sufficiently small, minimizers for $I^m_\gamma(c)$ are prescribed mass ground states for \eqref{ell-equ-intro}.
	
	\begin{theorem} \label{theo-mass-sup-prop}
		Let $N\geq 2$, $1+\frac{4}{N}<p<1+\frac{4}{N-2}$, $V$ be as in \eqref{defi-poten-V} satisfying \eqref{axia-pote}, and $\Omega =\gamma$. Let $m>0$ be a fixed constant, $c>0$ sufficiently small, and $\phi \in \Mcal^m_\gamma(c)$. Then $\phi$ is a prescribed mass ground state related to \eqref{ell-equ-intro}, i.e.,
		\[
		\left.E_\gamma'\right|_{S(c)}(\phi)=0, \quad E_\gamma(\phi) = \inf \left\{ E_\gamma(f) \ : \ f \in S(c) \left. E'_\gamma \right|_{S(c)} (f) =0 \right\}.
		\]
	\end{theorem}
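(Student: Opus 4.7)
\medskip

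My plan is to show that for every critical point $\psi \in S(c)$ of $E_\gamma|_{S(c)}$ one has $E_\gamma(\psi) \geq I^m_\gamma(c) = E_\gamma(\phi)$, which by the definition of a prescribed mass ground state is exactly what is needed. The argument rests on a natural dichotomy: \emph{either} $\psi \in B_\gamma(m)$, in which case $\psi \in S(c) \cap B_\gamma(m)$ and the inequality is immediate from the definition of $I^m_\gamma(c)$ as an infimum, \emph{or} $\psi \notin B_\gamma(m)$, in which case the lower bound must be extracted from the structure of the Euler--Lagrange equation together with a Pohozaev-type identity. So the bulk of the work lies in handling the second alternative.

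To derive the required relations, let $\omega_\psi$ be the Lagrange multiplier associated with $\psi$. Testing \eqref{ell-equ-intro} against $\bar\psi$ yields the Nehari-type identity
\[
\|(\nabla-iA)\psi\|^2_{L^2} + 2\int_{\R^N}V_\gamma(x)|\psi(x)|^2\,dx + 2\omega_\psi c = 2\|\psi\|^{p+1}_{L^{p+1}}.
\]
For the Pohozaev part I would apply the $L^2$-preserving dilation $\psi_t(x):= t^{N/2}\psi(tx)$, which lies in $\Sigma_\gamma \cap S(c)$ for every $t>0$, so the constrained criticality of $\psi$ gives $\frac{d}{dt}\big|_{t=1}E_\gamma(\psi_t) = 0$. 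Decomposing the magnetic kinetic energy as $\|(\nabla-iA)\psi\|^2_{L^2} = \|\nabla\psi\|^2_{L^2} + \||A|\psi\|^2_{L^2} + 2\int\mathrm{Im}(A\bar\psi\cdot\nabla\psi)\,dx$ and noting the inverse scaling $A(tx) = tA(x)$, the computation produces the Pohozaev identity
\[
\|\nabla\psi\|^2_{L^2} - \||A|\psi\|^2_{L^2} - 2\int_{\R^N}V_\gamma|\psi|^2\,dx = \frac{N(p-1)}{p+1}\|\psi\|^{p+1}_{L^{p+1}}.
\]

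Eliminating $\|\psi\|^{p+1}_{L^{p+1}}$ between these two identities expresses $E_\gamma(\psi)$ as a linear combination of $\|\nabla\psi\|^2_{L^2}$, $\||A|\psi\|^2_{L^2}$, the cross term, and $\int V_\gamma|\psi|^2$. I would then absorb the cross term via Cauchy--Schwarz and control $\|\psi\|^{p+1}_{L^{p+1}}$ from above by the sharp Gagliardo--Nirenberg inequality applied through the diamagnetic inequality $|\nabla|\psi|| \leq |(\nabla-iA)\psi|$, obtaining a bound of the form $\|\psi\|^{p+1}_{L^{p+1}} \leq C\|(\nabla-iA)\psi\|^{N(p-1)/2}_{L^2}c^{(p+1)/2 - N(p-1)/4}$. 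Combined with the upper bounds \eqref{est-omega} and \eqref{est-sup} on $E_\gamma(\phi)$ supplied by Theorem \ref{theo-mass-sup}, the assumption $\psi \notin B_\gamma(m)$ then forces $E_\gamma(\psi) \geq I^m_\gamma(c)$ for $c$ sufficiently small.

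The main obstacle is precisely the Pohozaev step: because $A$ scales inhomogeneously under $x\mapsto tx$, the magnetic kinetic energy inevitably splits into the gauge-noninvariant pieces $\|\nabla\psi\|^2_{L^2}$ and $\||A|\psi\|^2_{L^2}$ and the cross term $\int\mathrm{Im}(A\bar\psi\cdot\nabla\psi)\,dx$ is $t$-invariant, so that the resulting identity is not directly stated in terms of the gauge-invariant quantity $\|(\nabla-iA)\psi\|^2_{L^2}$. Recombining these pieces with enough sharpness to beat the smallness of $E_\gamma(\phi) = O(c)$ in the final comparison is the delicate part of the argument.
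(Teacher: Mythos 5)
Your strategy is genuinely different from the paper's, and the second branch of your dichotomy does not close as written. The paper never needs an unconditional energy lower bound for critical points outside $B_\gamma(m)$: it argues by contradiction that a competitor $f\in S(c)$ with $E'_\gamma|_{S(c)}(f)=0$ and $E_\gamma(f)<I^m_\gamma(c)$ must in fact lie \emph{inside} $B_\gamma(m)$. Indeed, the Nehari identity $\frac12 H_\gamma(f)+\omega M(f)=\|f\|^{p+1}_{L^{p+1}}$, rewritten as $E_\gamma(f)+\omega c=\frac{p-1}{p+1}\|f\|^{p+1}_{L^{p+1}}$, together with the strict bound $I^m_\gamma(c)<\omega^0_\gamma c$ of Lemma \ref{lem-non-vani-sup} and the multiplier estimate \eqref{est-omega}, forces $H_\gamma(f)\to 0$ as $c\to 0$; hence $f\in S(c)\cap B_\gamma(m)$ and $E_\gamma(f)\geq I^m_\gamma(c)$, a contradiction. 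No Pohozaev identity is used at all.

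The concrete gap in your route is the step ``absorb the cross term via Cauchy--Schwarz.'' Write $K=\|\nabla\psi\|^2_{L^2}$, $P=\||A|\psi\|^2_{L^2}$, $W=2\int_{\R^N} V_\gamma|\psi|^2\,dx$, and let $X$ denote the cross term, so that $H_\gamma(\psi)=K+X+P+W$. Eliminating $\|\psi\|^{p+1}_{L^{p+1}}$ between your Nehari and Pohozaev identities gives, with $q:=N(p-1)$,
\[
E_\gamma(\psi)=\Big(\tfrac12-\tfrac{2}{q}\Big)K+\tfrac12 X+\Big(\tfrac12+\tfrac{2}{q}\Big)(P+W).
\]
Since $|X|\leq 2\sqrt{KP}$ and $4\big(\tfrac12-\tfrac2q\big)\big(\tfrac12+\tfrac2q\big)=1-\tfrac{16}{q^2}<1$, the quadratic form $\big(\tfrac12-\tfrac2q\big)K-\sqrt{KP}+\big(\tfrac12+\tfrac2q\big)P$ is indefinite for every $q>4$, i.e., throughout the mass-supercritical range. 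So Cauchy--Schwarz cannot even recover nonnegativity, let alone the bound $E_\gamma(\psi)\geq I^m_\gamma(c)$, which is of size $\omega^0_\gamma c$. This is precisely the loss of gauge-invariant information that destroys the norm equivalence of Lemma \ref{lem-equiv-norm} at the critical speed $\Omega=\gamma$: near-equality in Cauchy--Schwarz is attained by lowest-Landau-level-type functions, for which $K$ and $P$ are both huge while $K+X+P$ stays of order $c$. In addition, your derivation of the Pohozaev identity by differentiating $t\mapsto E_\gamma(\psi_t)$ at $t=1$ requires justifying that $x\cdot\nabla\psi+\frac N2\psi$ is an admissible variation (regularity and decay of $\psi$), which is not automatic and is not addressed. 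The theorem is reached much more directly by the paper's Nehari-only contradiction argument.
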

	
	We end this paragraph by giving a remark on the rotational NLS with a general (non-axially symmetric) harmonic potential. 
	
	\begin{remark}
		The axially symmetric condition \eqref{axia-pote} plays an essential role in our analysis. If we consider the general (non-axially symmetric) harmonic potential, i.e., $\gamma_1 \ne \gamma_2$, then the arguments presented in the sequel do not work due to the lack of the gauge invariance in the first two variables. Thus the existence and stability of prescribed mass standing waves for \eqref{RNLS} with 
		\[
		\Omega=\min\{\gamma_1, \gamma_2\}
		\]
		are still open problems.
	\end{remark}

	\subsection{Outline of the paper}
	This paper is organized as follows. In Section \ref{S2}, we give some preliminary results which are needed in the sequel. In Section \ref{S3}, we show the existence, non-existence, and stability of prescribed mass standing waves for \eqref{RNLS} with mass-subcritical and mass-critical nonlinearities. Finally, we study the existence, stability, and qualitative properties of prescribed mass standing waves for \eqref{RNLS} with the mass-supercritical nonlinearity in Section \ref{S4}.

	\section{Preliminaries}
	\label{S2}
	\setcounter{equation}{0}
	
	In this section, we recall and prove some preliminary results which are useful in our analysis. Let us start with the following equivalent norm in the case of low rotational speed mentioned in Remark \ref{rem-low-spe}.
	
	\begin{lemma} \label{lem-equiv-norm}
		Let $V$ be as in \eqref{defi-poten-V} satisfying \eqref{axia-pote}. If $0<\Omega <\gamma$, then 
		\begin{align} \label{equi-norm}
		\|\nabla f\|^2_{L^2} + 2\Vc(f)  - 2L_\Omega(f) \simeq \|\nabla f\|^2_{L^2} + \|xf\|^2_{L^2}
		\end{align} 
		for any $f\in \Sigma$.
	\end{lemma}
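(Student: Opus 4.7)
The plan is to reorganize the quadratic form as a magnetic kinetic energy plus a manifestly positive remainder, using the rotational gauge
\[
A(x) := \Omega(-x_2, x_1, 0, \dots, 0)
\]
naturally attached to $L_z$.

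The key step is the algebraic identity
\[
\|(\nabla - iA) f\|_{L^2}^2 = \|\nabla f\|_{L^2}^2 - 2 L_\Omega(f) + \Omega^2 \|(x_1, x_2) f\|_{L^2}^2,
\]
which I would derive by expanding $|(\nabla - iA) f|^2$ pointwise: the cross term assembles into $-2 A \cdot \ima(\bar f \nabla f)$, whose integral is exactly $-2 L_\Omega(f)$ by the definition of $L_z$ (indeed, since $L_z$ is self-adjoint, a short integration by parts gives $L_\Omega(f) = \int A \cdot \ima(\bar f \nabla f)$). Using the axial symmetry \eqref{axia-pote} to split $2\Vc(f) = \gamma^2 \|(x_1, x_2) f\|_{L^2}^2 + \sum_{j=3}^N \gamma_j^2 \|x_j f\|_{L^2}^2$, this yields the decomposition
\[
\|\nabla f\|_{L^2}^2 + 2\Vc(f) - 2 L_\Omega(f) = \|(\nabla - iA) f\|_{L^2}^2 + (\gamma^2 - \Omega^2) \|(x_1, x_2) f\|_{L^2}^2 + \sum_{j=3}^N \gamma_j^2 \|x_j f\|_{L^2}^2.
\]

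For the lower bound, since $\Omega < \gamma$ all three terms on the right are non-negative, and the last two already control $\|xf\|_{L^2}^2$ up to the positive constant $\min(\gamma^2 - \Omega^2, \gamma_3^2, \dots, \gamma_N^2)$. For the $\|\nabla f\|_{L^2}^2$ part, the triangle inequality $\|\nabla f\|_{L^2} \leq \|(\nabla - iA) f\|_{L^2} + \Omega \|(x_1, x_2) f\|_{L^2}$, together with the two controls just obtained, finishes the job. The upper bound is routine: $2\Vc(f) \leq (\max_j \gamma_j^2) \|xf\|_{L^2}^2$, and from $|L_z f| \leq |x| |\nabla f|$ pointwise combined with Cauchy-Schwarz and AM-GM one gets $|L_\Omega(f)| \leq \tfrac{\Omega}{2}(\|\nabla f\|_{L^2}^2 + \|xf\|_{L^2}^2)$.

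There is no real obstacle once one identifies the correct gauge $A$ so that the rotation term $-2 L_\Omega(f)$ becomes the cross term of a magnetic Laplacian; after that, the strict inequality $\Omega < \gamma$ produces a positive coefficient in front of $\|(x_1, x_2) f\|_{L^2}^2$ and the equivalence is immediate. The argument manifestly fails at $\Omega = \gamma$, which is precisely the phenomenon motivating the rest of the paper.
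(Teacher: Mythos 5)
Your proof is correct, but it takes a genuinely different route from the paper's. The paper argues by brute force: it bounds $|L_\Omega(f)| \leq \delta \|\nabla f\|^2_{L^2} + \frac{\Omega^2}{4\delta}\int (x_1^2+x_2^2)|f|^2\,dx$ via H\"older, Cauchy--Schwarz, and Young, then takes $\delta=\tfrac12$ for the upper bound and optimizes $\delta$ (choosing $\delta = \tfrac12 - \tfrac{\gamma^2-\Omega^2}{2(\gamma^2+\Omega^2)}$) so that both the gradient coefficient $1-2\delta$ and the weight coefficient $\gamma^2 - \tfrac{\Omega^2}{2\delta}$ stay positive for the lower bound. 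You instead complete the square: your identity $\|(\nabla-iA)f\|^2_{L^2} = \|\nabla f\|^2_{L^2} - 2L_\Omega(f) + \Omega^2\|(x_1,x_2)f\|^2_{L^2}$ is correct (the cross term is $-2\int A\cdot \ima(\bar f\nabla f)\,dx = -2L_\Omega(f)$, using that the imaginary part of $\int \bar f L_z f$ vanishes by integration by parts), and the resulting decomposition with the manifestly nonnegative coefficient $\gamma^2-\Omega^2$ makes both directions of the equivalence essentially immediate; the triangle-inequality step recovering $\|\nabla f\|^2_{L^2}$ from $\|(\nabla-iA)f\|^2_{L^2}$ and $\|(x_1,x_2)f\|^2_{L^2}$ closes the lower bound, and your upper bound is the same elementary estimate as the paper's. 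What your approach buys is structural clarity: it exhibits exactly which coefficient degenerates at $\Omega=\gamma$ and why, and it anticipates the magnetic reformulation \eqref{RNLS-mag} that the paper adopts precisely at that critical speed; the paper's version is more self-contained but hides this mechanism inside the choice of $\delta$. (Minor point of rigor: the pointwise expansion and the integration by parts should be justified for general $f\in\Sigma$ by density of $C^\infty_0$, since all terms involved are continuous in the $\Sigma$-norm.)
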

	
	\begin{proof}
		We first observe from H\"older's inequality and Cauchy-Schwarz' inequality that for any $\delta>0$,
		\begin{align}
		|L_\Omega(f)| &\leq \Omega \left(\|x_1 f\|_{L^2} \|\partial_{x_2} f\|_{L^2} + \|x_2 f\|_{L^2} \|\partial_{x_1} f\|_{L^2}\right) \nonumber \\
		&\leq \Omega \left(\|x_1 f\|^2_{L^2} + \|x_2 f\|^2_{L^2}\right)^{\frac{1}{2}} \left(\|\partial_{x_1} f\|^2_{L^2} + \|\partial_{x_2} f\|^2_{L^2}\right)^{\frac{1}{2}} \nonumber \\
		&\leq \delta \|\nabla f\|^2_{L^2} + \frac{\Omega^2}{4\delta} \int_{\R^N} (x_1^2 + x_2^2)|f(x)|^2 dx. \label{est-L-omega}
		\end{align}
		Applying \eqref{est-L-omega} with $\delta =\frac{1}{2}$, we have
		\begin{align}
		(I):=\|\nabla f\|^2_{L^2} + 2\Vc(f)  - 2L_\Omega(f) &\leq 2\|\nabla f\|^2_{L^2} + \int_{\R^N} \left(2V(x) +\Omega^2(x_1^2 + x_2^2) \right) |f(x)|^2 dx \nonumber\\
		&\leq C_1 \left(\|\nabla f\|^2_{L^2} +  \|x f\|^2_{L^2}\right), \nonumber
		\end{align}
		where
		\[
		C_1:= 2+\max \left\{\gamma^2 + \Omega^2, \gamma_3^2, \cdots, \gamma_N^2\right\}.
		\]
		To see the reverse inequality, we use \eqref{est-L-omega} to have for any $\delta>0$,
		\begin{align*}
		(I) &\geq (1-2\delta)\|\nabla f\|^2_{L^2} + \int_{\R^N} \left(2V(x) -\frac{\Omega^2}{2\delta}(x_1^2 + x_2^2) \right) |f(x)|^2 dx \\
		&= (1-2\delta)\|\nabla f\|^2_{L^2} + \int_{\R^N} \Big[ \Big(\gamma^2- \frac{\Omega^2}{2\delta}\Big) x_1^2 + \Big(\gamma^2-\frac{\Omega^2}{2\delta}\Big) x_2^2 + \sum_{j=3}^N \gamma_j^2 x_j^2 \Big] |f(x)|^2 dx. 
		\end{align*}
		We choose $\delta>0$ such that
		\begin{align*}
		\gamma^2-\frac{\Omega^2}{2\delta} = \frac{\gamma^2-\Omega^2}{2} \quad \text{or} \quad \delta =\frac{1}{2} - \frac{\gamma^2-\Omega^2}{2(\gamma^2+\Omega^2)}>0.
		\end{align*}
		It follows that
		\begin{align*}
		(I) &\geq \frac{\gamma^2-\Omega^2}{\gamma^2+\Omega^2} \|\nabla f\|^2_{L^2} + \min\left\{ \frac{\gamma^2-\Omega^2}{2}, \gamma_3^2, \cdots, \gamma_N^2\right\} \|x f\|^2_{L^2}\nonumber \\
		&\geq C_2 \left(\|\nabla f\|^2_{L^2} +  \|x f\|^2_{L^2}\right),
		\end{align*}
		where
		\begin{align*}
		C_2:= \min \left\{\frac{\gamma^2-\Omega^2}{\gamma^2+\Omega^2}, \frac{\gamma^2-\Omega^2}{2}, \gamma_3^2, \cdots, \gamma_N^2 \right\}.
		\end{align*}
		The proof is complete.
	\end{proof}
	
	In the case of critical rotational speed $\Omega =\gamma$, the equivalent norm \eqref{equi-norm} is no longer available. Thus working on $\Sigma$ is not enough to study \eqref{RNLS-mag}. In particular, we have the following result.
	
	\begin{lemma} \label{lem-Sigma}
		Let $V$ be as in \eqref{defi-poten-V} satisfying \eqref{axia-pote} and $\Omega =\gamma$. Then $\Sigma \equiv \Sigma_A$, where 
		\[
		\Sigma_A:= \left\{ f \in H^1_A(\R^N) \ : \ |x| f \in L^2(\R^N)\right\}
		\]
		is equipped with the norm
		\[
		\|f\|^2_{\Sigma_A} := \|(\nabla -iA)f\|^2_{L^2} + \|xf\|^2_{L^2} + \|f\|^2_{L^2}.
		\]
	\end{lemma}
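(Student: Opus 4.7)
The statement is a set equality together with an equivalence of norms, and the only structural input needed is the pointwise bound on the magnetic potential. The plan is to exploit the trivial estimate
\[
|A(x)|^2 = \gamma^2(x_1^2+x_2^2) \leq \gamma^2|x|^2, \qquad x\in\R^N,
\]
which immediately gives $\|Af\|_{L^2}\leq \gamma \|xf\|_{L^2}$ for every $f$ with $|x|f\in L^2$. This single inequality will drive both inclusions and the norm equivalence.

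First I would prove $\Sigma \subset \Sigma_A$ with $\|f\|_{\Sigma_A}\lesssim \|f\|_\Sigma$. If $f\in\Sigma$, then $\nabla f\in L^2$ and $|x|f\in L^2$, so $Af\in L^2$ by the bound above, and hence $(\nabla-iA)f = \nabla f - iAf \in L^2$. The triangle inequality yields $\|(\nabla-iA)f\|_{L^2}\leq \|\nabla f\|_{L^2} + \gamma\|xf\|_{L^2}$, which combined with the $\|xf\|_{L^2}^2$ and $\|f\|_{L^2}^2$ terms gives $\|f\|_{\Sigma_A}^2 \leq C\|f\|_\Sigma^2$ with $C=C(\gamma)$.

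For the reverse inclusion $\Sigma_A\subset\Sigma$, the only subtle point is that membership in $H^1_A$ a priori provides only the combination $(\nabla-iA)f$ in $L^2$, not $\nabla f$ individually. But if $f\in\Sigma_A$ then $|x|f\in L^2$ gives $Af\in L^2$ as above, so the distributional identity
\[
\nabla f = (\nabla-iA)f + iAf
\]
exhibits $\nabla f$ as a sum of two $L^2$ functions, hence $\nabla f\in L^2$ and $f\in H^1$. The same triangle inequality then yields $\|\nabla f\|_{L^2}\leq \|(\nabla-iA)f\|_{L^2} + \gamma\|xf\|_{L^2}$, from which $\|f\|_\Sigma^2 \leq C'\|f\|_{\Sigma_A}^2$.

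There is no serious obstacle here; the proof is essentially a two-line computation in each direction, and the key qualitative reason it works is that $A$ is dominated by $|x|$, so the extra confinement $|x|f\in L^2$ present in both spaces makes the magnetic gradient and the ordinary gradient interchangeable modulo an $L^2$-term of the same type. Note that, as emphasized in Remark~\ref{rem-low-spe}, this equivalence only compares $\Sigma$ with $\Sigma_A$; it does not produce an equivalence between $\|\nabla f\|_{L^2}^2 + 2\Vc(f) - 2L_\gamma(f)$ and the $\Sigma$-norm, which genuinely fails at the critical speed $\Omega=\gamma$.
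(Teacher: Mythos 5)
Your proof is correct and follows essentially the same route as the paper: both arguments rest on the pointwise bound $|A(x)|\leq\gamma|x|$ and the decomposition of $\nabla f$ versus $(\nabla-iA)f$, the only cosmetic difference being that you apply the triangle inequality directly where the paper expands $\|(\nabla-iA)f\|_{L^2}^2$ and uses Cauchy--Schwarz. Your explicit remark that the distributional identity $\nabla f=(\nabla-iA)f+iAf$ upgrades $f\in H^1_A$ with $|x|f\in L^2$ to $f\in H^1$ is the right (and only) subtle point, and it is handled correctly.
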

	
	\begin{proof}
		We first observe that
		\begin{align} \label{obse-A}
		\|(\nabla-iA) f\|^2_{L^2} = \|\nabla f\|^2_{L^2} + 2 \rea \int (\nabla - iA) \cdot \overline{iA f} dx - \|Af\|^2_{L^2}.
		\end{align} 
		By H\"older's and Cauchy-Schwarz' inequalities, we have
		\[
		\|(\nabla -iA)f\|^2_{L^2} \leq \|\nabla f\|^2_{L^2} + \frac{1}{2} \|(\nabla - iA)f\|^2_{L^2} + 3 \|A f\|^2_{L^2}
		\]
		which implies
		\[
		\|(\nabla - iA) f\|^2_{L^2} \leq 2 \|\nabla f\|^2_{L^2} + 6 \|A f\|^2_{L^2} \leq 6 \left( \|\nabla f\|^2_{L^2} + \|xf\|^2_{L^2}\right).
		\]
		On the other hand, by \eqref{obse-A}, we have
		\[
		\|\nabla f\|^2_{L^2} \leq 2 \left( \|(\nabla-iA)f\|^2_{L^2} + \|Af\|^2_{L^2}\right) \leq 2 \left( \|(\nabla-iA)f\|^2_{L^2} + \|xf\|^2_{L^2}\right).
		\]
		The proof is complete.
	\end{proof}

	We next recall some basic properties of the magnetic Sobolev space $H^1_A(\R^N)$.
	
	\begin{lemma}[\cite{EL, LL, CDH}] \label{lem-magn-prop}
		Let $N\geq 2$ and $A$ be as in \eqref{defi-V-gamma}. We have the following properties:
		\begin{itemize}[leftmargin=6mm]
			\item $H^1_A(\R^N)$ is a Hilbert space.
			\item $C^\infty_0(\R^N)$ is dense in $H^1_A(\R^N)$.
			\item $H^1_A(\R^N)$ is continuously embedded in $L^r(\R^N)$ for all $2 \leq r \leq \frac{2N}{N-2}$.
			\item $H^1_A(\R^N) \subset H^1_{\loc}(\R^N) \subset L^r_{\loc}(\R^N)$ for all $2\leq r <\frac{2N}{N-2}$.
			\item Diamagnetic inequality:
			\begin{align} \label{diag-ineq}
			\|\nabla |f|(x)| \leq |(\nabla -iA) f(x)|, \quad \text{a.e. } x \in \R^N. 
			\end{align}
			\item Magnetic Gagliardo-Nirenberg inequality: for $2\leq r<\frac{2N}{N-2}$,
			\begin{align} \label{mag-GN-ineq}
			\|f\|^r_{L^r} \leq C_r \|(\nabla-iA) f\|^{\frac{N(r-2)}{2}}_{L^2} \|f\|^{\frac{N+2-(N-2)r}{2}}_{L^2}, \quad \forall f \in H^1_A(\R^N),
			\end{align}
			where the optimal constant $C_r$ is the same as the sharp constant in the standard Gagliardo-Nirenberg inequality
			\[
			\|f\|^r_{L^r} \leq C_r \|(\nabla-iA) f\|^{\frac{N(r-2)}{2}}_{L^2} \|f\|^{\frac{N+2-(N-2)r}{2}}_{L^2}, \quad \forall f \in H^1(\R^N).
			\]
			Moreover, the equality in \eqref{mag-GN-ineq} cannot be achieved.
		\end{itemize}
	\end{lemma}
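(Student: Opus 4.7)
The plan is to verify the six properties via classical techniques for magnetic Sobolev spaces, most of which are collected in \cite{EL, LL, CDH}. First I would check items 1 and 2 by routine arguments: the inner product
\[
\scal{f,g}_{H^1_A} := \int_{\R^N} f\overline{g} + (\nabla-iA)f \cdot \overline{(\nabla-iA)g}\,dx
\]
is positive definite, and a Cauchy sequence $(f_n) \subset H^1_A$ has both an $L^2$ limit $f$ and an $L^2$ limit $\xi$ of the magnetic gradients; identifying $\xi = (\nabla - iA)f$ in the distributional sense gives completeness. For density of $C^\infty_0$, I would truncate $f$ by a smooth cutoff $\chi_R$ and convolve with a mollifier, noting that $A$ is a smooth polynomial hence bounded on compacta, so the commutator of $A$ with the mollifier is controlled.

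The central technical tool, item 5, is proved by regularization: set $|f|_\vareps := \sqrt{|f|^2+\vareps^2}$ and use that $|f|^2 A$ is real to write
\[
\nabla |f|_\vareps = \frac{\rea\bigl(\overline{f}\,\nabla f\bigr)}{|f|_\vareps} = \frac{\rea\bigl(\overline{f}(\nabla-iA)f\bigr)}{|f|_\vareps},
\]
so the Cauchy-Schwarz inequality yields $|\nabla|f|_\vareps| \leq |(\nabla-iA)f|$, and letting $\vareps \to 0$ gives the diamagnetic bound. As a consequence, $|f| \in H^1(\R^N)$ whenever $f \in H^1_A(\R^N)$, and combined with $\||f|\|_{L^r} = \|f\|_{L^r}$ and the standard Sobolev embedding one obtains item 3. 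Item 4 is immediate from the local boundedness of $A$: on any bounded set $\nabla f = (\nabla - iA)f + iAf \in L^2_{\loc}$, so $H^1_A \subset H^1_{\loc}$, and the classical Sobolev embedding supplies $L^r_{\loc}$ for $2 \leq r < \tfrac{2N}{N-2}$.

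For item 6, the magnetic inequality follows by applying the classical Gagliardo-Nirenberg inequality to $|f|$ and chaining the bounds $\|\nabla |f|\|_{L^2} \leq \|(\nabla-iA)f\|_{L^2}$ and $\||f|\|_{L^r} = \|f\|_{L^r}$; both substitutions are in the correct direction so the sharp constant is preserved. The most delicate point, which I expect to be the main obstacle, is non-attainment. If equality held for some $f \not\equiv 0$, it would propagate separately to the classical Gagliardo-Nirenberg (forcing $|f|$ to be a translate/rescale of the positive extremal profile, so $\{f \neq 0\} = \R^N$) and to the diamagnetic inequality (forcing $\overline{f}(\nabla-iA)f$ to be pointwise real). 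Writing $f = e^{i\theta(x)}|f|$ globally on $\R^N$ and computing
\[
\overline{f}(\nabla-iA)f = |f|\nabla|f| + i|f|^2(\nabla\theta - A),
\]
the reality condition would force $\nabla \theta = A$ on $\R^N$. This contradicts $\partial_{x_1} A_2 - \partial_{x_2} A_1 = 2\gamma \neq 0$, since $A$ cannot then be a gradient, and non-attainment follows.
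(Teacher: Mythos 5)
The paper does not actually prove this lemma: it is stated as a compilation of known facts with references to \cite{EL, LL, CDH}, and for the sharp constant and non-attainability in \eqref{mag-GN-ineq} it defers entirely to \cite[Lemma 3.2]{CDH}. Your sketch reconstructs the standard proofs from those references, and its main lines are correct: completeness and density are routine once one notes that $A$ is a polynomial, hence locally bounded; the regularized identity $\nabla\sqrt{|f|^2+\varepsilon^2}=\rea\bigl(\overline{f}(\nabla-iA)f\bigr)/\sqrt{|f|^2+\varepsilon^2}$ gives \eqref{diag-ineq}; items 3, 4 and the validity of \eqref{mag-GN-ineq} with the classical constant follow by applying the scalar results to $|f|$; and your non-attainment argument (equality forces $|f|$ to be the positive Gagliardo--Nirenberg extremal, hence nowhere vanishing, then a global phase lifting $f=e^{i\theta}|f|$ and pointwise equality in the diamagnetic inequality force $\nabla\theta=A$, contradicting $\partial_{x_1}A_2-\partial_{x_2}A_1=2\gamma\neq 0$) is exactly the mechanism used in \cite[Lemma 3.2]{CDH}.

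Two points are left open in your write-up. First, for item 6 you only show that the magnetic inequality \emph{holds} with the classical sharp constant $C_r$, i.e.\ that the magnetic optimal constant is at most $C_r$; the statement also asserts that it \emph{equals} $C_r$, and for this you must exhibit an optimizing sequence. Take $f_\lambda(x)=\lambda^{N/2}Q(\lambda x)$ with $Q$ the real classical extremizer: since $Q$ is real the cross term in $\|(\nabla-iA)f_\lambda\|_{L^2}^2$ vanishes, so this quantity equals $\|\nabla f_\lambda\|_{L^2}^2+\|Af_\lambda\|_{L^2}^2$ with $\|Af_\lambda\|_{L^2}^2=O(\lambda^{-2})$ while $\|\nabla f_\lambda\|_{L^2}^2=\lambda^2\|\nabla Q\|_{L^2}^2$, so the magnetic quotient converges to the classical sharp one as $\lambda\to\infty$. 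Second, the global lifting $f=e^{i\theta}|f|$ with $\theta\in H^1_{\loc}$ deserves a word of justification: it relies on $\{f\neq 0\}=\R^N$ being simply connected and on $f/|f|$ belonging to $H^1_{\loc}$ with values in the unit circle. Neither point changes the structure of your argument, but the first is a genuine missing half of the sharp-constant claim.
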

	
	For the proof of the sharp magnetic Gagliardo-Nirenberg inequality \eqref{mag-GN-ineq} and its non-attainability, we refer the reader to \cite[Lemma 3.2]{CDH}.

	We next recall the following $L^2$-bound of the magnetic Sobolev norm due to \cite[Proposition 2.2]{EL}.
	
	\begin{lemma}[\cite{EL}]
		Let $A=(A_1, \cdots, A_N) \in W^{1,\infty}_{\loc}(\R^N, \R^N)$ and $j,k \in \{1, \cdots, N\}$. Then for any $f \in C^\infty_0(\R^N)$, we have
		\[
		\left| \int (\partial_j A_k - \partial_k A_j) f \overline{f} dx \right| \leq \|(\partial_j - iA_j) f\|^2_{L^2} + \|(\partial_k - iA_k) f\|^2_{L^2}.
		\]
		In particular, if $A$ is as in \eqref{defi-V-gamma}, then for any $f \in C^\infty_0(\R^N)$, we have
		\begin{align} \label{mag-L2-boun}
		2\gamma \|f\|^2_{L^2} \leq \|(\partial_1-iA_1) f\|^2_{L^2} + \|(\partial_2-iA_2) f\|^2_{L^2}.
		\end{align}
		In addition, if $N=2$, then \eqref{mag-L2-boun} is achieved by $f(x)=\sqrt{\frac{\gamma}{\pi}} e^{-\frac{\gamma}{2} |x|^2}$.
	\end{lemma}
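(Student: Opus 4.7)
The plan is to reduce the general inequality to a commutator identity, then to specialize to the prescribed $A$, and finally to verify the equality case by direct computation on the Gaussian.

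First I would introduce the shorthand $D_j := \partial_j - iA_j$ and compute the commutator on $C^\infty_0(\R^N)$:
\[
[D_j,D_k]f = D_jD_kf - D_kD_jf = -i(\partial_j A_k - \partial_k A_j)\,f,
\]
which is a direct expansion using that $A$ is real and $W^{1,\infty}_{\loc}$ (so the derivatives make classical sense against a test function). Equivalently,
\[
(\partial_j A_k - \partial_k A_j)f = i\,(D_jD_k - D_kD_j)f.
\]

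Next I would exploit the fact that $D_j$ is anti-symmetric with respect to the $L^2$ pairing, i.e.\ $\int (D_jf)\overline{g}\,dx = -\int f\,\overline{D_jg}\,dx$ for $f,g\in C^\infty_0$, which follows from integration by parts together with the reality of $A_j$. Applying this twice gives
\[
\int D_jD_kf\,\overline{f}\,dx = -\int D_kf\,\overline{D_jf}\,dx, \qquad \int D_kD_jf\,\overline{f}\,dx = -\int D_jf\,\overline{D_kf}\,dx,
\]
so that
\[
\int(\partial_j A_k-\partial_k A_j)\,f\overline{f}\,dx = i\int(D_jD_k - D_kD_j)f\,\overline{f}\,dx = -2\,\ima\!\int D_jf\,\overline{D_kf}\,dx.
\]
Taking absolute values, Cauchy--Schwarz and the AM--GM inequality $2ab\le a^2+b^2$ immediately yield
\[
\left|\int(\partial_j A_k-\partial_k A_j)\,f\overline{f}\,dx\right| \le 2\|D_jf\|_{L^2}\|D_kf\|_{L^2} \le \|D_jf\|^2_{L^2}+\|D_kf\|^2_{L^2},
\]
which is the general bound.

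For the second part, with $A$ as in \eqref{defi-V-gamma} one has $A_1=-\gamma x_2$, $A_2=\gamma x_1$, hence $\partial_1 A_2-\partial_2 A_1 = 2\gamma$; plugging $j=1$, $k=2$ into the above gives \eqref{mag-L2-boun}.

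For the equality case with $N=2$ and $f(x)=\sqrt{\gamma/\pi}\,e^{-\gamma|x|^2/2}$, I would simply compute both sides. One checks $\|f\|^2_{L^2}=1$ by a Gaussian integral. For the magnetic derivatives, $\partial_1 f = -\gamma x_1 f$ and $A_1 f = -\gamma x_2 f$, so $D_1 f = -\gamma(x_1-ix_2)f$; similarly $D_2 f = -i\gamma(x_1-ix_2)f = iD_1f$. Thus $\|D_1 f\|^2_{L^2}+\|D_2 f\|^2_{L^2} = 2\|D_1 f\|^2_{L^2} = 2\gamma^2\int|x|^2|f|^2\,dx$, and the remaining Gaussian moment $\int|x|^2|f|^2\,dx = 1/\gamma$ gives the value $2\gamma$, matching $2\gamma\|f\|^2_{L^2}$.

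There is really no serious obstacle here; the only subtlety I would flag is making sure that the boundary terms in the two successive integration by parts vanish (guaranteed by $f\in C^\infty_0$) and that the Cauchy--Schwarz step is applied to $\ima\int D_jf\,\overline{D_kf}\,dx$ rather than to the raw integrand, since it is precisely this imaginary-part identity that ties the magnetic field strength $\partial_jA_k-\partial_kA_j$ to the magnetic gradient norms.
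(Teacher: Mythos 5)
Your argument is correct and complete: the commutator identity $[D_j,D_k]f=-i(\partial_jA_k-\partial_kA_j)f$, the anti-symmetry of $D_j$ under integration by parts, Cauchy--Schwarz plus AM--GM, and the Gaussian computation for the equality case all check out. The paper itself states this lemma without proof, citing \cite[Proposition 2.2]{EL}, and your derivation is precisely the standard argument given there, so there is nothing to add.
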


	Thanks to \eqref{mag-L2-boun}, we have the following result.
	
	\begin{lemma} \label{lem-omega-gamma}
		Let $V$ be as in \eqref{defi-poten-V} satisfying \eqref{axia-pote}. Let $\omega^0$ and $\omega^0_\gamma$ be as in \eqref{omega-0} and \eqref{omega-gamma-0} respectively. Then we have 
			\begin{align} \label{omega-gamma}
			\omega^0_\gamma=\omega^0=\frac{1}{2} \sum_{j=1}^N \gamma_j.
			\end{align}
	\end{lemma}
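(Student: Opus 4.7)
The plan is to sandwich both $\omega^0$ and $\omega^0_\gamma$ between $\frac{1}{2}\sum_{j=1}^N\gamma_j$ from below and above. A single explicit test function gives the matching upper bounds for both quantities, while the matching lower bounds are obtained by a direction-by-direction argument that combines the magnetic estimate \eqref{mag-L2-boun} with the classical one-dimensional harmonic-oscillator inequality.

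For the upper bound, I will test with the normalized Gaussian $f_0(x) = \prod_{j=1}^N (\gamma_j/\pi)^{1/4} e^{-\gamma_j x_j^2/2}$, which belongs to $\Sigma \subset \Sigma_\gamma$ and has $M(f_0)=1$. Since $f_0$ is real, $\ima(\overline{f_0}\nabla f_0)=0$, so
\[
|(\nabla-iA)f_0|^2 = |\nabla f_0|^2 + |A|^2|f_0|^2 = |\nabla f_0|^2 + \gamma^2(x_1^2+x_2^2)|f_0|^2.
\]
Using $\gamma_1=\gamma_2=\gamma$, adding the $V_\gamma$-term recovers exactly $\frac{1}{2}\|\nabla f_0\|_{L^2}^2 + \Vc(f_0)$. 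A direct one-dimensional Gaussian computation in each factor then yields $\frac{1}{2}\|\nabla f_0\|_{L^2}^2+\Vc(f_0)=\frac{1}{2}\sum_{j=1}^N\gamma_j$, so simultaneously $\omega^0 \leq \frac{1}{2}\sum_j\gamma_j$ and $\omega^0_\gamma \leq \frac{1}{2}\sum_j\gamma_j$.

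For the lower bound on $\omega^0$, I will use the elementary identity
\[
\int_{\R^N} |(\partial_j + \gamma_j x_j)f|^2\,dx = \|\partial_j f\|_{L^2}^2 + \gamma_j^2\|x_j f\|_{L^2}^2 - \gamma_j\|f\|_{L^2}^2 \geq 0,
\]
first for $f\in C_0^\infty(\R^N)$ and then on all of $\Sigma$ by density. Summing over $j=1,\dots,N$ and dividing by $2$ gives $\tfrac12\|\nabla f\|_{L^2}^2+\Vc(f)\geq \tfrac12\sum_j\gamma_j\|f\|_{L^2}^2$, hence $\omega^0\geq \tfrac12\sum_j\gamma_j$. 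For the lower bound on $\omega^0_\gamma$, I will split $\|(\nabla-iA)f\|_{L^2}^2 = \sum_{j=1}^N\|(\partial_j-iA_j)f\|_{L^2}^2$; since $A_j\equiv 0$ for $j\geq 3$, the last $N-2$ terms are ordinary derivatives. I apply \eqref{mag-L2-boun} to the first two coordinates to obtain $\|(\partial_1-iA_1)f\|_{L^2}^2+\|(\partial_2-iA_2)f\|_{L^2}^2 \geq 2\gamma\|f\|_{L^2}^2 = (\gamma_1+\gamma_2)\|f\|_{L^2}^2$, and the same one-dimensional oscillator inequality (applied direction-wise) to the remaining $j\geq 3$ pairs $\tfrac12\|\partial_j f\|_{L^2}^2 + \tfrac12\gamma_j^2\|x_j f\|_{L^2}^2\geq \tfrac12\gamma_j\|f\|_{L^2}^2$. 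Summing closes the loop.

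The main obstacle, and really the only subtle point, is the justification of the 1D oscillator inequality on $\Sigma_\gamma$: the completion-of-squares identity requires an integration by parts, which is legitimate on $C_0^\infty(\R^N)$ but must be transferred to $\Sigma_\gamma$ via the density statement of Lemma \ref{lem-magn-prop} together with a routine approximation argument for the weighted $\|x_j f\|_{L^2}$ term. Everything else is book-keeping, and the equality $\omega^0_\gamma=\omega^0=\tfrac12\sum_j\gamma_j$ follows at once from the four inequalities produced above.
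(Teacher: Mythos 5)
Your proof is correct, and the upper bound (testing with the product Gaussian) is exactly what the paper does. Where you genuinely diverge is in the lower bound for $\omega^0_\gamma$. The paper follows \cite[Lemma 2.1]{BBJV}: it expands $f(x)=\sum_{k\geq 0} f_k(x_\perp)\Phi_k(x_\intercal)$ in the eigenbasis of the transverse oscillator $-\tfrac12\Delta_{\R^{N-2}}+V_\gamma$, applies the magnetic estimate \eqref{mag-L2-boun} to each two-dimensional slice $f_k$, and bounds the transverse part by the bottom eigenvalue $\lambda_0=\tfrac12\sum_{j\geq 3}\gamma_j$. You instead split $\|(\nabla-iA)f\|_{L^2}^2$ coordinate-wise, apply \eqref{mag-L2-boun} directly to the function on $\R^N$ (which is legitimate, since the estimate of Esteban--Lions is stated for $f\in C_0^\infty(\R^N)$ with arbitrary $N$ and the relevant curvature component is $\partial_1 A_2-\partial_2 A_1=2\gamma$), and handle each $x_j$-direction, $j\geq 3$, by the elementary ladder-operator inequality $\|\partial_j f\|_{L^2}^2+\gamma_j^2\|x_jf\|_{L^2}^2\geq \gamma_j\|f\|_{L^2}^2$. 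This avoids the spectral decomposition entirely and exploits the separability of the harmonic potential; the paper's spectral route is slightly heavier here but would survive replacing $V_\gamma$ by a non-separable confining potential, where only the ground-state energy $\lambda_0$ matters. Your treatment of $\omega^0$ by completing the square is likewise a self-contained substitute for the paper's appeal to the known first eigenvalue of the harmonic oscillator. The density issue you flag is real but routine, and you have correctly confined the weighted integration by parts to the directions $j\geq 3$ where $\Sigma_\gamma$ actually controls $\|x_jf\|_{L^2}$.
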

			
			\begin{proof}
				We first observe that $\omega^0$ is the simple first eigenvalue of the multi-dimensional harmonic oscillator $-\frac{1}{2}\Delta + V$. It is well-known that $\omega^0=\frac{1}{2} \sum_{j=1}^N \gamma_j$ and the corresponding eigenfunction to $\omega^0$ is
				\begin{align}
				\Phi(x):= \pi^{-\frac{N}{4}} \left( \prod_{j=1}^N \sqrt{\gamma_j}\right)^{\frac{1}{2}} e^{-\frac{1}{2}\sum_{j=1}^N \gamma_j x_j^2}.
				\end{align}
				We have $M(\Phi)=1$ and as $\Phi$ is real-valued,
				\begin{align*}
				\frac{1}{2}\|(\nabla-iA) \Phi\|^2_{L^2(\R^N)} + \int_{\R^N} V_\gamma(x)|\Phi(x)|^2 dx = \frac{1}{2} \|\nabla \Phi\|^2_{L^2(\R^N)} + \int_{\R^N} V(x) |\Phi(x)|^2 dx =\frac{1}{2}\sum_{j=1}^N \gamma_j
				\end{align*}
				which shows $\omega^0_\gamma \leq \frac{1}{2}\sum_{j=1}^N \gamma_j$.
				
				To see the reverse inequality, we follow an argument of \cite[Lemma 2.1]{BBJV}. Denote $x=(x_\perp, x_\intercal)$ with $x_\perp=(x_1,x_2) \in \R^2$ and $x_\intercal =(x_3, \cdots, x_N) \in \R^{N-2}$ and consider
				\[
				\lambda_0:= \inf \left\{ \frac{1}{2}\int_{\R^{N-2}} |\nabla g(x_\intercal)|^2 dx_\intercal + \int_{\R^{N-2}} V_\gamma(x_\intercal) |g(x_\intercal)|^2 dx_\intercal  \ : \ g \in \Sigma(\R^{N-2}), \|g\|^2_{L^2(\R^{N-2})} =1 \right\}.
				\]
				We have $\lambda_0 = \frac{1}{2}\sum_{j=3}^N \gamma_j$. Let $\Phi_k(x_\intercal)$ and $\lambda_k$ for $k\geq 0$ be such that
				\begin{align} \label{defi-Phi-k}
				\left(-\frac{1}{2} \Delta_{\R^{N-2}} + V_\gamma\right) \Phi_k = \lambda_k \Phi_k, \quad \|\Phi_k\|^2_{L^2(\R^{N-2})}=1, \quad \lambda_k \leq \lambda_{k+1}, \quad \forall k\geq 0.
				\end{align}
				Note that $(\Phi_k)_{k\geq 0}$ forms an orthonormal basis of $L^2(\R^{N-2})$. For $f \in \Sigma_\gamma$, we write
				\begin{align} \label{decom-f}
				f(x) = \sum_{k\geq 0} f_k(x_\perp) \Phi_k(x_\intercal).
				\end{align}
				If $\|f\|^2_{L^2(\R^N)} =1$, then
				\begin{align} \label{iden-f}
				1= \|f\|^2_{L^2(\R^N)} &= \sum_{k\geq 0} \Big(\int_{\R^2} |f_k(x_\perp)|^2 dx_\perp\Big) \times \Big(\int_{\R^{N-2}} |\Phi_k(x_\intercal)|^2 dx_\intercal \Big) \nonumber \\
				&= \sum_{k\geq0} \|f_k\|^2_{L^2(\R^2)}.
				\end{align}
				It follows from \eqref{decom-f}, \eqref{iden-f}, and the integration by parts that
				\begin{align*}
				\frac{1}{2} \|(\nabla&-iA) f\|^2_{L^2(\R^N)} + \int_{\R^N} V_\gamma(x)|f(x)|^2 dx \\
				&= \frac{1}{2} \sum_{j=1}^2 \int_{\R^N} |(\partial_j - iA_j)f(x)|^2 dx + \frac{1}{2} \sum_{j=3}^N \int_{\R^N} |\partial_j f(x)|^2 dx + \int_{\R^N} V_\gamma(x) |f(x)|^2 dx \\
				&= \frac{1}{2} \sum_{j=1}^2 \int_{\R^N} |(\partial_j - iA_j)f(x)|^2 dx + \int_{\R^N} \left(-\frac{1}{2}\Delta_{\R^{N-2}} + V_\gamma\right) f(x) \overline{f}(x) dx \\
				&=  \frac{1}{2} \sum_{k\geq 0} \sum_{j=1}^2 \int_{\R^2} |(\partial_j -iA_j) f_k(x_\perp)|^2 dx_\perp + \sum_{k\geq 0} \lambda_k \int_{\R^2} |f_k(x_\perp)|^2 dx_\perp \\
				&\geq \left(\gamma+ \lambda_0\right) \sum_{k\geq 0} \|f_k\|^2_{L^2(\R^2)} \\
				&=\gamma+\lambda_0 = \frac{1}{2}\sum_{j=1}^N \gamma_j.
				\end{align*}
				Here we have used \eqref{mag-L2-boun} with $N=2$ to get the last inequality. This shows that $\omega^0_\gamma \geq \frac{1}{2}\sum_{j=1}^N \gamma_j$. The proof is complete.
			\end{proof}
	
	We also need the following weak convergence result which is based on an idea of  \cite[Lemma 3.4]{BBJV}.
	
	\begin{lemma} \label{lem-weak-conv}
		Let $(f_n)_{n\geq 1}$ be a sequence of $\Sigma_\gamma$-functions satisfying
		\begin{align} \label{boun-fn}
		\sup_{n\geq 1} \|f_n\|_{\Sigma_\gamma} <\infty.
		\end{align}
		Assume that there exists $\vareps_0>0$ such that
		\begin{align} \label{low-boun-fn}
		\inf_{n\geq 1} \|f_n\|_{L^{p+1}} \geq \vareps_0
		\end{align}
		for some $1<p<1+\frac{4}{N-2}$. Then there exist $f \in \Sigma_\gamma \backslash \{0\}$ and $(y_n)_{n\geq 1} \subset \R^N$ with $y_n= (y_n^1, y_n^2, 0, \cdots, 0)$ such that up to a subsequence,
		\[
		e^{iA(y_n) \cdot x} f_n(x+y_n) \rightharpoonup f \text{ weakly in } \Sigma_\gamma.
		\]
	\end{lemma}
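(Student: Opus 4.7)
The plan is to combine the diamagnetic inequality with a Lions-type concentration argument applied to $|f_n|$, use the partial harmonic confinement $V_\gamma$ to localize the concentration points into the subspace $\Theta:=\R^2\times\{0\}_{\R^{N-2}}$, and exploit the gauge covariance of $(\nabla-iA)$ under magnetic translations to produce a nonzero weak limit in $\Sigma_\gamma$.

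By the diamagnetic inequality \eqref{diag-ineq} and the bound \eqref{boun-fn}, the sequence $(|f_n|)_{n\geq 1}$ is bounded in $H^1(\R^N)$. Since $p+1\in(2,2N/(N-2))$ (with the convention $2N/(N-2):=\infty$ for $N=2$), the hypothesis \eqref{low-boun-fn} together with Lions' vanishing lemma yields $R,\delta>0$ and a sequence $(z_n)\subset\R^N$ such that, up to subsequence,
\[
\int_{B(z_n,R)}|f_n|^{p+1}\,dx\geq\delta.
\]
Write $z_n=(z_n^\perp,z_n^\intercal)$ with $z_n^\perp\in\R^2$ and $z_n^\intercal\in\R^{N-2}$. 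I would then show that $(z_n^\intercal)$ is bounded (this step being vacuous when $N=2$). Indeed, if $|z_n^\intercal|\to\infty$, then on $B(z_n,R)$ one has $|x_\intercal|\geq|z_n^\intercal|-R\to\infty$, so a Chebyshev estimate applied to the uniform bound $\int V_\gamma|f_n|^2\,dx\leq C$ gives $\|f_n\|_{L^2(B(z_n,R))}\to 0$. Interpolating $L^{p+1}(B(z_n,R))$ between $L^2(B(z_n,R))$ and $L^{2N/(N-2)}(\R^N)$, the latter being uniformly bounded via Sobolev applied to $|f_n|$ and the diamagnetic inequality, would then force $\int_{B(z_n,R)}|f_n|^{p+1}\to 0$, contradicting the display above. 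Passing to a further subsequence we may assume $z_n^\intercal\to\bar z_\intercal\in\R^{N-2}$.

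Set $y_n:=(z_n^\perp,0)\in\Theta$ and consider $g_n(x):=e^{iA(y_n)\cdot x}f_n(x+y_n)$. A direct computation, exploiting the linearity of $A$, the fact that $A$ is supported in the first two coordinates, and that $V_\gamma$ depends only on $x_\intercal$ with $y_n^\intercal=0$, gives
\[
\|(\nabla-iA)g_n\|_{L^2}=\|(\nabla-iA)f_n\|_{L^2},\qquad \|g_n\|_{L^2}=\|f_n\|_{L^2},\qquad \int_{\R^N}V_\gamma|g_n|^2\,dx=\int_{\R^N}V_\gamma|f_n|^2\,dx,
\]
so $(g_n)$ is bounded in $\Sigma_\gamma$ and, up to subsequence, $g_n\rightharpoonup f$ weakly in $\Sigma_\gamma$. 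For $n$ large, $B(z_n,R)\subset B(y_n,R')$ with $R':=R+|\bar z_\intercal|+1$, so after the change of variable $x\mapsto x-y_n$,
\[
\int_{B(0,R')}|g_n|^{p+1}\,dx\geq\delta.
\]
The compact embedding $\Sigma_\gamma\hookrightarrow L^{p+1}_{\loc}(\R^N)$, which follows from $H^1_A\subset H^1_{\loc}$ (Lemma \ref{lem-magn-prop}) and Rellich--Kondrachov in view of $p+1<2N/(N-2)$, promotes this to strong convergence on $B(0,R')$, and passing to the limit yields $\int_{B(0,R')}|f|^{p+1}\,dx\geq\delta$; in particular $f\in\Sigma_\gamma\setminus\{0\}$.

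The main obstacle is the second step: $V_\gamma$ provides confinement only in the $x_\intercal$-directions, so concentration in $x_\perp$ must be absorbed into translations rather than into compactness, and this is exactly what forces $y_n$ to live in $\Theta$. This choice is essential for the third step, since only magnetic translations with $y_n\in\Theta$ leave the partial potential energy and, by gauge covariance, the magnetic kinetic energy simultaneously invariant.
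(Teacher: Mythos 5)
Your argument is correct and shares the paper's overall architecture (diamagnetic inequality to pass to $|f_n|$, a concentration argument, magnetic translations along $\Theta$, local compactness to see the weak limit is nonzero), but the concentration step is carried out by a different device. The paper does not invoke the classical vanishing lemma on balls: it proves a slab-adapted inequality
\[
\|f\|^{2+\frac{4}{N}}_{L^{2+\frac{4}{N}}} \leq C \Big( \sup_{k} \|f\|_{L^2(Q_k)} \Big)^{\frac{4}{N}} \|f\|^2_{H^1}, \qquad Q_k = (k_1,k_1+1)\times(k_2,k_2+1)\times \R^{N-2},
\]
i.e.\ it partitions only the $(x_1,x_2)$-plane and leaves the confined directions unbounded, so the translation vectors land in $\Theta$ automatically, and the nonvanishing of the weak limit is then read off from the compactness of $\Sigma_\gamma \hookrightarrow L^2(Q_0)$ on the unbounded strip $Q_0$ (bounded in $x_\perp$, confined by $V_\gamma$ in $x_\intercal$). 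You instead use the standard Lions lemma on balls of $\R^N$ and then a Chebyshev argument on the uniform bound for $\int V_\gamma |f_n|^2$ to show the transverse components $z_n^\intercal$ stay bounded; this buys you a fixed ball $B(0,R')$ on which ordinary Rellich compactness suffices (avoiding the compact embedding on an unbounded strip, which the paper uses without proof), at the price of an extra, easy localization step that is vacuous for $N=2$. Both routes are sound. One shared caveat: the exact gauge covariance $\|(\nabla-iA)\bigl(e^{i\theta(\cdot)}f(\cdot+y)\bigr)\|_{L^2}=\|(\nabla-iA)f\|_{L^2}$ holds with the phase $\theta(x)=-A(y)\cdot x = A(x)\cdot y$ rather than $+A(y)\cdot x$ as written in the statement; this sign-convention slip is inherited from the paper and affects neither argument in substance.
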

	
	\begin{proof}
		By interpolation, it follows from \eqref{low-boun-fn} that
		\[
		\inf_{n\geq 1} \|f_n\|_{L^{2+\frac{4}{N}}} \geq \vareps_1>0.
		\]
		In fact, if $p=1+\frac{4}{N}$, then we are done. If $1<p<1+\frac{4}{N}$, we interpolate between $L^2$ and $L^{2+\frac{4}{N}}$. If $1+\frac{4}{N}<p<1+\frac{4}{N-2}$, we interpolate between $L^{2+\frac{4}{N}}$ and $L^r$ for some $p+1<r<2+\frac{4}{N-2}$ and use the Sobolev embedding $H^1 \subset L^r$. 
		
		On the other hand, by Sobolev embedding, we have
		\[
		\|f\|^{2+\frac{4}{N}}_{L^{2+\frac{4}{N}}(Q_k)} \leq C \|f\|^{\frac{4}{N}}_{L^2(Q_k)} \|f\|^2_{H^1(Q_k)},
		\]
		where
		\[
		Q_k := (k, k+1) \times (k, k+1) \times \R^{N-2}, \quad k \in \Z.
		\]
		Taking the sum over $k \in \Z$, we get
		\[
		\|f\|^{2+\frac{4}{N}}_{L^{2+\frac{4}{N}}} \leq C \left( \sup_{k\in \Z} \|f\|_{L^2(Q_k)} \right)^{\frac{4}{N}} \|f\|^2_{H^1}.
		\]
		Replacing $f$ by $|f|$ and using the diamagnetic inequality \eqref{diag-ineq}, we have
		\begin{align*}
		\|f\|^{2+\frac{4}{N}}_{L^{2+\frac{4}{N}}} &\leq C \left( \sup_{k\in \Z} \|f\|_{L^2(Q_k)} \right)^{\frac{4}{N}} \left( \|\nabla |f| \|^2_{L^2} + \|f\|^2_{L^2} \right) \\
		&\leq C \left( \sup_{k\in \Z} \|f\|_{L^2(Q_k)} \right)^{\frac{4}{N}} \left( \|(\nabla-iA) f \|^2_{L^2} + \|f\|^2_{L^2} \right).
		\end{align*}
		Thanks to the above inequality, we infer from \eqref{boun-fn} that there exists $(k_n)_{n\geq 1} \subset \Z$ such that
		\[
		\inf_{n\geq 1} \|f_n\|_{L^2(Q_{k_n})} \geq C.
		\]
		for some constant $C>0$. Set $y_n=(-k_n, -k_n, 0, \cdots, 0)$ and 
		\[
		g_n(x) := e^{iA(y_n) \cdot x} f_n(x+y_n).
		\]
		We have 
		\[
		\|g_n\|^2_{L^2} = \|f_n\|^2_{L^2}, \quad \int_{\R^N} V_\gamma(x) |g_n(x)|^2 dx = \int_{\R^N} V_\gamma(x) |f_n(x)|^2 dx, \quad \|(\nabla-iA)g_n\|^2_{L^2} = \|(\nabla-iA) f_n\|^2_{L^2}.
		\]
		Here we note that $V_\gamma$ is independent of $x_1$ and $x_2$ variables. Moreover, we have
		\[
		\|g_n\|^2_{L^2(Q_0)} = \int_{(0,1)^2 \times \R^{N-2}} |f_n(x_1-k_n, x_2-k_n, x_3, \cdots, x_N)|^2 dx = \int_{Q_{k_n}} |f_n(x)|^2 dx \geq C, \quad \forall n\geq 1.
		\]
		Thus we obtain $\sup_{n\geq 1} \|g_n\|_{\Sigma_\gamma} <\infty$ and
		\[
		\inf_{n\geq 1} \|g_n\|^2_{L^2(Q_0)} \geq C>0.
		\]
		By the compactness property coming from the boundedness in the $x_1, x_2$ variables and the confining potential $V_\gamma$ in the $x_3,\cdots, x_N$ variables, there exists $f \in \Sigma_\gamma \backslash \{0\}$ such that up to a subsequence,
		\[
		g_n \rightharpoonup f \text{ weakly in } \Sigma_\gamma.
		\]
		The proof is complete.
	\end{proof}
	
	\section{Standing waves in the mass-(sub)critical regimes}
	\label{S3}
	\setcounter{equation}{0}
	
	In this section, we give the proofs of the existence and stability of prescribed mass standing waves given in Theorems \ref{theo-mass-sub} and \ref{theo-mass-cri}. Let us start with the following result which plays an important role in ruling out the vanishing scenario.
	
	\begin{lemma} \label{lem-non-vani}
		Let $N\geq 2$, $1<p<1+\frac{4}{N-2}$, $V$ be as in \eqref{defi-poten-V} satisfying \eqref{axia-pote}, and $\Omega=\gamma$. Let $c>0$ and $(f_n)_{n\geq 1}$ be a minimizing sequence for $I_\gamma(c)$. Then there exists $C>0$ such that 
		\[
		\liminf_{n\rightarrow \infty} \|f_n\|_{L^{p+1}} \geq C >0.
		\]
	\end{lemma}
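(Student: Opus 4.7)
The plan is to argue by contradiction, combining the sharp lower bound for the linear part of the energy from Lemma \ref{lem-omega-gamma} with an explicit test function that strictly beats it at the nonlinear level.

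First I would record a pointwise energy lower bound: for any $f \in S(c)$, applying the variational definition \eqref{omega-gamma-0} to $f/\sqrt{c}$ yields
\[
\frac{1}{2}\|(\nabla-iA) f\|^2_{L^2} + \int_{\R^N} V_\gamma(x)|f(x)|^2 dx \geq \omega^0_\gamma c,
\]
so that
\[
E_\gamma(f) \geq \omega^0_\gamma c - \frac{2}{p+1}\|f\|^{p+1}_{L^{p+1}} \quad \text{for every } f \in S(c).
\]
If the conclusion failed, there would be a subsequence of $(f_n)$, still labeled $(f_n)$, with $\|f_n\|_{L^{p+1}} \to 0$. Plugging $f_n$ into the displayed inequality and taking the liminf would give $I_\gamma(c) = \lim_n E_\gamma(f_n) \geq \omega^0_\gamma c$.

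Second, I would produce a test function $\phi_0 \in S(c)$ with $E_\gamma(\phi_0) < \omega^0_\gamma c$, contradicting the previous step. The natural choice is $\phi_0 := \sqrt{c}\,\Phi$, where $\Phi$ is the real, positive, normalized ground state of the harmonic oscillator $-\frac{1}{2}\Delta + V$ used in the proof of Lemma \ref{lem-omega-gamma}. Under the axial symmetry \eqref{axia-pote} we have $V(x) = \tfrac{\gamma^2}{2}(x_1^2+x_2^2) + V_\gamma(x)$; expanding $|(\nabla-iA)\Phi|^2 = |\nabla \Phi|^2 + |A\Phi|^2$ for real $\Phi$ and using $|A(x)|^2 = \gamma^2(x_1^2+x_2^2)$, one finds
\[
\frac{1}{2}\|(\nabla-iA)\Phi\|^2_{L^2} + \int_{\R^N} V_\gamma(x)|\Phi(x)|^2 dx = \frac{1}{2}\|\nabla \Phi\|^2_{L^2} + \int_{\R^N} V(x)|\Phi(x)|^2 dx = \omega^0 = \omega^0_\gamma.
\]
Therefore
\[
I_\gamma(c) \leq E_\gamma(\sqrt{c}\,\Phi) = \omega^0_\gamma c - \frac{2 c^{(p+1)/2}}{p+1}\|\Phi\|^{p+1}_{L^{p+1}} < \omega^0_\gamma c,
\]
since $\Phi \not\equiv 0$ and $c>0$. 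Comparing with the previous paragraph yields the desired contradiction, so $\liminf_n \|f_n\|_{L^{p+1}} > 0$; a quantitative value such as $C := \bigl(\tfrac{p+1}{2}(\omega^0_\gamma c - I_\gamma(c))\bigr)^{1/(p+1)}$ can be extracted directly from the pointwise bound combined with $E_\gamma(f_n) \to I_\gamma(c)$.

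I do not anticipate any serious obstacle: the only nontrivial point is the cancellation that turns $\tfrac{1}{2}\|A\Phi\|^2_{L^2}$ together with the partial confinement $V_\gamma$ back into the full axial potential $V$, and this is exactly where the hypothesis $\gamma_1 = \gamma_2 = \gamma$ is used. Note that the argument is insensitive to the exponent beyond $p>1$, so the same proof covers both the mass-subcritical and mass-critical settings of Theorems \ref{theo-mass-sub} and \ref{theo-mass-cri} uniformly.
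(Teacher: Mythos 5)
Your proposal is correct and follows essentially the same route as the paper: the lower bound $E_\gamma(f)\geq \omega^0_\gamma c - \tfrac{2}{p+1}\|f\|^{p+1}_{L^{p+1}}$ on $S(c)$ from \eqref{omega-gamma-0} and \eqref{omega-gamma}, contradicted by a test function whose linear energy is exactly $\omega^0_\gamma c$. Your test function $\sqrt{c}\,\Phi$ is in fact identical to the paper's product $g(x_\perp)h(x_\intercal)$ under \eqref{axia-pote}, and the cancellation $\tfrac12|A\Phi|^2+V_\gamma|\Phi|^2=V|\Phi|^2$ you highlight is exactly the computation the paper performs inside Lemma \ref{lem-omega-gamma}.
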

	
	\begin{proof}
		Assume by contradiction that there exists a subsequence still denoted by $(f_n)_{n\geq 1}$ satisfying $\lim_{n\rightarrow \infty} \|f_n\|_{L^{p+1}}=0$. By \eqref{omega-gamma-0} and \eqref{omega-gamma}, we see that
		\begin{align} \label{lowe-boun-I-gamma-c}
		I_\gamma(c)=\lim_{n\rightarrow \infty} E_\gamma(f_n) &= \lim_{n\rightarrow \infty} \frac{1}{2} \|(\nabla-iA) f_n\|^2_{L^2} + \int_{\R^N} V_\gamma(x) |f_n(x)|^2 dx \nonumber \\
		&\geq \lim_{n\rightarrow \infty} \omega^0_\gamma \|f_n\|^2_{L^2} = \omega^0_\gamma c.
		\end{align}
		Denote $x=(x_\perp,x_\intercal)$ with $x_\perp=(x_1,x_2) \in \R^2$ and $x_\intercal=(x_3, \cdots, x_N) \in \R^{N-2}$ and set $g(x_\perp) = \sqrt{\frac{\gamma}{\pi}}e^{-\frac{\gamma}{2}|x_\perp|^2}$. We readily check that
		\[
		\|g\|^2_{L^2(\R^2)}=1, \quad \|(\partial_1-iA_1) g\|^2_{L^2(\R^2)} + \|(\partial_2-iA_2)g\|^2_{L^2(\R^2)} = 2\gamma.
		\]
		Next let $h(x_\intercal) = \sqrt{c} \Phi_0(\intercal)$, where $\Phi_0 \in \Sigma(\R^{N-2})$ is as in \eqref{defi-Phi-k}. We have
		\[
		\|h\|^2_{L^2(\R^{N-2})}=c, \quad \frac{1}{2} \int_{\R^{N-2}} |\nabla h(x_\intercal)|^2dx_\intercal + \int_{\R^{N-2}} V_\gamma(x_\intercal) |h(x_\intercal)|^2 dx_\intercal = \frac{c}{2} \sum_{j=3}^N \gamma_j.
		\]
		Now we define $f(x) = g(x_\perp)h(x_\intercal)$. It follows that
		\[
		\|f\|^2_{L^2} = \|g\|^2_{L^2(\R^2)} \|h\|^2_{L^2(\R^{N-2})} = c
		\]
		and
		\begin{align}
		E_\gamma(f)&= \frac{1}{2}\left(\|(\partial_1-iA_1) g\|^2_{L^2(\R^2)} + \|(\partial_2-iA_2) g\|^2_{L^2(\R^2)} \right) \|h\|^2_{L^2(\R^{N-2})} +\frac{1}{2} \|g\|^2_{L^2(\R^2)} \sum_{j=3}^N \|\partial_j h\|^2_{L^2(\R^{N-2})} \nonumber \\
		&\mathrel{\phantom{=}} + \left(\int_{\R^{N-2}} V_\gamma(x_\intercal) |h(x_\intercal)|^2 dx_\intercal\right) \|g\|^2_{L^2(\R^2)} - \frac{2}{p+1} \|g\|^{p+1}_{L^{p+1}(\R^2)} \|h\|^{p+1}_{L^{p+1}(\R^{N-2})} \nonumber\\
		&=  \left(\gamma+\frac{1}{2}\sum_{j=3}^N \gamma_j\right) c - \frac{2}{p+1} \|g\|^{p+1}_{L^{p+1}(\R^2)} \|h\|^{p+1}_{L^{p+1}(\R^{N-2})} \nonumber\\
		&< \omega^0_\gamma c, \label{est-I-c}
		\end{align}
		where the last inequality comes from \eqref{omega-gamma}. This contradicts \eqref{lowe-boun-I-gamma-c} and the proof is complete.
	\end{proof}
	
	We next have the following global well-posedness which is needed for the stability result.
	
	\begin{lemma} \label{lem-gwp}
		Let $N\geq 2$, $V$ be as in \eqref{defi-poten-V} satisfying \eqref{axia-pote}, $\Omega=\gamma$, and $u_0 \in \Sigma_\gamma$. Then the corresponding solution to \eqref{RNLS} exists globally in time provided that one of the following conditions holds:
		\begin{itemize}[leftmargin=6mm]
			\item $1<p<1+\frac{4}{N}$.
			\item $p=1+\frac{4}{N}$ and $M(u_0) < M(Q)$, where $Q$ is the unique positive radial solution to \eqref{Q}.
		\end{itemize}
	\end{lemma}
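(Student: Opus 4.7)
The plan is to invoke the blow-up alternative \eqref{blow-alte-gamma}: finite-time blow-up of $\|u(t)\|_{\Sigma_\gamma}$ must be excluded on every bounded time interval. Because mass is conserved and $V_\gamma \geq 0$, this reduces to controlling
\[
K(t) := \tfrac{1}{2}\|(\nabla-iA)u(t)\|^2_{L^2} + \int_{\R^N} V_\gamma(x)|u(t,x)|^2\, dx.
\]
Conservation of energy rewrites this as $K(t) = E_\gamma(u_0) + \tfrac{2}{p+1}\|u(t)\|^{p+1}_{L^{p+1}}$, so everything hinges on dominating the $L^{p+1}$-norm by a fraction of the magnetic kinetic part plus a data-dependent constant.

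The tool is the sharp magnetic Gagliardo--Nirenberg inequality \eqref{mag-GN-ineq} applied with $r = p+1$. Combined with $\|u(t)\|_{L^2}^2 = M(u_0)$, it schematically reads
\[
\|u(t)\|^{p+1}_{L^{p+1}} \leq C\,M(u_0)^{\theta}\,\|(\nabla-iA)u(t)\|^{N(p-1)/2}_{L^2}
\]
for an explicit $\theta > 0$. In the mass-subcritical regime $1 < p < 1+4/N$ the exponent $N(p-1)/2$ is strictly less than $2$. Young's inequality with a small parameter $\epsilon$ therefore dominates the right-hand side by $\epsilon\|(\nabla-iA)u(t)\|^2_{L^2} + C_\epsilon M(u_0)^{\tilde\theta}$; inserting this into the energy identity and choosing $\epsilon$ small enough to absorb the kinetic term on the left produces a uniform bound on $K(t)$ depending only on $E_\gamma(u_0)$ and $M(u_0)$.

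In the mass-critical case $p = 1+4/N$ the exponent becomes exactly $2$, so Young's inequality fails; the rescue is the sharpness clause in Lemma \ref{lem-magn-prop}, which asserts that the optimal magnetic constant coincides with the non-magnetic one and is characterised by $Q$. The usual Pohozaev computation on \eqref{Q} yields
\[
\tfrac{2}{p+1}\|u(t)\|^{p+1}_{L^{p+1}} \leq \tfrac{1}{2}\left(\frac{M(u_0)}{M(Q)}\right)^{2/N}\|(\nabla-iA)u(t)\|^2_{L^2},
\]
so the energy identity gives
\[
\tfrac{1}{2}\Bigl(1-(M(u_0)/M(Q))^{2/N}\Bigr)\|(\nabla-iA)u(t)\|^2_{L^2} + \int_{\R^N} V_\gamma(x)|u(t,x)|^2\, dx \leq E_\gamma(u_0).
\]
The hypothesis $M(u_0) < M(Q)$ makes the bracket strictly positive, again producing a uniform bound on $K(t)$ and hence on $\|u(t)\|_{\Sigma_\gamma}$.

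Nothing in the argument is a serious obstacle. The active ingredient in the mass-critical threshold is the identification of the sharp magnetic GN constant with $M(Q)$ via Lemma \ref{lem-magn-prop}; the partial confinement $V_\gamma$ plays no role beyond its non-negativity, so replacing the non-magnetic NLS with its magnetic axisymmetric analogue costs nothing once the sharp magnetic GN inequality is in hand.
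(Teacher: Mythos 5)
Your proposal is correct and follows essentially the same route as the paper: both arguments reduce global existence to a uniform bound on $\|u(t)\|_{\Sigma_\gamma}$ via the blow-up alternative \eqref{blow-alte-gamma}, then control $\frac{2}{p+1}\|u(t)\|^{p+1}_{L^{p+1}}$ by the magnetic Gagliardo--Nirenberg inequality \eqref{mag-GN-ineq} together with mass conservation --- using Young's inequality to absorb the subquadratic power when $1<p<1+\frac{4}{N}$, and the sharp constant $C_{2+\frac{4}{N}}=\frac{N+2}{2N}[M(Q)]^{-\frac{2}{N}}$ with the hypothesis $M(u_0)<M(Q)$ in the critical case. The coercivity coefficient $\frac{1}{2}\bigl(1-(M(u_0)/M(Q))^{2/N}\bigr)$ you obtain is exactly the one in \eqref{est-mass-cri}, so no further comparison is needed.
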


	\begin{proof}
		Let $u:(-T_*,T^*)\times \R^N \rightarrow \C$ be the maximal solution to \eqref{RNLS}, hence to \eqref{RNLS-mag}. By the blow-up alternative \eqref{blow-alte-gamma}, it suffices to prove that 
		\begin{align}\label{blow-alte-gamma-appl}
		\sup_{t\in (-T_*,T^*)} \|u(t)\|_{\Sigma_\gamma} <\infty.
		\end{align}
		
		(1) Mass-subcritical case. By the magnetic Gagliardo-Nirenberg inequality \eqref{mag-GN-ineq}, Young's inequality with $0<N(p-1)<4$, and the conservation of mass, we have for any $\vareps>0$,
		\begin{align} \label{est-mass-sub}
		\frac{2}{p+1}\|u(t)\|^{p+1}_{L^{p+1}} &\leq C\|(\nabla-iA) u(t)\|^{\frac{N(p-1)}{2}}_{L^2} \|u(t)\|^{\frac{4-(N-2)(p-1)}{2}}_{L^2} \\
		&\leq \vareps \|(\nabla-iA)u(t)\|^2_{L^2} + C(N,p, \vareps, M(u_0)). \nonumber
		\end{align}
		Thus we get
		\[
		E_\gamma(u(t)) \geq \left(\frac{1}{2}-\vareps\right) \|(\nabla-iA) u(t)\|^2_{L^2} + \int_{\R^N} V_\gamma(x) |u(t,x)|^2 dx - C(N,p,\vareps, M(u_0)).
		\]
		Taking $\vareps=\frac{1}{2}$ and using the conservation of energy, we obtain
		\[
		\frac{1}{2} \|(\nabla-iA) u(t)\|^2_{L^2} + \int_{\R^N} V_\gamma(x)|u(t,x)|^2 dx \leq C(N, p, M(u_0), E_\gamma(u_0))
		\]
		for all $t\in (-T_*,T^*)$. This proves \eqref{blow-alte-gamma-appl}. 
		
		(2) Mass-critical case. By the magnetic Gagliardo-Nirenberg inequality \eqref{mag-GN-ineq} with the optimal constant $C_{2+\frac{4}{N}}=\frac{N+2}{2N}[M(Q)]^{-\frac{2}{N}}$ and the conservation of mass, we have 
		\begin{align} \label{est-mass-cri}
		E_\gamma(u(t)) \geq \frac{1}{2} \left(1-\left( \frac{M(u_0)}{M(Q)} \right)^{\frac{2}{N}}\right) \|(\nabla-iA) u(t)\|^2_{L^2} + \int_{\R^N} V_\gamma(x) |f(x)|^2 dx.
		\end{align}
		As $M(u_0)<M(Q)$, we infer \eqref{blow-alte-gamma-appl}. The proof is complete.
	\end{proof}

	We are now able to prove the existence and stability of prescribed mass standing waves for \eqref{RNLS} given in Theorem \ref{theo-mass-sub}.
	 
	 \begin{proof}[Proof of Theorem \ref{theo-mass-sub}]
	 	The proof is divided into several steps.
	 	
	 	{\bf Step 1.} We first show that $I_\gamma(c)>-\infty$ for all $c>0$. Let $c>0$ and $f \in \Sigma_\gamma$ satisfy $M(f) =c$. Arguing as in \eqref{est-mass-sub}, we have for any $\vareps>0$,
	 	\begin{align*}
	 	\frac{2}{p+1}\|f\|^{p+1}_{L^{p+1}} &\leq C \|(\nabla-iA) f\|^{\frac{N(p-1)}{2}}_{L^2} \|f\|^{\frac{4-(N-2)(p-1)}{2}}_{L^2} \\
	 	&\leq \varepsilon \|(\nabla-iA) f\|^2_{L^2} + C(N,p, \varepsilon, c). 
	 	\end{align*}
	 	It follows that
	 	\begin{align*}
	 	E_\gamma(f) \geq \left(\frac{1}{2}-\varepsilon\right) \|(\nabla-iA) f\|^2_{L^2} +\int_{\R^N} V_\gamma(x)|f(x)|^2 dx - C(N,p, \varepsilon, c).
	 	\end{align*}
	 	By taking $\varepsilon=\frac{1}{4}$, we have
	 	\begin{align} \label{est-E-gamma}
	 	E_\gamma(f) \geq \frac{1}{4} \|(\nabla-iA) f\|^2_{L^2} + \int_{\R^N} V_\gamma(x)|f(x)|^2 dx - C(N,p,c).
	 	\end{align}
	 	Since $V_\gamma\geq 0$, we have $E_\gamma(f) \geq - C(N,p,c)$ for all $f \in \Sigma_\gamma$ satisfying $M(f) =c$. This shows that $I_\Omega(c)$ is well-defined.
	 	
	 	{\bf Step 2.} We will show that there exists a minimizer for $I_\gamma(c)$. To see this, we take $(f_n)_{n\geq 1}$ a minimizing sequence for $I_\gamma(c)$. By \eqref{est-E-gamma}, we have
	 	\[
	 	\frac{1}{4} \|(\nabla-iA) f_n\|^2_{L^2} +\int_{\R^N} V_\gamma(x)|f_n(x)|^2 dx \leq E_\gamma(f_n) + C(N,p,c) \rightarrow I_\gamma(c) + C(N,p,c)
	 	\]
	 	as $n\rightarrow \infty$. We infer that $\sup_{n\geq 1} \|f_n\|_{\Sigma_\gamma} <\infty$. Thanks to Lemma \ref{lem-non-vani}, there exists a subsequence still denoted by $(f_n)_{n\geq 1}$ such that 
	 	\[
	 	\inf_{n\geq 1} \|f_n\|_{L^{p+1}} \geq C>0.
	 	\] 
	 	By Lemma \ref{lem-weak-conv}, there exist $\phi \in \Sigma_\gamma \backslash \{0\}$ and a sequence $(y_n)_{n\geq 1} \subset \R^N$ with $y_n=(y^1_n,y^2_n, 0, \cdots, 0)$ such that up to a subsequence,
	 	\[
	 	g_n(x):=e^{iA(y_n)\cdot x} f_n(x+y_n) \rightharpoonup \phi \text{ weakly in } \Sigma_\gamma.
	 	\]
	 	By the weak convergence in $\Sigma_\gamma$, we have
	 	\[
	 	0<\|\phi\|^2_{L^2} \leq \liminf_{n\rightarrow \infty} \|g_n\|^2_{L^2}= \liminf_{n\rightarrow \infty} \|f_n\|^2_{L^2}=c
	 	\]
	 	and
	 	\begin{align*}
	 	\frac{1}{2} \|(\nabla-iA) \phi\|^2_{L^2} + \int_{\R^N} V_\gamma(x)|\phi(x)|^2 dx &\leq \liminf_{n\rightarrow \infty} \frac{1}{2} \|(\nabla-iA) g_n\|^2_{L^2} + \int_{\R^N} V_\gamma(x)|g_n(x)|^2 dx \\
	 	&= \liminf_{n\rightarrow \infty} \frac{1}{2} \|(\nabla-iA) f_n\|^2_{L^2} + \int_{\R^N} V_\gamma(x)|f_n(x)|^2 dx.
	 	\end{align*}
	 	We claim that $\|\phi\|^2_{L^2}=c$. Assume it is true for the moment. Let us show that $\phi$ is a minimizer for $I_\gamma(c)$. In fact, by the weak convergence in $\Sigma_\gamma$ and $\|\phi\|^2_{L^2} = c= \lim_{n\rightarrow \infty} \|g_n\|^2_{L^2}$, we infer that $g_n \rightarrow \phi$ strongly in $L^2$. Thanks to the magnetic Gagliardo-Nirenberg inequality \eqref{mag-GN-ineq},
	 	we see that $g_n\rightarrow \phi$ strongly in $L^{p+1}$. It follows that
	 	\[
	 	I_\gamma(c) \leq E_\gamma(\phi) \leq \liminf_{n\rightarrow \infty} E_\gamma(g_n) =\liminf_{n\rightarrow \infty} E_\gamma(f_n) = I_\gamma(c).
	 	\]
	 	Therefore $E_\gamma(\phi) = I_\gamma(c)$ or $\phi$ is a minimizer of $I_\gamma(c)$. Moreover, we have $g_n\rightarrow \phi$ strongly in $\Sigma_\gamma$. 
	 	
	 	It remains to prove the claim. Suppose that it is not true, i.e., $0<\|\phi\|^2_{L^2} <c$. By the weak convergence and the Brezis-Lieb's lemma \cite{BL}, we have
	 	\begin{align} \label{BL-est}
	 	\|g_n\|^{p+1}_{L^{p+1}} = \|\phi\|^{p+1}_{L^{p+1}} + \|g_n-\phi\|^{p+1}_{L^{p+1}} + o_n(1), 
	 	\end{align}
	 	where $C_n=o_n(1)$ means $C_n\rightarrow 0$ as $n\rightarrow \infty$. In addition, the weak convergence in $\Sigma_\gamma$ implies that
	 	\begin{align}
	 	\|(\nabla-iA) g_n\|^2_{L^2} &= \|(\nabla-iA) \phi\|^2_{L^2} + \|(\nabla-iA) (g_n-\phi)\|^2_{L^2} + o_n(1), \label{est-1}\\
	 	\int_{\R^N} V_\gamma(x) |g_n(x)|^2 dx &= \int_{\R^N} V_\gamma(x) |\phi(x)|^2 dx + \int_{\R^N} V_\gamma(x) |g_n(x)-\phi(x)|^2 dx + o_n(1). \label{est-2}
	 	\end{align}
	 	In fact, let $h_n:= g_n -\phi$. We see that $h_n \rightharpoonup 0$ weakly in $\Sigma_\gamma$. We compute
	 	\begin{align*}
	 	\|(\nabla-iA)g_n\|^2_{L^2} &= \|(\nabla-iA) (\phi+h_n)\|^2_{L^2} \\
	 	&= \|(\nabla-iA) \phi\|^2_{L^2} + \|(\nabla-iA) h_n\|^2_{L^2} + 2 \rea \int_{\R^N} (\nabla+iA) \overline{\phi} (\nabla-iA) h_n dx.
	 	\end{align*}
	 	Let $\epsilon>0$. Since $C^\infty_0(\R^N)$ is dense in $\Sigma_\gamma(\R^N)$, we take $\varphi \in C^\infty_0(\R^N)$ so that $\|(\nabla+iA)\overline{\phi}-(\nabla-iA)\varphi)\|_{L^2} <\frac{\epsilon}{2M}$, where $M:= \sup_{n\geq 1} \|(\nabla-iA) h_n\|_{L^2}$. Since $h_n \rightharpoonup 0$ weakly in $\Sigma_\gamma(\R^N)$, we see that
	 	\[
	 	\left|\int_{\R^N} (\nabla-iA) \varphi (\nabla-iA) h_n dx\right| \rightarrow 0 \text{ as } n\rightarrow \infty.
	 	\]
	 	Thus there exists $n_0 \in \N$ such that for $n\geq n_0$,
	 	\begin{align*}
	 	\Big|\int_{\R^N} (\nabla+iA) \overline{\phi} &(\nabla-iA) h_n dx\Big| \\
	 	&\leq \left|\int_{\R^N} ((\nabla+iA) \overline{\phi}-(\nabla-iA)\varphi) (\nabla-iA) h_n dx\right| + 	\left|\int_{\R^N} (\nabla-iA) \varphi (\nabla-iA) h_n dx\right| \\
	 	&\leq \|(\nabla+iA)\overline{\phi}-(\nabla-iA)\varphi\|_{L^2} \|(\nabla-iA) h_n\|_{L^2} + \epsilon/2 \\
	 	&<\epsilon.
	 	\end{align*}
	 	This shows \eqref{est-1}. The one for \eqref{est-2} is treated similarly. 
	 	
	 	On the other hand, we have for $\lambda>0$, 
	 	\[
	 	E_\gamma(\lambda \phi) = \lambda^2 E_\gamma(\phi) + \frac{2\lambda^2(1-\lambda^{p-1})}{p+1}\|\phi\|^{p+1}_{L^{p+1}}
	 	\]
	 	or
	 	\begin{align} \label{iden-E-gamma}
	 	E_\gamma(\phi) = \frac{1}{\lambda^2} E_\gamma(\lambda \phi) + \frac{2(\lambda^{p-1}-1)}{p+1} \|\phi\|^{p+1}_{L^{p+1}}.
	 	\end{align}
	 	Applying the above identity to $\lambda_0=\frac{\sqrt{c}}{\|\phi\|_{L^2}} >1$, we have
	 	\[
	 	E_\gamma(\phi) = \frac{\|\phi\|^2_{L^2}}{c} E_\gamma(\lambda_0 \phi) + \frac{2(\lambda_0^{p-1}-1)}{p+1} \|\phi\|^{p+1}_{L^{p+1}} > \frac{\|\phi\|^2_{L^2}}{c} I_\gamma(c)
	 	\] 
	 	as $\|\lambda_0 \phi\|^2_{L^2}=c$ and $\phi \ne 0$. 
	 	
	 	Set $\lambda_n:= \frac{\sqrt{c}}{\|g_n-\phi\|_{L^2}}$. By \eqref{est-2}, we see that $\|g_n-\phi\|^2_{L^2} \rightarrow c-\|\phi\|^2_{L^2}$ as $n\rightarrow \infty$, hence $\lambda_n \rightarrow \frac{\sqrt{c}}{\sqrt{c-\|\phi\|^2_{L^2}}} >1$ as $n\rightarrow \infty$. We infer from \eqref{iden-E-gamma} that
	 	\begin{align*}
	 	\lim_{n\rightarrow \infty} E_\gamma(g_n-\phi) &= \lim_{n\rightarrow \infty} \frac{1}{\lambda_n^2} E_\gamma(\lambda_n (g_n-\phi)) + \frac{2(\lambda_n^{p-1}-1)}{p+1} \|g_n-\phi\|^{p+1}_{L^{p+1}} \\
	 	&\geq \frac{c-\|\phi\|^2_{L^2}}{c} I_\gamma(c).
	 	\end{align*}
	 	On the other hand, by \eqref{BL-est}, \eqref{est-1}, and \eqref{est-2}, we have
	 	\begin{align*}
	 	I_\gamma(c) = \lim_{n\rightarrow \infty} E_\gamma(f_n) = \lim_{n\rightarrow \infty} E_\gamma(g_n) &=  E_\gamma(\phi) + \lim_{n\rightarrow \infty} E_\gamma(g_n-\phi) \\
	 	&>\frac{\|\phi\|^2_{L^2}}{c} I_\gamma(c) + \frac{c-\|\phi\|^2_{L^2}}{c} I_\gamma(c) = I_\gamma(c)
	 	\end{align*}
	 	which is a contradiction. Thus the claim is now proved.
	 	
	 	{\bf Step 3.} Let us now show that the set of minimizers $\Mcal_\gamma(c)$ is orbitally stable under the flow of \eqref{RNLS}. We follow an argument of \cite{CE}. Assume by contradiction that it is not true. Then there exist $\vareps_0$, $\phi_0 \in \Mcal_\gamma(c)$, and a sequence of initial data $(u_{0,n})_{n \geq 1} \subset \Sigma_\gamma$ such that
	 	\begin{align} \label{sta-prof-1}
	 	\lim_{n\rightarrow \infty} \|u_{0,n} -\phi_0\|_{\Sigma_\gamma} =0
	 	\end{align} 
	 	and a sequence of time $(t_n)_{n\geq 1} \subset \R$ such that 
	 	\begin{align} \label{sta-prof-2}
	 	\inf_{\phi \in \Mcal_\gamma(c)} \inf_{y\in \Theta} \|e^{iA(y)\cdot \cdotb}u_n(t_n, \cdotb +y)- \phi\|_{\Sigma_\gamma} \geq \vareps_0,
	 	\end{align}
	 	where $u_n$ is the solution to \eqref{RNLS} with initial data $\left. u_n\right|_{t=0} = u_{0,n}$ and $\Theta:= \R^2 \times \{0\}_{\R^{N-2}}$. Note that the solutions exist globally in time by Lemma \ref{lem-gwp}.
	 	
	 	Since $\phi_0 \in \Mcal_\gamma(c)$, we have $E_\gamma(\phi_0) = I_\gamma(c)$. From \eqref{sta-prof-1} and the Sobolev embedding, we infer that
	 	\[
	 	\|u_{0,n}\|^2_{L^2} \rightarrow \|\phi_0\|_{L^2}^2=c, \quad E_\gamma(u_{0,n}) \rightarrow E_\gamma(\phi_0)=I_\gamma(c) \text{ as } n\rightarrow \infty.
	 	\]
	 	By the conservation laws of mass and energy, we have
	 	\[
	 	\|u_n(t_n)\|^2_{L^2} \rightarrow c, \quad E_\gamma(u_n(t_n)) \rightarrow I_\gamma(c) \text{ as } n \rightarrow \infty.
	 	\]
	 	In particular, $(u_n(t_n))_{n\geq 1}$ is a minimizing sequence for $I_\gamma(c)$. Arguing as in Step 1, we see that up to a subsequence, there exist $\phi \in \Mcal_\gamma(c)$ and $(y_n)_{n\geq 1}\subset \Theta$ such that 
	 	\[
	 	\|e^{iA(y_n)\cdot \cdotb}u_n(t_n, \cdotb+y_n) - \phi\|_{\Sigma_\gamma} \rightarrow 0 \text{ as } n\rightarrow \infty.
	 	\]
	 	This however contradicts \eqref{sta-prof-2}. The proof is complete.	
	 \end{proof}
	 
	We now prove the existence and stability of prescribed mass standing waves for \eqref{RNLS} in the mass-critical case.
	
	\begin{proof}[Proof of Theorem \ref{theo-mass-cri}]
		The proof is similar to that of Theorem \ref{theo-mass-sub}. Thus we only point out the differences. Let $0<c<M(Q)$ and $f \in \Sigma_\gamma$ satisfy $M(f)= c$. From \eqref{est-mass-cri}, we have
		\begin{align} \label{est-E-gamma-mass-cri}
		E_\gamma(f) \geq \frac{1}{2} \left(1-\left(\frac{c}{M(Q)}\right)^{\frac{2}{N}}\right) \|(\nabla-iA) f\|^2_{L^2} + \int_{\R^N} V_\gamma(x)|f(x)|^2 dx
		\end{align}
		Since $0<c<M(Q)$ and $V_\Omega\geq 0$, we have $E_\gamma(f) \geq 0$, hence $I_\gamma(c)$ is well-defined. Let $(f_n)_{n\geq 1}$ be a minimizing sequence for $I_\gamma(c)$. By \eqref{est-E-gamma-mass-cri}, we see that $(f_n)_{n\geq 1}$ is a bounded sequence in $\Sigma_\gamma$. Thanks to Lemma \ref{lem-non-vani}, the existence of minimizers for $I_\gamma(c)$ and the orbital stability of $\Mcal_\gamma(c)$ follow from the same argument as in the proof of Theorem \ref{theo-mass-sub}. We thus omit the details.
	\end{proof}
	
	We end this section by giving the proof of the non-existence of minimizers for $I_\gamma(c)$ given in Proposition \ref{prop-non-exis}.
	
	\begin{proof}[Proof of Proposition \ref{prop-non-exis}]
		(1) Mass-critical case. Let $\varphi \in C^\infty_0(\R^N)$ be radially symmetric satisfying $\varphi(x)=1$ for $|x| \leq 1$. We define
		\[
		f_\lambda(x):= \lambda^{\frac{N}{2}} A_\lambda  \varphi(x) Q_0(\lambda x), \quad \lambda>0,
		\]
		where $Q_0(x)= \frac{Q(x)}{\|Q\|_{L^2}}$ and $A_\lambda>0$ is such that $\|f_\lambda\|^2_{L^2} =c$ for all $\lambda>0$. By definition, we have
		\begin{align*}
		A_\lambda^{-2} =\frac{1}{c} \int_{\R^N} \varphi^2(\lambda^{-1} x) Q_0(x) dx.
		\end{align*}
		Since $Q_0$ decays exponentially at infinity, we see that for $\lambda>0$ sufficiently large and any $\delta>0$,
		\[
		\left| \int_{\R^N} \left(1-\varphi^2(\lambda^{-1} x)\right) Q_0^2(x) dx \right| \lesssim \int_{|x| \geq \lambda} e^{-C |x|} dx \lesssim \int_{|x| \geq \lambda} |x|^{-N-\delta} dx \lesssim \lambda^{-\delta}.
		\]
		In particular, we have $A^2_\lambda = c + O(\lambda^{-\infty})$ as $\lambda \rightarrow \infty$, where $B_\lambda = O(\lambda^{-\infty})$ means that $|B_\lambda| \leq C \lambda^{-\delta}$ for any $\delta>0$ with some constant $C>0$ independent of $\lambda$. Next we have
		\begin{align*}
		\|\nabla f_\lambda\|^2_{L^2} = A_\lambda^2 \Big( \int_{\R^N} |\nabla \varphi(\lambda^{-1} x)|^2 Q_0^2(x) dx &+ \lambda^2 \int_{\R^N} \varphi^2(\lambda^{-1} x) |\nabla Q_0(x)|^2dx \\
		& + 2 \lambda \rea \int_{\R^N} \varphi(\lambda^{-1} x) Q_0(x) \nabla \varphi(\lambda^{-1} x) \cdot \nabla Q_0(x) dx \Big).
		\end{align*}
		As $|\nabla Q_0|$ also decays exponentially at infinity and $A_\lambda^2 = c + O(\lambda^{-\infty})$ as $\lambda \rightarrow \infty$, we infer that
		\[
		\|\nabla f_\lambda\|^2_{L^2} = c \lambda^2  \|\nabla Q_0\|^2_{L^2} + O(\lambda^{-\infty})
		\]
		as $\lambda \rightarrow \infty$. We also have
		\[
		\|f_\lambda\|^{2+\frac{4}{N}}_{L^{2+\frac{4}{N}}} =  c^{1+\frac{2}{N}} \lambda^2 \|Q_0\|^{2+\frac{4}{N}}_{L^{2+\frac{4}{N}}} + O(\lambda^{-\infty})
		\]
		as $\lambda \rightarrow \infty$. Since $f_\lambda$ is radially symmetric, we have $L_\gamma(f_\lambda)=0$. On the other hand, since $\lambda^N Q_0^2(\lambda x)$ converges weakly to the Dirac delta function at zero when $\lambda \rightarrow \infty$, we infer that
		\[
		\Vc(f_\lambda) = A_\lambda^2 \int_{\R^N} V(x) \varphi^2(x) \lambda^N Q_0^2(\lambda x) dx \rightarrow 0
		\]
		as $\lambda \rightarrow \infty$, where $\Vc(f)$ is as in \eqref{V-f}. It follows that
		\begin{align}
		I_\gamma(c) \leq E_\gamma(f_\lambda) &= \frac{1}{2} \|\nabla f_\lambda\|^2_{L^2} + \Vc(f_\lambda) - \frac{N}{N+2} \|f_\lambda\|^{2+\frac{4}{N}}_{L^{2+\frac{4}{N}}} - L_\gamma(f_\lambda) \nonumber \\
		&= c\lambda^2 \left( \frac{1}{2} \|\nabla Q_0\|^2_{L^2} - \frac{N}{N+2} c^{\frac{2}{N}} \|Q_0\|^{2+\frac{4}{N}}_{L^{2+\frac{4}{N}}} \right) + o_\lambda(1) \nonumber \\
		&= \frac{c}{2}\lambda^2 \|\nabla Q_0\|^2_{L^2} \left(1-\left(\frac{c}{M(Q)}\right)^{\frac{2}{N}} \right) + o_\lambda(1) \label{est-I-gamma-c}
		\end{align}
		as $\lambda \rightarrow \infty$, where $B_\lambda=o_\lambda(1)$ means that $|B_\lambda| \rightarrow 0$ as $\lambda \rightarrow \infty$. Here we have used the fact that
		\[
		\frac{N}{N+2}\|Q_0\|^{2+\frac{4}{N}}_{L^{2+\frac{4}{N}}} = \frac{1}{2 \|Q\|^{\frac{4}{N}}_{L^2} } \|\nabla Q_0\|^2_{L^2}
		\]
		which comes from the following Pohozaev's identity (see e.g., \cite{Cazenave}):
		\[
		\|\nabla Q\|^2_{L^2} = \frac{2N}{N+2} \|Q\|^{2+\frac{4}{N}}_{L^{2+\frac{4}{N}}} = N \|Q\|^2_{L^2}.
		\]		
		In the case $c>M(Q)$, letting $\lambda \rightarrow \infty$ in \eqref{est-I-gamma-c}, we get $I_\gamma(c) =-\infty$, hence there is no minimizer for $I_\gamma(c)$. 
		
		In the case $c=M(Q)$, it follows from \eqref{est-I-gamma-c} that $I_\gamma(M(Q)) \leq 0$. On the other hand, by the magnetic Gagliardo-Nirenberg inequality \eqref{mag-GN-ineq}, we have for any $f \in \Sigma_\gamma$ satisfying $M(f) = c=M(Q)$,
		\begin{align*}
		E_\gamma(f) &\geq \frac{1}{2} \|(\nabla-iA) f\|^2_{L^2} + \int_{\R^N} V_\gamma(x) |f(x)|^2 dx - \frac{1}{2} \left(\frac{c}{M(Q)}\right)^{\frac{2}{N}} \|(\nabla-iA) f\|^2_{L^2} \\
		&= \int_{\R^N} V_\gamma(x) |f(x)|^2 dx \geq 0.
		\end{align*}
		This shows that $I_\gamma(M(Q)) \geq 0$, hence $I_\gamma(M(Q)) =0$. We will show that there is no minimizer for $I_\gamma(M(Q))$. Assume by contradiction that there exists a minimizer for $I_\gamma(M(Q))$, says $\phi$. We have
		\begin{align*}
		0= I_\gamma(M(Q)) &= E_\gamma(\phi) \\
		&= \frac{1}{2} \|(\nabla-iA) \phi\|^2_{L^2} + \int_{\R^N} V_\gamma(x) |\phi(x)|^2 dx - \frac{N}{N+2} \|\phi\|^{2+\frac{4}{N}}_{L^{2+\frac{4}{N}}} \\
		&\geq \int_{\R^N} V_\gamma(x) |\phi(x)|^2 dx \geq 0.
		\end{align*}
		It yields that
		\begin{align*}
		\|(\nabla-iA) \phi\|^2_{L^2} = \frac{2N}{N+2} \|\phi\|^{2+\frac{4}{N}}_{L^{2+\frac{4}{N}}}
		\end{align*}
		or $\phi$ is an optimizer of the magnetic Gagliardo-Nirenberg inequality. This however is a contradiction due to Lemma \ref{lem-magn-prop}.
		
		(2) Mass-supercritical case. Let $f \in C^\infty_0(\R^N)$ be radially symmetric with $M(f)=c$. Denote $f_\lambda(x):= \lambda^{\frac{N}{2}} f(\lambda x)$ with $\lambda>0$. We see that $M(f)=M(f_\lambda)=c$ for all $c>0$. Moreover, we have
		\begin{align*}
		E_\gamma(f_\lambda)&= \frac{1}{2} \|\nabla f_\lambda\|^2_{L^2} + \Vc(f_\lambda) - \frac{2}{p+1}\|f_\lambda\|^{p+1}_{L^{p+1}} - L_\gamma(f_\lambda) \\
		&= \frac{\lambda^2}{2} \|\nabla f\|^2_{L^2} + \lambda^{-2} \Vc(f) - \frac{2\lambda^{\frac{N(p-1)}{2}}}{p+1} \|f\|^{p+1}_{L^{p+1}}.
		\end{align*}
		Here $L_\gamma(f_\lambda)=0$ as $f_\lambda$ is radially symmetric. As $N(p-1)>4$, we have $E_\gamma(f_\lambda) \rightarrow -\infty$ as $\lambda \rightarrow \infty$, hence $I_\gamma(c)=-\infty$. The proof is complete.
	\end{proof}

	\section{Standing waves in the mass-supercritical case}
	\label{S4}
	\setcounter{equation}{0}
	
	In this section, we study the existence, stability, and qualitative properties of prescribed mass standing waves for \eqref{RNLS} in the mass-supercritical case. Throughout this section, we denote
	\begin{align} \label{H-gamma-f}
	 H_\gamma(f):= \|(\nabla-iA)f\|^2_{L^2} + 2 \int_{\R^N} V_\gamma(x)|f(x)|^2 dx.
	\end{align}
	Let us start with the following lemmas. 
	
	\begin{lemma} \label{lem-non-vani-sup}
		Let $N\geq 2$, $1+\frac{4}{N}<p<1+\frac{4}{N-2}$, $V$ be as in \eqref{defi-poten-V} satisfying \eqref{axia-pote}, and $\Omega=\gamma$. Then for $m>0$ fixed, there exists $c_0=c_0(m)>0$ sufficiently small such that for all $0<c<c_0$ and any minimizing sequence $(f_n)_{n\geq 1}$ to $I^m_\gamma(c)$, we have
		\[
		\liminf_{n\rightarrow \infty} \|f_n\|_{L^{p+1}} \geq C>0.
		\]
		Moreover, we have $I^m_\gamma(c) < \omega^0_\gamma c$. 
	\end{lemma}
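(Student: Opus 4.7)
The plan is to first establish the strict inequality $I^m_\gamma(c)<\omega^0_\gamma c$ by exhibiting an explicit test function in $S(c)\cap B_\gamma(m)$, and then deduce the lower bound on the $L^{p+1}$-norm by a contradiction argument that mirrors the proof of Lemma \ref{lem-non-vani}. The order matters because the nonvanishing step will rely on the strict inequality just proved.

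Writing $x=(x_\perp,x_\intercal)$ with $x_\perp=(x_1,x_2)\in\R^2$ and $x_\intercal\in\R^{N-2}$, I would reuse the tensor trial function from Lemma \ref{lem-non-vani}: set $f(x)=g(x_\perp)\,h(x_\intercal)$, where $g(x_\perp)=\sqrt{\gamma/\pi}\,e^{-\gamma|x_\perp|^2/2}$ is the extremizer of \eqref{mag-L2-boun} and $h(x_\intercal)=\sqrt{c}\,\Phi_0(x_\intercal)$, with $\Phi_0$ the ground state from \eqref{defi-Phi-k} associated to $\lambda_0=\tfrac12\sum_{j=3}^N\gamma_j$. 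Then $\|f\|_{L^2}^2=c$, and a short calculation using $\|(\partial_1-iA_1)g\|_{L^2(\R^2)}^2+\|(\partial_2-iA_2)g\|_{L^2(\R^2)}^2=2\gamma$ together with the eigenvalue relation $\tfrac12\|\nabla\Phi_0\|_{L^2(\R^{N-2})}^2+\int V_\gamma|\Phi_0|^2=\lambda_0$ yields the rigid identity $H_\gamma(f)=2(\gamma+\lambda_0)\,c=2\omega^0_\gamma c$ (cf.\ \eqref{omega-gamma}). Consequently $f\in B_\gamma(m)$ as soon as $c\le c_0(m):=m/(2\omega^0_\gamma)$, and for such $c$ one obtains
$$E_\gamma(f)=\omega^0_\gamma c-\tfrac{2}{p+1}\|g\|_{L^{p+1}(\R^2)}^{p+1}\,\|\Phi_0\|_{L^{p+1}(\R^{N-2})}^{p+1}\,c^{(p+1)/2}<\omega^0_\gamma c,$$
so $I^m_\gamma(c)\le E_\gamma(f)<\omega^0_\gamma c$ for every $0<c<c_0$.

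For the $L^{p+1}$ lower bound, I would argue by contradiction in the spirit of Lemma \ref{lem-non-vani}: if some minimizing sequence $(f_n)_n\subset S(c)\cap B_\gamma(m)$ satisfied $\|f_n\|_{L^{p+1}}\to 0$, then writing $E_\gamma(f)=\tfrac12 H_\gamma(f)-\tfrac{2}{p+1}\|f\|_{L^{p+1}}^{p+1}$ and invoking the spectral bound $\tfrac12 H_\gamma(f_n)\ge\omega^0_\gamma\|f_n\|_{L^2}^2=\omega^0_\gamma c$ from Lemma \ref{lem-omega-gamma}, I would obtain $I^m_\gamma(c)=\lim_n E_\gamma(f_n)\ge\omega^0_\gamma c$, contradicting the strict inequality proved above.

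The main obstacle is not computational but conceptual: unlike the unconstrained problem in Lemma \ref{lem-non-vani}, here the trial function $f$ cannot be rescaled without leaving the local constraint $B_\gamma(m)$, and the rigid identity $H_\gamma(f)=2\omega^0_\gamma c$ is precisely what pins down the threshold $c_0(m)\le m/(2\omega^0_\gamma)$. This explains why the conclusion is available only for $c$ sufficiently small relative to $m$, and why one must prove the strict inequality $I^m_\gamma(c)<\omega^0_\gamma c$ before attempting the nonvanishing argument.
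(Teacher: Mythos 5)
Your proposal is correct and follows essentially the same route as the paper: the paper also plugs the tensor test function $f=g\,h$ from Lemma \ref{lem-non-vani} into the constrained problem, notes that $H_\gamma(f)=\bigl(2\gamma+\sum_{j=3}^N\gamma_j\bigr)c=2\omega^0_\gamma c\leq m$ for $c$ small, concludes $I^m_\gamma(c)<\omega^0_\gamma c$ from \eqref{est-I-c}, and rules out vanishing by the same spectral-bound contradiction. Your explicit identification of the threshold $c_0(m)=m/(2\omega^0_\gamma)$ and the remark on the order of the two steps are consistent with (and slightly more explicit than) the paper's argument.
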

	
	\begin{proof}
		The proof is similar to that of Lemma \ref{lem-non-vani}. Here we note that for $f(x)=g(x_\perp) h(x_\intercal)$ as in the proof of Lemma \ref{lem-non-vani}, we have
		\[
		H_\gamma(f) = \left(2\gamma+\sum_{j=3}^N \gamma_j\right) c \leq m
		\]
		provided $0<c<c_0$ with some $c_0=c_0(m)>0$. The claim $I^m_\gamma(c) < \omega^0_\gamma c$ follows directly from the above notice and \eqref{est-I-c}.
	\end{proof}
	
	\begin{lemma} \label{lem-key-est}
		Let $N\geq 2$, $1+\frac{4}{N}<p<1+\frac{4}{N-2}$, $V$ be as in \eqref{defi-poten-V} satisfying \eqref{axia-pote}, and $\Omega=\gamma$. Then for $m>0$ fixed, there exists $c_0=c_0(m)>0$ sufficiently small such that for all $0<c<c_0$, we have
		\begin{align}
		S(c) \cap B_\gamma(m) &\ne \emptyset, \label{non-empty} \\
		\inf \left\{E_\gamma(f) \ : \ f \in S(c) \cap B_\gamma(m/4)\right\} &< \inf \left\{ E_\gamma(f) \ : \ f \in S(c) \cap \left(B_\gamma(m) \backslash B_\gamma(m/2)\right) \right\}. \label{est-inf}
		\end{align}
	\end{lemma}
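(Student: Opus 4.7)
The plan is to handle \eqref{non-empty} by exhibiting one explicit test function, and then to derive \eqref{est-inf} by upper-bounding the left-hand side on that same function and lower-bounding the right-hand side via the magnetic Gagliardo-Nirenberg inequality. For the first step I take $\Psi_0(x):=g(x_\perp)\Phi_0(x_\intercal)$ from the proof of Lemma \ref{lem-omega-gamma}, where $g(x_\perp)=\sqrt{\gamma/\pi}\,e^{-\gamma|x_\perp|^2/2}$ and $\Phi_0$ is the first eigenfunction appearing in \eqref{defi-Phi-k}. The computation performed there shows $M(\Psi_0)=1$ and $H_\gamma(\Psi_0)=2\omega^0_\gamma$. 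Rescaling to $f_c:=\sqrt{c}\,\Psi_0$ gives $M(f_c)=c$ and $H_\gamma(f_c)=2\omega^0_\gamma c$, so imposing $c\le m/(8\omega^0_\gamma)$ places $f_c$ inside $S(c)\cap B_\gamma(m/4)\subset S(c)\cap B_\gamma(m)$, which proves \eqref{non-empty}.

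For \eqref{est-inf}, dropping the nonnegative $L^{p+1}$ contribution in $E_\gamma(f_c)$ immediately gives
\[
\inf\{E_\gamma(f):f\in S(c)\cap B_\gamma(m/4)\}\le E_\gamma(f_c)\le \omega^0_\gamma c.
\]
On the other hand, any $f\in S(c)\cap(B_\gamma(m)\setminus B_\gamma(m/2))$ satisfies both $H_\gamma(f)>m/2$ and $\|(\nabla-iA)f\|^2_{L^2}\le H_\gamma(f)\le m$. Feeding the latter bound, together with $\|f\|^2_{L^2}=c$, into the magnetic Gagliardo-Nirenberg inequality \eqref{mag-GN-ineq} with $r=p+1$ yields
\[
\|f\|^{p+1}_{L^{p+1}}\le C\,m^{N(p-1)/4}\,c^{(4-(N-2)(p-1))/4}
\]
for a constant $C=C(N,p)$, so that
\[
E_\gamma(f)=\tfrac{1}{2}H_\gamma(f)-\tfrac{2}{p+1}\|f\|^{p+1}_{L^{p+1}}>\tfrac{m}{4}-C'\,m^{N(p-1)/4}\,c^{(4-(N-2)(p-1))/4}.
\]

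Combining the two estimates reduces \eqref{est-inf} to the scalar inequality
\[
\omega^0_\gamma c+C'\,m^{N(p-1)/4}\,c^{(4-(N-2)(p-1))/4}<\tfrac{m}{4}.
\]
The assumption $1+\tfrac{4}{N}<p<1+\tfrac{4}{N-2}$ guarantees that the exponent $(4-(N-2)(p-1))/4$ is strictly positive, so both terms on the left are positive powers of $c$ vanishing as $c\to 0^+$ with $m$ fixed; choosing $c_0=c_0(m)>0$ sufficiently small (and no larger than $m/(8\omega^0_\gamma)$ to retain the first part) yields the strict inequality for all $0<c<c_0$. The main point to watch is that the right-hand bound must hold uniformly in $f$, which is why the magnetic Gagliardo-Nirenberg step is crucial: it converts the uniform ball constraint $H_\gamma(f)\le m$ and the mass constraint $\|f\|^2_{L^2}=c$ into an $f$-independent estimate of the nonlinear term. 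Beyond that, the argument is purely quantitative and invokes no compactness; the delicate input of Lemma \ref{lem-omega-gamma} (the sharp identification $\omega^0_\gamma=\tfrac{1}{2}\sum_j\gamma_j$, attained by the product Gaussian $\Psi_0$) is what makes the construction of the test function possible in the first place.
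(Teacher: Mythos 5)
Your proof is correct and follows essentially the same route as the paper: both rest on the magnetic Gagliardo--Nirenberg inequality to bound the nonlinear term uniformly on $S(c)\cap B_\gamma(m)$ by $C\,m^{N(p-1)/4}c^{(4-(N-2)(p-1))/4}$, so that $E_\gamma(f)$ is trapped near $\tfrac12 H_\gamma(f)$ (hence $>m/4-o(1)$ on the annulus and $O(c)$ on a suitable test function). The only cosmetic differences are that the paper uses a generic radially symmetric $C^\infty_0$ function instead of your product Gaussian, and it additionally verifies that $S(c)\cap\left(B_\gamma(m)\backslash B_\gamma(m/2)\right)$ is non-empty, a point your argument can safely skip since an empty set would make the right-hand infimum $+\infty$.
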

	
	\begin{proof}
		(1) Let $m>0$. We take $\varphi \in C^\infty_0(\R^N)$ be radially symmetric satisfying $\|\varphi\|_{L^2}=1$. Denote $c_0=c_0(m) = \frac{m}{C}$ with $C:= H_\gamma(\varphi)$. Set $f(x)= \sqrt{c} \varphi(x)$. We have $M(f) = c$ and $H_\gamma(f) = c H_\gamma(\varphi)< m$ for all $0<c<c_0$. This shows that $f \in S(c) \cap B_\gamma(m)$.
		
		(2) To show \eqref{est-inf}, we first note that $S(c) \cap \left(B_\gamma(m) \backslash B_\gamma(m/2)\right) \ne \emptyset$ for $c>0$ sufficiently small. In fact, let $\varphi$ be as above. Denote $f_\lambda(x):= \sqrt{c} \lambda^{\frac{N}{2}} \varphi(\lambda x)$ with $\lambda>0$ to be chosen shortly. We have $M(f_\lambda) = c$ for all $\lambda>0$. As $f_\lambda$ is radially symmetric, we have
		\[
		H_\gamma(f_\lambda) = \|\nabla f_\lambda \|^2_{L^2} + 2 \Vc(f_\lambda) =c \left( \lambda^2 \|\nabla \varphi\|^2_{L^2} + 2\lambda^{-2} \Vc(\varphi)\right).
		\]
		We will show that for each $m>0$, by reducing $c_0=c_0(m)>0$ if necessary, there exists $\lambda_0>0$ such that $H_\gamma(f_{\lambda_0}) = \frac{3m}{4}$, hence $f_{\lambda_0} \in S(c) \cap \left(B_\gamma(m) \backslash B_\gamma(m/2)\right)$. The above equality is equivalent to 
		\begin{align} \label{choice-c}
		\lambda^2 \|\nabla \varphi\|^2_{L^2} + 2\lambda^{-2} \Vc(\varphi) = \frac{3m}{4c}. 
		\end{align}
		The left hand side takes values on $\left[2\sqrt{2}\|\nabla \varphi\|_{L^2} \sqrt{\Vc(\varphi)}, \infty\right)$. Thus by taking $c_0=c_0(m)>0$ so that $\frac{3m}{4c_0} \geq 2\sqrt{2}\|\nabla \varphi\|_{L^2} \sqrt{\Vc(\varphi)}$, there exists $\lambda_0>0$ such that \eqref{choice-c} holds. 
		
		To prove \eqref{est-inf}, we observe from the magnetic Gagliardo-Nirenberg inequality that
		\begin{align*}
		E_\gamma(f) &\geq \frac{1}{2} H_\gamma(f) - B \|(\nabla-iA) f\|^{\frac{N(p-1)}{2}}_{L^2} \|f\|^{\frac{4-(N-2)(p-1)}{2}}_{L^2} \\
		&\geq \frac{1}{2} H_\gamma(f) - B \left( H_\gamma(f)\right)^{\frac{N(p-1)}{4}} (M(f))^{\frac{4-(N-2)(p-1)}{4}}, \quad \forall f \in \Sigma_\gamma
		\end{align*}
		with some constant $B>0$. It follows that
		\begin{align} \label{est-inf-proof}
		g_c(H_\gamma(f)) \leq E_\gamma(f) \leq h_c(H_\gamma(f)), \quad \forall f \in S(c),
		\end{align}
		where 
		\[
		g_c(\lambda):= \frac{1}{2} \lambda - B c^{\frac{4-(N-2)(p-1)}{4}} \lambda^{\frac{N(p-1)}{4}}, \quad h_c(\lambda):= \frac{1}{2} \lambda.
		\]
		From \eqref{est-inf-proof}, we see that \eqref{est-inf} is proved provided that there exists $c_0=c_0(m)>0$ sufficiently small such that for each $0<c<c_0$,
		\begin{align} \label{est-gh-c}
		h_c(m/4) < \inf_{\lambda \in (m/2,m)} g_c(\lambda).
		\end{align}
		Observe that
		\[
		g_c(\lambda)=\frac{1}{2}\lambda \left(1-Bc^{\frac{4-(N-2)(p-1)}{4}} \lambda^{\frac{N(p-1)-4}{4}}\right) >\frac{1}{3}\lambda
		\]
		for $\lambda \in (0,m)$ and $0<c<c_0$ with $c_0=c_0(m)>0$ sufficiently small. We infer that
		\[
		\inf_{\lambda \in (m/2,m)} g_c(\lambda) \geq \frac{m}{6} > \frac{m}{8} = h_c(m/4)
		\]
		which proves \eqref{est-gh-c}. The proof is complete.
	\end{proof}
	
	\begin{lemma} \label{lem-gwp-mass-sup}
		Let $N\geq 2$, $V$ be as in \eqref{defi-poten-V} satisfying \eqref{axia-pote}, and $\Omega=\gamma$. Let $m>0$ and $u_0 \in \Sigma_\gamma$ be such that
		\[
		H_\gamma(u_0) \leq m.
		\]
		Then there exists $c_0=c_0(m)>0$ sufficiently small such that for all $0<c<c_0$, if $M(u_0)=c$, then the corresponding solution to \eqref{RNLS} exists globally in time, i.e., $T_*=T^*=\infty$.
	\end{lemma}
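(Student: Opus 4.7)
The plan is to combine the conservation of mass and energy with the two-sided estimate \eqref{est-inf-proof} from the proof of Lemma \ref{lem-key-est}, together with a continuity argument that traps $H_\gamma(u(t))$ in a bounded region, and then invoke the blow-up alternative \eqref{blow-alte-gamma}.

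First, by the local well-posedness theory stated before \eqref{blow-alte-gamma}, there exists a maximal solution $u \in C((-T_*,T^*),\Sigma_\gamma)$ with conserved mass $M(u(t))=c$ and conserved energy $E_\gamma(u(t))=E_\gamma(u_0)$. In particular, for every $t\in(-T_*,T^*)$, the function $u(t)$ lies in $S(c)$, so the inequalities
\[
g_c(H_\gamma(u(t))) \leq E_\gamma(u(t)) \leq h_c(H_\gamma(u(t)))
\]
from \eqref{est-inf-proof} apply, where $g_c(\lambda)=\tfrac{1}{2}\lambda - B c^{\frac{4-(N-2)(p-1)}{4}}\lambda^{\frac{N(p-1)}{4}}$ and $h_c(\lambda)=\tfrac{1}{2}\lambda$. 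From the hypothesis $H_\gamma(u_0)\le m$ and conservation of energy we therefore obtain
\[
g_c(H_\gamma(u(t))) \leq E_\gamma(u_0) \leq h_c(H_\gamma(u_0)) \leq \tfrac{m}{2}, \qquad \forall t\in(-T_*,T^*).
\]

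Next I would study the equation $g_c(\lambda)=m/2$. Since $\frac{N(p-1)}{4}>1$ in the mass-supercritical regime, $g_c$ is concave on $(0,\infty)$ with $g_c(0)=0$, achieves a unique maximum at some $\lambda_*^c\to\infty$ as $c\to 0$, and decreases to $-\infty$ thereafter. A direct computation gives
\[
g_c(2m) = m - B c^{\frac{4-(N-2)(p-1)}{4}} (2m)^{\frac{N(p-1)}{4}},
\]
so by choosing $c_0=c_0(m)>0$ so small that $B c_0^{\frac{4-(N-2)(p-1)}{4}}(2m)^{\frac{N(p-1)}{4}}<\tfrac{m}{2}$ (and also small enough that the earlier applications of Lemma \ref{lem-key-est} are valid), we guarantee $g_c(2m)>\tfrac{m}{2}$ for every $0<c<c_0$. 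In particular, the level $\lambda=2m$ lies in the forbidden set $\{g_c>m/2\}$ for all such $c$.

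Now comes the continuity argument, which I expect to be the crux. The map $t\mapsto H_\gamma(u(t))$ is continuous on $(-T_*,T^*)$ because $u\in C((-T_*,T^*),\Sigma_\gamma)$ and both $\|(\nabla-iA)u(t)\|_{L^2}^2$ and $\int V_\gamma|u(t)|^2\,dx$ are continuous functionals of the $\Sigma_\gamma$-topology. Since $H_\gamma(u(0))\le m <2m$ and $H_\gamma(u(t))$ never takes the value $2m$ (else the previous step would give $g_c(H_\gamma(u(t)))>\tfrac{m}{2}$, contradicting the energy bound), the intermediate value theorem forces $H_\gamma(u(t))<2m$ for all $t\in(-T_*,T^*)$. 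Combining this with the conservation of mass yields
\[
\|u(t)\|_{\Sigma_\gamma}^2 = \|(\nabla-iA)u(t)\|_{L^2}^2 + \int_{\R^N} V_\gamma(x)|u(t,x)|^2\,dx + \|u(t)\|_{L^2}^2 \leq 2m + c, \qquad \forall t\in(-T_*,T^*).
\]
The blow-up alternative \eqref{blow-alte-gamma} then rules out finite-time blowup and gives $T_*=T^*=\infty$, completing the proof. The only delicate point is ensuring that the size of $c_0$ can be chosen depending only on $m$ (and the structural constants $N,p,B$), which is transparent from the explicit bound above.
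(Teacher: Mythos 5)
Your argument is correct and follows essentially the same route as the paper: both proofs combine the conservation of mass and energy with the magnetic Gagliardo--Nirenberg inequality to obtain a self-improving bound on $H_\gamma(u(t))$, and then close it by a continuity (bootstrap) argument before invoking the blow-up alternative \eqref{blow-alte-gamma}. The only cosmetic difference is that you trap $H_\gamma(u(t))$ below the fixed threshold $2m$ using the explicit function $g_c$, whereas the paper uses the data-dependent threshold $2a$ with $a=2|E_\gamma(u_0)|+\tfrac12 H_\gamma(u_0)$; both choices work for the same reason.
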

	
	\begin{proof}
		Let $u:(-T_*,T^*)\times \R^N \rightarrow \C$ be the corresponding solution to \eqref{RNLS}, hence to \eqref{RNLS-mag}. By \eqref{omega-gamma-0} and \eqref{omega-gamma}, we have
		\[
		M(u_0) \leq \frac{1}{2 \omega^0_\gamma} H_\gamma(u_0) \leq \frac{m}{2\omega^0_\gamma}.
		\]
		From this and the magnetic Gagliardo-Nirenberg inequality \eqref{mag-GN-ineq}, we have
		\begin{align}
		|E_\gamma(u_0)| &\leq \frac{1}{2} H_\gamma(u_0) + B \|(\nabla-iA)u_0\|^{\frac{N(p-1)}{2}}_{L^2} \|u_0\|^{\frac{4-(N-2)(p-1)}{2}}_{L^2} \nonumber \\
		&\leq \frac{1}{2} H_\gamma(u_0) + B (H_\gamma(u_0))^{\frac{N(p-1)}{4}} (M(u_0))^{\frac{4-(N-2)(p-1)}{4}} \leq C(m) \label{est-E-u0}
		\end{align}
		for some constant $C(m)>0$ depending on $m$. On the other hand, by the conservation of mass and energy, we have for all $t\in(-T_*,T^*)$,
		\begin{align*}
		H_\gamma(u(t)) &= 2 E_\gamma(u(t)) + \frac{4}{p+1} \|u(t)\|^{p+1}_{L^{p+1}} \\
		&\leq 2E_\gamma(u(t)) + B  (H_\gamma(u(t)))^{\frac{N(p-1)}{4}}(M(u(t)))^{\frac{4-(N-2)(p-1)}{4}} \\
		&\leq 2|E(u_0)| + B  (H_\gamma(u(t)))^{\frac{N(p-1)}{4}}(M(u_0))^{\frac{4-(N-2)(p-1)}{4}}.
		\end{align*}
		Here the constant $B$ may change from line to line. In particular, we have
		\begin{align} \label{est-H-gamma}
		H_\gamma(u(t)) \leq a + b(H_\gamma(u(t))^{\frac{N(p-1)}{4}}, \quad \forall t\in (-T_*,T^*),
		\end{align}
		where
		\[
		a:= 2 |E_\gamma(u_0)| + \frac{1}{2} H_\gamma(u_0), \quad b:= B(M(u_0))^{\frac{4-(N-2)(p-1)}{4}}.
		\]
		We next claim that for each $m>0$, there exists $c_0=c_0(m)>0$ sufficiently small such that for all $0<c<c_0$, if $M(u_0) =c$, then
		\[
		H_\gamma(u(t)) \leq 2a \quad \forall t\in(-T_*,T^*).
		\]
		This together with the blow-up alternative \eqref{blow-alte-gamma} imply $T_*=T^*=\infty$.
		
		It remains to prove the claim. Assume by contradiction that it is not true. As $H_\gamma(u_0) \leq 2a$, the continuity of the solution maps ensures the existence of $t_0 \in (-T_*,T^*)$ such that $H_\gamma(u(t_0)) =2a$. Inserting into \eqref{est-H-gamma}, we get
		\[
		2a \leq a + b(2a)^{\frac{N(p-1)}{4}} \Longleftrightarrow b \geq \frac{1}{2^{\frac{N(p-1)}{4}} a^{\frac{N(p-1)-4}{4}}}.
		\] 
		From \eqref{est-E-u0}, we see that $a$ is bounded from above by a constant depending on $m$. Thus $b$ is bounded from below by some constant depending on $m$, i.e., $b\geq b_0(m)$. However, by taking $c>0$ sufficiently small so that $Bc^{\frac{4-(N-2)(p-1)}{4}} <b_0(m)$, we get a contradiction. The proof is complete.
	\end{proof}

	We are now able to give the proof of Theorem \ref{theo-mass-sup}.
	
	\begin{proof}[Proof of Theorem \ref{theo-mass-sup}]
		The proof is divided into several steps.
		
		{\bf Step 1.} From \eqref{non-empty}, we see that $I^m_\gamma(c)$ is well-defined for $c>0$ sufficiently small. Let $(f_n)_{n\geq 1}$ be a minimizing sequence for $I^m_\gamma(c)$. It follows that $(f_n)_{n\geq 1}$ is a bounded sequence in $\Sigma_\gamma$ as $(f_n)_{n\geq 1} \subset S(c) \cap B_\gamma(m)$. By Lemma \ref{lem-non-vani-sup}, there exists a subsequence still denoted by $(f_n)_{n\geq 1}$ such that 
		\[
		\inf_{n\geq 1} \|f_n\|_{L^{p+1}} \geq C>0.
		\]
		By Lemma \ref{lem-weak-conv}, there exist $\phi \in \Sigma_\gamma\backslash \{0\}$ and a sequence $(y_n)_{n\geq 1} \subset \R^N$ satisfying $y_n=(y^1_n,y^2_n, 0,\cdots, 0)$ such that up to a subsequence,
		\[
		g_n(x):= e^{iA(y_n)\cdot x} f_n(x+y_n) \rightharpoonup \phi \text{ weakly in } \Sigma_\gamma.
		\]
		By the weak convergence, we have
		\[
		0<M(\phi)\leq \liminf_{n\rightarrow \infty} M(g_n) = \liminf_{n\rightarrow \infty} M(f_n)=c
		\]
		and
		\[
		H_\gamma(\phi) \leq \liminf_{n\rightarrow \infty} H_\gamma(g_n) = \liminf_{n\rightarrow \infty} H_\gamma(f_n) \leq m.
		\]
		Moreover, by the same argument as in the proof of Theorem \ref{theo-mass-sub}, we prove that $M(\phi)=c$ or $\phi \in S(c) \cap B_\gamma(m)$. We also have $E_\gamma(\phi) = I^m_\gamma(c)$ or $\phi$ is a minimizer for $I^m_\gamma(c)$. In addition, $g_n\rightarrow \phi$ strongly in $\Sigma_\gamma$. 
		
		{\bf Step 2.} We next prove that $\Mcal_\gamma^m(c) \subset B_\gamma(m/2)$. Indeed, let $\phi \in \Mcal^m_\gamma(c)$ and assume by contradiction that $\phi \notin B_\gamma(m/2)$. By \eqref{est-inf}, we have
		\begin{align*}
		I^m_\gamma(c) &\leq \inf \left\{E_\gamma(f) \ : \ f \in S(c) \cap B_\gamma(m/4)\right\} \\
		&< \inf\left\{ E_\gamma(f) \ : \ f \in S(c) \cap \left( B_\gamma(m) \backslash B_\gamma(m/2)\right)\right\} \\
		&\leq E_\gamma(\phi) = I^m_\gamma(c)
		\end{align*}
		which is a contradiction. 
		
		As $\phi$ does not belong to the boundary of $B_\gamma(m)$, there exists a Lagrange multiplier $\omega \in \R$ such that $S'_{\gamma,\omega}(\phi)[\varphi]=0$ for all $\varphi \in C^\infty_0(\R^N)$, where $S_{\gamma, \omega}(f):= E_\gamma(f) + \omega M(f)$. It shows that $\phi$ is a weak solution to 
		\begin{align} \label{equ-phi}
		-\frac{1}{2}(\nabla-iA)^2 \phi + V_\gamma \phi - |\phi|^{p-1}\phi + \omega \phi =0
		\end{align}
		or $\phi$ is a solution to \eqref{ell-equ-intro} in the weak sense. In particular, $u(t,x) = e^{i\omega t} \phi(x)$ is a solution to \eqref{RNLS}. We also have from \eqref{equ-phi} that
		\begin{align*}
		\omega M(\phi) &= - \frac{1}{2} \|(\nabla-iA) \phi\|^2_{L^2} - \int_{\R^N} V_\gamma(x)|\phi(x)|^2 dx + \|\phi\|^{p+1}_{L^{p+1}} \\
		&= - E_\gamma(\phi) + \frac{p-1}{p+1} \|\phi\|^{p+1}_{L^{p+1}}>-E_\gamma(\phi).
		\end{align*}
		This together with Lemma \ref{lem-non-vani-sup} yield
		\begin{align} \label{est-omega-proof-1}
		\omega > \frac{-E_\gamma(\phi)}{M(\phi)} = -\frac{I^m_\gamma(c)}{c} > -\omega^0_\gamma.
		\end{align}
		On the other hand, by the magnetic Gagliardo-Nirenberg inequality as in \eqref{est-E-u0}, we have
		\begin{align*}
		\omega M(\phi) &\leq - \frac{1}{2} H_\gamma(\phi) + B (H_\gamma(\phi))^{\frac{N(p-1)}{4}} (M(\phi))^{\frac{4-(N-2)(p-1)}{4}} \\
		&\leq - \frac{1}{2} H_\gamma(\phi) \left(1- 2B (H_\gamma(\phi))^{\frac{N(p-1)-4}{4}} (M(\phi))^{\frac{4-(N-2)(p-1)}{4}}\right) 
		\end{align*}
		for some constant $B>0$. As $\phi \in S(c) \cap B_\gamma(m/2)$, we get
		\[
		\omega M(\phi) \leq -H_\gamma(\phi) \left(1-B  m^{\frac{N(p-1)-4}{4}} c^{\frac{4-(N-2)(p-1)}{4}} \right),
		\]
		where the constant $B>0$ may change from line to line. Reducing $c>0$ if necessary, it follows from \eqref{omega-gamma-0} that
		\begin{align} \label{est-omega-proof-2}
		\omega \leq -\omega^0_\gamma \left(1-B m^{\frac{N(p-1)-4}{4}} c^{\frac{4-(N-2)(p-1)}{4}}\right).
		\end{align}
		Collecting \eqref{est-omega-proof-1} and \eqref{est-omega-proof-2}, we prove \eqref{est-omega}. 
		
		Let us now prove \eqref{est-sup}. Let $\phi \in \Mcal^m_\gamma(c)$. As $M(\phi) = c$ and $H_\gamma(\phi) \leq m$, we have from the magnetic Gagliardo-Nirenberg inequality and Lemma \ref{lem-non-vani-sup} that
		\begin{align*}
		H_\gamma(\phi) &= 2E_\gamma(\phi) +\frac{4}{p+1}\|\phi\|^{p+1}_{L^{p+1}} \\
		&\leq 2I^m_\gamma(c) + B m^{\frac{N(p-1)}{4}} c^{\frac{4-(N-2)(p-1)}{4}} \\
		&\leq 2\omega^0_\gamma c+ B m^{\frac{N(p-1)}{4}} c^{\frac{4-(N-2)(p-1)}{4}}
		\end{align*}
		for some constant $B>0$ independent of $m$ and $c$. It follows that
		\[
		\|\phi\|^2_{\Sigma_\gamma} \simeq H_\gamma(\phi) + M(\phi) \leq (2\omega^0_\gamma+1) c + B m^{\frac{N(p-1)}{4}} c^{\frac{4-(N-2)(p-1)}{4}}
		\]
		which shows \eqref{est-sup}. 
		
		{\bf Step 3.} We will prove that $\Mcal^m_\gamma(c)$ is orbitally stable under the flow of \eqref{RNLS}. As in the proof of Theorem \ref{theo-mass-sub}, we argue by contradiction. Suppose that $\Mcal^m_\gamma(c)$ is not orbitally stable. By definition, there exist $\varepsilon_0>0, \phi_0 \in \Mcal^m_\gamma(c)$, a sequence $(u_{0,n})_n \subset \Sigma_\gamma$ satisfying
		\begin{align}  \label{orbi-proof-1}
		\lim_{n\rightarrow \infty} \|u_{0,n} - \phi_0\|_{\Sigma_\gamma} = 0,
		\end{align}
		and a sequence of time $(t_n)_{n\geq 1} \subset \R$ such that
		\begin{align} \label{orbi-proof-2}
		\inf_{\phi \in \Mcal^m_\gamma(c)} \inf_{y \in \Theta} \|u_n(t_n) - \phi\|_{\Sigma_\gamma} \geq \varepsilon_0,
		\end{align}
		where $u_n$ is the solution to \eqref{RNLS} with initial data $u_n(0)= u_{0,n}$. Note that the solutions exist globally in time by Lemma \ref{lem-gwp-mass-sup}.
		
		Since $\phi_0 \in \Mcal^m_\gamma(c)$, we have $E_\gamma(\phi_0) = I^m_\gamma(c)$. By \eqref{orbi-proof-1} and Sobolev embedding, we have
		\[
		M(u_{0,n}) \rightarrow M(\phi_0)=c, \quad H_\gamma(u_{0,n}) \rightarrow H_\gamma(\phi_0) \leq m, \quad E_\gamma(u_{0,n}) \rightarrow E_\gamma(\phi_0) = I^m_\Omega(c)
		\]
		as $n\rightarrow \infty$. By conservation laws of mass and energy, we have
		\[
		M(u_n(t_n)) \rightarrow c, \quad E_\gamma(u_n(t_n)) \rightarrow I^m_\gamma(c)
		\]
		as $n\rightarrow \infty$. We next claim that (up to a subsequence) $H_\gamma(u_n(t_n))\leq m$ for all $n\geq 1$. Suppose that there exists $K\geq 1$ such that $H_\gamma(u_n(t_n)) >m$ for every $n\geq K$. By continuity, there exists $t_n^*$ such that $H_\gamma(u_n(t^*_n)) =m$. Since 
		\[
		M(u_n(t^*_n)) \rightarrow c, \quad H_\gamma(u_n(t^*_n))=m, \quad E_\gamma(u_n(t^*_n)) \rightarrow I^m_\gamma(c)
		\]
		as $n\rightarrow \infty$, we see that $u_n(t^*_n)$ is a minimizing sequence for $I^m_\gamma(c)$. By Step 1, there exists $\phi \in S(c)\cap B_\gamma(m)$ such that $u_n(t^*_n) \rightarrow \phi$ strongly in $\Sigma_\gamma$ and $E_\gamma(\phi) = I^m_\gamma(c)$. This is not possible since minimizers for $I^m_\gamma(c)$ does not belong to the boundary of $B_\gamma(m)$. Thus there exists a subsequence $(t_{n_k})_{k\geq 1}$ such that $H_\gamma(u_{n_k}(t_{n_k}))\leq m$ for all $k\geq 1$. This shows that $(u_{n_k}(t_{n_k}))_{k\geq 1}$ is a minimizing sequence for $I^m_\gamma(c)$. Again, by Step 1, there exist $\phi \in \Mcal^m_\gamma(c)$ and a sequence $(y_k)_{k\geq 1} \subset \Theta$ such that
		\[
		\|e^{iA(y_k)\cdot \cdotb}u_{n_k}(t_{n_k}, \cdotb+y_k) - \phi\|_{\Sigma_\gamma} \rightarrow 0
		\]
		as $k\rightarrow \infty$. This contradicts \eqref{orbi-proof-2}, and the proof is complete.	
	\end{proof}
	
	\begin{proof}[Proof of Theorem \ref{theo-mass-sup-prop}]
		Let $\phi \in \Mcal^m_\gamma(c)$. As $\phi$ does not belong to the boundary of $B_\gamma(m)$, we have $\left.E'_\gamma\right|_{S(c)}(\phi)=0$. 
		
		Assume by contradiction that there exists $f \in S(c)$ with $\left.E'_\gamma\right|_{S(c)}=0$ such that $E_\gamma(f) < E_\gamma(\phi)=I^m_\gamma(c)$. Since $\left.E'_\gamma\right|_{S(c)}=0$, there exists a Lagrange multiplier $\omega \in \R$ such that $f$ is a solution to 
		\[
		-\frac{1}{2}(\nabla-iA)^2 f + V_\gamma f - |f|^{p-1}f + \omega f =0.
		\]
		Multiplying both sides of the above equation with $\overline{f}$ and integrating over $\R^N$, we have
		\[
		\frac{1}{2} H_\gamma(f) + \omega M(f) - \|f\|^{p+1}_{L^{p+1}}=0.
		\]
		From this and the fact that $I^m_\gamma(c)< \omega^0_\gamma c$, we have
		\[
		\frac{p-1}{p+1} \left(\frac{1}{2} H_\gamma(f)+\omega M(f)\right) =E_\gamma(f) + \omega M(f) < I^m_\gamma(c) + \omega c < (\omega^0_\gamma+\omega) c.
		\]
		Thus we get
		\[
		H_\gamma(f) < \frac{2(p+1)}{p-1} (\omega^0_\gamma+\omega)c - 2\omega c
		\]
		which, by \eqref{est-omega}, shows that $H_\gamma(f) \rightarrow 0$ as $c\rightarrow 0$. In particulart, for $c>0$ sufficiently small, we have $f \in S(c) \cap B_\gamma(m)$. By the definition of $I^m_\gamma(c)$, we get $I^m_\gamma(c) \leq E_\gamma(f)$ which is a contradiction. The proof is complete.
	\end{proof}
	
	\section*{Acknowledgments}
	This work was supported in part by the European Union’s Horizon 2020 Research and Innovation Programme (Grant agreement CORFRONMAT No. 758620, PI: Nicolas Rougerie). V.D.D. would like to express his deep gratitude to his wife - Uyen Cong for her encouragement and support. The authors would like to thank the reviewers for their helpful comments and suggestions. 
	

	\begin{bibdiv}
		\begin{biblist}
			
			\bib{Aftalion}{book}{
				author={Aftalion, A.},
				title={Vortices in Bose-Einstein condensates},
				series={Progress in Nonlinear Differential Equations and their
					Applications},
				volume={67},
				publisher={Birkh\"{a}user Boston, Inc., Boston, MA},
				date={2006},
				pages={xii+205},
			}
			
			\bib{AEMWC}{article}{
				author={Anderson, M. H.},
				author={Ensher, J. R.},
				author={Matthews, M. R.},
				author={Wieman, C. E.},
				author={Cornell, E. A.},
				title={Observation of Bose-Einstein condensation in a dilute atomic vapor},
				journal={Science},
				volume={269},
				date={1995},
				pages={198},
			}
			
			\bib{AMS}{article}{
				author={Antonelli, P.},
				author={Marahrens, D.},
				author={Sparber, C.},
				title={On the Cauchy problem for nonlinear Schr\"{o}dinger equations with
					rotation},
				journal={Discrete Contin. Dyn. Syst.},
				volume={32},
				date={2012},
				number={3},
				pages={703--715},
				issn={1078-0947},
			}
			
			\bib{ANS}{article}{
				author={Arbunich, J.},
				author={Nenciu, I.},
				author={Sparber, C.},
				title={Stability and instability properties of rotating Bose-Einstein
					condensates},
				journal={Lett. Math. Phys.},
				volume={109},
				date={2019},
				number={6},
				pages={1415--1432},
				issn={0377-9017},
			}
			
			\bib{AH}{article}{
				author={Ardila, A. H.},
				author={Hajaiej, H.},
				title={Global well-posedness, blow-up and stability of standing waves for supercritical NLS with rotation},
				journal={Journal of Dynamics and Differential Equations (in press),  \href{https://doi.org/10.1007/s10884-021-09976-2}{https://doi.org/10.1007/s10884-021-09976-2}},
			}		
			
			\bib{BWM}{article}{
				author={Bao, W.},
				author={Wang, H.},
				author={Markowich, P. A.},
				title={Ground, symmetric and central vortex states in rotating
					Bose-Einstein condensates},
				journal={Commun. Math. Sci.},
				volume={3},
				date={2005},
				number={1},
				pages={57--88},
				issn={1539-6746},
			}
			
			\bib{BC}{article}{
				author={Bao, W.},
				author={Cai, Y.},
				title={Mathematical theory and numerical methods for Bose-Einstein
					condensation},
				journal={Kinet. Relat. Models},
				volume={6},
				date={2013},
				number={1},
				pages={1--135},
				issn={1937-5093},
			}
			
			\bib{BHHZ}{article}{
				author={Basharat, N.},
				author={Hajaiej, H.},
				author={Hu, Y.},
				author={Zheng, S.},
				title={Threshold for blowup and stability for nonlinear Schr\"odinger equation with rotation},
				journal={preprint, available at \href{https://arxiv.org/abs/2002.04722}{arXiv:2002.04722}},
			}
			
			\bib{BBJV}{article}{
				author={Bellazzini, J.},
				author={Boussa\"{\i}d, N.},
				author={Jeanjean, L.},
				author={Visciglia, N.},
				title={Existence and stability of standing waves for supercritical NLS
					with a partial confinement},
				journal={Comm. Math. Phys.},
				volume={353},
				date={2017},
				number={1},
				pages={229--251},
				issn={0010-3616},
			}
			
			\bib{BL}{article}{
				author={Br\'{e}zis, H.},
				author={Lieb, E.},
				title={A relation between pointwise convergence of functions and
					convergence of functionals},
				journal={Proc. Amer. Math. Soc.},
				volume={88},
				date={1983},
				number={3},
				pages={486--490},
				issn={0002-9939},
			}			
			
			\bib{Cai}{article}{
				author={Cai, Y.},
				title={Mathematical theory and numerical methods for the Gross-Piatevskii equations and applications}, 
				journal={Ph.D Thesis, National Universtiy of Singapore},
				year={2011},
			} 
			
			\bib{CDH}{article}{
				author={Carles, R.}, 
				author={Dinh, V. D.}, 
				author={Hajaiej, H.},
				title={On stability of rotational 2D binary Bose-Einstein condensates}, 
				journal={Annales de Facult\'e des Sciences de Toulouse Math\'ematiques (to appear), available at \href{https://arxiv.org/abs/2010.06862}{arXiv:2010.06862}},
			}
			
			\bib{CD}{article}{
				author = {Castin, Y.},
				author = {Dum, R.},
				title = {Bose-Einstein condensates with vortices in rotating traps},
				journal = {Eur. Phys. J. D},
				volume = {7},
				pages = {399--412},
				year = {1999},
			}
			
			\bib{CE}{article}{
				author={Cazenave, T},
				author={Esteban, M. J.},
				title={On the stability of stationary states for nonlinear Schr\"{o}dinger
					equations with an external magnetic field},
				language={English, with Portuguese summary},
				journal={Mat. Apl. Comput.},
				volume={7},
				date={1988},
				number={3},
				pages={155--168},
			}
		
			\bib{Cazenave}{book}{
				author={Cazenave, T.},
				title={Semilinear Schr\"{o}dinger equations},
				series={Courant Lecture Notes in Mathematics},
				volume={10},
				publisher={New York University, Courant Institute of Mathematical
					Sciences, New York; American Mathematical Society, Providence, RI},
				date={2003},
				pages={xiv+323},
				isbn={0-8218-3399-5},
			}
			
			\bib{DMADDKK}{article}{
				author={Davis, K. B.},
				author={Mewes, M. O.},
				author={Andrews, M. R.},
				author={van Druten, N. J.},
				author={Durfee, D. S.},
				author={Kurn, D. M.},
				author={Ketterle, W.},
				title={Bose-Einstein Condensation in a Gas of Sodium Atoms},
				journal={Phys. Rev. Lett.},
				volume={75},
				date={1995},
				pages={3369},
			}
			
			\bib{EL}{article}{
				author={Esteban, M. J.},
				author={Lions, P. L.},
				title={Stationary solutions of nonlinear Schr\"{o}dinger equations with an
					external magnetic field},
				conference={
					title={Partial differential equations and the calculus of variations,
						Vol. I},
				},
				book={
					series={Progr. Nonlinear Differential Equations Appl.},
					volume={1},
					publisher={Birkh\"{a}user Boston, Boston, MA},
				},
				date={1989},
				pages={401--449},
			}
			
			\bib{FCS}{article}{
				author = {Feder, D. L.},
				author = {Clark, W.},
				author = {Schneider, B. I.},
				title = {Nucleation of vortex arrays in rotating anisotropic Bose-Einstein condensates},
				journal = {Phys. Rev. A},
				volume = {61},
				number = {1},
				pages = {011601},
				year = {1999},
			}
			
			\bib{Fetter}{article}{
				author = {Fetter, A. L.},
				title = {Rotating trapped Bose-Einstein condensates},
				journal = {Rev. Mod. Phys.},
				volume = {81},
				issue = {2},
				pages = {647--691},
				numpages = {0},
				year = {2009},
			}
			
			\bib{GP}{article}{
				author = {Garc{\'\i}a-Ripoll, J. J.},
				author = {Perez-Garcia, V. M.},
				title = {Stability of vortices in inhomogeneous Bose condensates subject to rotation: A three-dimensional analysis},
				journal = {Phys. Rev. A},
				volume = {60},
				number = {6},
				pages = {4864},
				year = {1999},
			}
			
			\bib{Ribeiro}{article}{
				author={Gon\c{c}alves Ribeiro, J. M.},
				title={Finite time blow-up for some nonlinear Schr\"{o}dinger equations with
					an external magnetic field},
				journal={Nonlinear Anal.},
				volume={16},
				date={1991},
				number={11},
				pages={941--948},
			}
		
			\bib{GLY}{article}{
				author={Guo, Y.},
				author={Luo, Y.},
				author={Yang, W.},
				title={The nonexistence of vortices for rotating Bose-Einstein
					condensates with attractive interactions},
				journal={Arch. Ration. Mech. Anal.},
				volume={238},
				date={2020},
				number={3},
				pages={1231--1281},
			}
			
			\bib{Guo}{article}{
				author={Guo, Y.},
				title={The nonexistence of vortices for rotating Bose-Einstein condensates in non-radially symmetric traps},
				journal={preprint, available at \href{https://arxiv.org/abs/2010.05592}{arXiv:2010.05592}},
			}
			
			\bib{GLP}{article}{
				author={Guo, Y.},
				author={Luo, Y.},
				author={Peng, S.},
				title={Local uniqueness of ground states for rotating Bose-Einstein condensates with attractive interactions},
				journal={Calc. Var. Partial Differential Equations},
				volume={60},
				number={6},
				date={2021},
				pages={1--27},
			}
			
			\bib{GLP-cri}{article}{
				author={Guo, Y.},
				author={Luo, Y. }, 
				author={Peng, S.},
				title={Existence and asymptotic behavior of ground states for
					rotating Bose-Einstein condensates},
				journal={preprint, available at \href{https://arxiv.org/abs/2106.14369v1}{arXiv:2106.14369}},
			}
		
			\bib{IM}{article}{
				author={Ignat, R.},
				author={Millot, V.},
				title={The critical velocity for vortex existence in a two-dimensional rotating Bose-Einstein condensate},
				journal={J. Funct. Anal.},
				volume={233},
				year={2006},
				pages={260--306},
			}
			
			\bib{HHL-MMAS}{article}{
				author={Hao, C.},
				author={Hsiao, L.},
				author={Li, H. },
				title={Global well posedness for the Gross-Pitaevskii equation with an
					angular momentum rotational term},
				journal={Math. Methods Appl. Sci.},
				volume={31},
				date={2008},
				number={6},
				pages={655--664},
			}
			
			\bib{HHL-JMP}{article}{
				author={Hao, C.},
				author={Hsiao, L.},
				author={Li, H.},
				title={Global well posedness for the Gross-Pitaevskii equation with an
					angular momentum rotational term in three dimensions},
				journal={J. Math. Phys.},
				volume={48},
				date={2007},
				number={10},
				pages={102105, 11},
			}
			
			\bib{LNR}{article}{
				author={Lewin, M.},
				author={Nam, P. T.},
				author={Rougerie, N.},
				title={Blow-up profile of rotating 2D focusing Bose gases},
				journal={ In: Cadamuro D., Duell M., Dybalski W., Simonella S. (eds) Macroscopic Limits of Quantum Systems, Springer Proceedings in Mathematics \& Statistics},
				volume={270},
				number={1562},
				year={2018},
			}
			
			\bib{LS}{article}{
				author={Lieb, E.},
				author={Seiringer, R.},
				title={Derivation of the Gross-Pitaevskii equation for rotating Bose gases},
				journal={Comm. Math. Phys.},
				volume={264},
				year={2006},
				pages={505--537},
			}
			
			\bib{LL}{book}{
				author={Lieb, E. H.},
				author={Loss, M.},
				title={Analysis},
				series={Graduate Studies in Mathematics},
				volume={14},
				edition={2},
				publisher={American Mathematical Society, Providence, RI},
				date={2001},
				pages={xxii+346},
				isbn={0-8218-2783-9},
			}
			
			
			\bib{LL-JAMI}{article}{
				author={L\"{u}, Z.},
				author={Liu, Z.},
				title={Sharp thresholds of Bose-Einstein condensates with an angular
					momentum rotational term},
				journal={J. Appl. Math. Inform.},
				volume={29},
				year={2011},
				number={3-4},
				pages={901--908},
			}
			
			\bib{LY}{article}{
				author={Luo, X.},
				author={Yang, T.},
				title={Multiplicity, asymptotics and stability of standing waves for nonlinear Schr\"odinger equation with rotation},
				journal={J. Differential Equations},
				volume={304},
				year={2021},
				pages={326--347},
			}
			
			\bib{MCWD}{article}{
				author={Madison, K. W.},
				author={Chevy, F.},
				author={Wohlleben, W.},
				author={Dalibard, J.},
				title={Vortex formation
					in a stirred Bose-Einstein condensate},
				journal={Phys. Rev. Lett.},
				volume={84},
				year={2000},
				pages={2498},
			}
			
			\bib{Seiringer}{article}{
				author={Seiringer, R.},
				title={Gross-Pitaevskii theory of the rotating Bose gas},
				journal={Comm. Math. Phys.},
				volume={229},
				year={2002},
				pages={491-509},
			}
			
			\bib{WH}{article}{
				author={Williams, J. E.},
				author={Hooand, M. J.},
				title={Preparing topological states of a Bose-Einstein condensate},
				journal={Nature},
				volume={401},
				year={1999},
				pages={568},
			}
			
			\bib{Yajima}{article}{
				author={Yajima, K.},
				title={Schr\"{o}dinger evolution equations with magnetic fields},
				journal={J. Analyse Math.},
				volume={56},
				date={1991},
				pages={29--76},
			}
		
		
		\end{biblist}
	\end{bibdiv}
	
\end{document}